\documentclass[12pt,a4paper]{article}
\usepackage{enumerate}
\usepackage{theorem}
\usepackage{graphicx}
\setlength{\textheight}{42\baselineskip}
\setlength{\textwidth}{420pt}%
\setlength{\oddsidemargin}{0em}%
\setlength{\evensidemargin}{0em}
\setlength{\topmargin}{0ex}%
\parindent=0cm
\parskip=5pt
\newtheorem{lemma}{Lemma}
\newtheorem{proposition}[lemma]{Proposition}
\newtheorem{theorem}[lemma]{Theorem}
\newtheorem{corollary}[lemma]{Corollary}
{\theorembodyfont{\upshape}\newtheorem{probl}[lemma]{Problem}
{\theorembodyfont{\upshape}}
{\theorembodyfont{\upshape}\newtheorem{remarke}[lemma]{Remark}}
{\theorembodyfont{\upshape}\newtheorem{notee}[lemma]{Note}}
{\theorembodyfont{\upshape}\newtheorem{exampl}[lemma]{Example}}
{\theorembodyfont{\upshape}}

\newenvironment{example}{\begin{exampl}}{\hfill$\Box$\end{exampl}}

\newenvironment{remark}{\begin{remarke}}{\hfill$\Box$\end{remarke}}
\newcommand{\C}{{\bf C}}

\newcommand{\N}{{\bf N}}

\newcommand{\R}{{\bf R}}

\newcommand{\rme}{{\rm e}}
\newcommand{\rmd}{{\rm d}}


\newcommand{\cC}{{\cal C}}
\newcommand{\cD}{{\cal D}}

\newcommand{\cG}{{\cal G}}
\newcommand{\cH}{{\cal H}}

\newcommand{\cK}{{\cal K}}
\newcommand{\cL}{{\cal L}}

\newcommand{\cR}{{\cal R}}

\newcommand{\tA}{\tilde{A}}
\newcommand{\tB}{\tilde{B}}
\newcommand{\sig}{\sigma}
\newcommand{\alp}{\alpha}
\newcommand{\bet}{\beta}
\newcommand{\gam}{\gamma}

\newcommand{\lam}{\lambda}
\newcommand{\del}{\delta}
\newcommand{\eps}{\varepsilon}


\newcommand{\tr}{{\rm tr}}
\newcommand{\Dom}{{\rm Dom}}

\newcommand{\Spec}{{\rm Spec}}
\newcommand{\Ess}{{\rm Ess}}

\newcommand{\Ker}{{\rm Ker}}
\newcommand{\Ran}{{\rm Ran}}
\newcommand{\rank}{{\rm rank}}

\newcommand{\norm}{\Vert}
\renewcommand{\Re}{{\rm Re}}
\renewcommand{\Im}{{\rm Im}}

\newcommand{\lin}{{\rm lin}}

\newcommand{\dist}{{\rm dist}}
\newcommand{\Schrodinger}{Schr\"odinger }
\newcommand{\implies}{\Rightarrow}
\newcommand{\la}{{\langle}}
\newcommand{\ra}{{\rangle}}
\newcommand{\pr}{\prime}

\newenvironment{eq}{\begin{equation}}{\end{equation}}

\newcommand{\eqref}[1]{(\ref{#1})}
\newcommand{\nat}{\natural }

\newenvironment{proof}{\textbf{Proof}}{\hfill$\Box$}


\newcommand{\mtrx}[1]{\left(\begin{array}{cc}#1 \end{array}\right)}

\newcommand{\str}{\rule{0em}{1.5ex}}
\newcommand{\move}[1]{}
\usepackage{amssymb}
\usepackage{hyperref}
\newcommand{\view}{}
\newcommand{\draft}[1]{#1}
\title{Sectorial perturbations\\
of self-adjoint matrices and operators}
\author{E B Davies}
\date{8 June 2012}
\begin{document}
\view\maketitle

\begin{abstract}
\view This paper considers $N\times N$ matrices of the form $A_\gam =A+ \gam B$, where $A$ is self-adjoint, $\gam\in\C$ and $B$ is a non-self-adjoint perturbation of $A$. We obtain some monodromy-type results relating the spectral behaviour of such matrices in the two asymptotic regimes $|\gam|\to\infty$ and $|\gam|\to 0$ under certain assumptions on $B$. We also explain some properties of the spectrum of $A_\gam$ for intermediate sized $\gam$ by considering the limit $N\to\infty$, concentrating on properties that have no self-adjoint analogue. A substantial number of the results extend to operators on infinite-dimensional Hilbert spaces.
\end{abstract}

AMS subject classifications:

Key words: non-self-adjoint matrix, eigenvalue asymptotics, sectorial operator, rank one perturbation.

\section{Introduction}\label{intro}

Let $A$ be a (possibly unbounded) self-adjoint operator acting in the Hilbert space $\cH$ and let $A_\gam=A+ \gam B$ where $\gam\in\C$ and $B$ is a bounded operator on $\cH$. Many papers have been written about the spectral properties of the self-adjoint operators $A_\gam$ when $B=B^\ast$ and $\gam\in\R$, the main techniques used including variational inequalities and perturbation expansions; see \cite{Liaw,LT,Simon} and many further references there. In this paper we concentrate on more general $B$ and assume that $\gam$ is complex. Our main concern is to describe phenomena that have no self-adjoint analogues, an issue that has been curiously neglected. A recent paper of Rana and Wojtylak, \cite{RW}, is closer to this one, but there is little technical overlap. The interplay between the asymptotic regimes $|\gam|\to 0$ and $|\gam|\to\infty$ is a main focus of interest, but we also explore some spectral phenomena that arise for intermediate values of $\gam$.

As well as being of intrinsic interest, operators of this type are relevant to non-self-adjoint \Schrodinger and wave equations, for which the evolution is contractive as a function of time. In such situations every eigenvalue of $A_\gam$ lies in an appropriate half-plane and the eigenvalue determines the energy and rate of decay of the associated eigenstate of the system. From Section~\ref{rank1case} onwards we study rank one perturbations. As well as providing a range of phenomena that must be included in a more general theory, this is of direct relevance to the study of non-self-adjoint boundary conditions for \Schrodinger operators in one dimension. The relevant perturbations of the \Schrodinger operators are singular, but, if one considers instead the resolvent operators, the perturbations are rank one and bounded.

General considerations from perturbation theory imply that the set
$\cR$ of $(\gam,\lam)\in \C^2$ such that $\lam$ is an isolated
eigenvalue of $A_\gam$ with finite algebraic multiplicity is a
Riemann surface that may have branch points where the multiplicity
of the eigenvalue is greater than $1$; see \cite{Kato}. If $B$ is relatively compact with respect to $A$ then
\[
\Spec(A_\gam)=\Ess(A)\cup \{\lam:(\gam,\lam)\in \cR\}
\]
for every $\gam\in\C$. Our goal in this paper is to understand how the geometrical structure of $\cR$ depends upon some simple generic assumptions about $A$ and $B$.

In much of the paper we assume that $\cH$ has finite dimension $N$. We assume that $A$ is self-adjoint and that $B$ is sectorial. The coupling constant $\gam$ is restricted by the requirement that $\Im(\la A_\gam f,f\ra)\geq 0$ for all $f\in\cH$; this is equivalent to assuming that $iA_\gam$ is dissipative in a standard sense; see \cite[Section~8.3]{LOTS}. Further assumptions on $B$ are made as necessary. In the particular case $B=B^\ast\geq 0$, which motivated our initial interest, we assume that $0<\arg(\gam)<\pi$. Theorem~\ref{trunc2} and Example~\ref{trunc3} show how a substantial part of the spectrum of a large matrix may sometimes be approximated by using a carefully chosen matrix that is much smaller. Section~\ref{dimdep} focuses on spectral properties of $A_\gam$ that are best understood by considering the limit $N\to\infty$.

\section{Sectorial operators}

The truncation of an operator $A$ on $\cH$ to a closed subspace $\cK$ is defined by $A^\nat = PAP\vert_\cK$, where $P$ is the orthogonal projection of $\cH$ onto $\cK$. We will need the following lemma.

\begin{lemma}\label{trunkspec}
If $A=A^\ast$, $aI\leq A\leq bI$ and $A^\nat$ denotes the truncation of $A$ to $\cK$ then $aI^\nat\leq A^\nat\leq bI^\nat$.
\end{lemma}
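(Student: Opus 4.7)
The plan is to unpack the definitions and reduce everything to a one-line computation of the quadratic form of $A^\nat$ on $\cK$.

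First I would observe that since $aI \leq A \leq bI$, the operator $A$ is bounded, so there is no domain issue: $PAP\vert_\cK$ makes sense for every $f\in\cK$. I would also note that $I^\nat$ equals the identity $I_\cK$ on $\cK$: for $f\in\cK$ we have $Pf=f$, so $PIP\vert_\cK f = Pf = f$. This clarifies what the conclusion $aI^\nat\leq A^\nat\leq bI^\nat$ means concretely.

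The key step is the identity
\[
\la A^\nat f,f\ra_\cK = \la PAPf,f\ra_\cH = \la APf,Pf\ra_\cH = \la Af,f\ra_\cH \qquad (f\in\cK),
\]
which uses self-adjointness of $P$ and the fact that $Pf=f$ for $f\in\cK$. Combining this with the hypothesis $a\norm f\norm^2 \leq \la Af,f\ra_\cH \leq b\norm f\norm^2$ (valid for all $f\in\cH$, hence for $f\in\cK$) yields
\[
a\norm f\norm^2 \leq \la A^\nat f,f\ra_\cK \leq b\norm f\norm^2,
\]
for every $f\in\cK$, which is exactly the claim $aI^\nat\leq A^\nat\leq bI^\nat$. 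Since $A^\nat$ is obviously self-adjoint on $\cK$ (as $A=A^\ast$ and $P=P^\ast$), these quadratic-form inequalities transfer immediately to operator inequalities.

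There is really no obstacle here; the only thing to be careful about is not confusing the ambient identity on $\cH$ with the identity on $\cK$, which is why I would make the computation $I^\nat = I_\cK$ explicit at the start. The whole proof is a couple of lines once that notational point is dispensed with.
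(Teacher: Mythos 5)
Your proof is correct and uses essentially the same argument as the paper: the identity $\la A^\nat f,f\ra = \la Af,f\ra$ for $f\in\cK$ (which the paper phrases as an equality of infima over unit vectors in $\cK$) combined with the hypothesis $aI\leq A\leq bI$ restricted to $\cK$. Your pointwise phrasing and the explicit remark that $I^\nat=I_\cK$ are fine clarifications but do not change the substance.
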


\begin{proof}
We use variational methods. The hypotheses imply that
\begin{eqnarray*}
a&\leq& \inf\{\la Af,f\ra:f\in\cH\mbox{ and } \norm f \norm =1\}\\
&\leq & \inf\{\la Af,f\ra:f\in\cK\mbox{ and } \norm f \norm =1\}\\
&=& \inf\{\la A^\nat f,f\ra:f\in\cK\mbox{ and } \norm f \norm =1\}.
\end{eqnarray*}
Therefore $aI^\nat \leq A^\nat$. The other half of the proof is similar.
\end{proof}

A bounded operator $D$ on the Hilbert space $\cH$ is said to be sectorial if there exist `sectorial constants' $\sig_1,\, \sig_2$ such that $-\pi/2< -\sig_1\leq 0 \leq \sig_2<\pi/2$ and
\begin{equation}
\{ \la Df,f\ra:f\in\cH \}\subseteq \{0\}\cup %
\{ z:z\not=0\mbox{ and } -\sig_1\leq \arg(z)\leq \sig_2\}.\label{sectdef}
\end{equation}
The theory of sectorial operators has a long history; see Sections~VI.1.5 and VI.3.1 of \cite{Kato}. The following lemma is adapted from \cite[Theorem VI.3.2]{Kato}, but we include a proof for completeness.

\begin{lemma}\label{sectoriallemma}
If $D$ is a bounded sectorial operator on $\cH$ and $f\in \cH$ then the following are equivalent.
\begin{enumerate}[(i)]
\item $\la Df,f\ra=0$;
\item $\la (D+D^\ast)f,f\ra=0$;
\item $Df=0$;
\item $D^\ast f=0$.
\end{enumerate}
If $\cK=\Ker (D)$ then $\cK$ and $\cK^\perp$ are invariant under $D$ and $D^\ast$. Moreover $D\vert_\cK=D^\ast\vert_\cK=0$. Both  $D\vert_{\cK^\perp}$ and $D^\ast\vert_{\cK^\perp}$ are one-one with ranges that are dense in $\cK^\perp$.  The truncation $D^\nat$ of $D$ to $\cK^\perp$ may be written in the form
\begin{equation}
D^\nat=X^{1/2}(I^\nat+iE) X^{1/2}\label{sectrep1}
\end{equation}
where $X$ is the truncation of $(D+D^\ast)/2$ to $\cK^\perp$, $I^\nat$ is the identity operator on $\cK^\perp$ and  $E$ is a self-adjoint operator on $\cK^\perp$ satisfying
\begin{equation}
-\tan(\sig_1) I^\nat\ \leq E \leq \tan(\sig_2) I^\nat, \label{sectrep2}
\end{equation}
where $\sig_1,\, \sig_2$ are the sectorial constants of $D$.
\end{lemma}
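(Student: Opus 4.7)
The plan is to work from the Cartesian decomposition $D = H + iK$ with $H=(D+D^\ast)/2$ and $K=(D-D^\ast)/(2i)$, both self-adjoint. The sectorial condition (\ref{sectdef}) immediately gives $\la Hf,f\ra=\Re\la Df,f\ra\geq 0$, so $H\geq 0$, together with the form bound $-\tan(\sig_1)H\leq K\leq \tan(\sig_2)H$ expressing that $\la Df,f\ra$ lies in the prescribed sector. With this setup, (i)$\Leftrightarrow$(ii) is easy: (i) kills its real part, and conversely $\la Hf,f\ra=0$ forces $|\la Kf,f\ra|\leq M\la Hf,f\ra=0$, where $M=\max(\tan\sig_1,\tan\sig_2)$.

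The crucial step is (ii)$\Rightarrow$(iii)+(iv). I would first deduce $Hf=0$ from $\la Hf,f\ra=0$ via the Cauchy--Schwarz inequality for the semi-inner product defined by $H$. To pass from there to $Kf=0$, expand the form bound along $f+tg$ with $t\in\R$ and arbitrary $g$: since $Hf=0$ the expansion $\la H(f+tg),f+tg\ra=t^2\la Hg,g\ra$ is $O(t^2)$, whereas $\la K(f+tg),f+tg\ra=2t\,\Re\la Kf,g\ra + t^2\la Kg,g\ra$ carries a linear-in-$t$ term that must be dominated by $Mt^2\la Hg,g\ra$ for all real $t$, forcing $\Re\la Kf,g\ra=0$; replacing $g$ by $ig$ then kills the imaginary part, so $Kf=0$. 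Then $Df=(H+iK)f=0$ and $D^\ast f=(H-iK)f=0$, giving (iii) and (iv), while (iii)$\Rightarrow$(i) and (iv)$\Rightarrow$(i) are trivial.

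The equivalence just proved shows $\cK=\Ker D=\Ker D^\ast=\Ker H$. Invariance of $\cK$ and $\cK^\perp$ under $D$ and $D^\ast$ is then a one-line adjoint computation, as is $D\vert_\cK=D^\ast\vert_\cK=0$. Injectivity of $D\vert_{\cK^\perp}$ and $D^\ast\vert_{\cK^\perp}$ is immediate from $\cK\cap\cK^\perp=\{0\}$, and since $(D\vert_{\cK^\perp})^\ast=D^\ast\vert_{\cK^\perp}$ on $\cK^\perp$ (by invariance), density of their ranges follows from injectivity of the adjoint.

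For the representation (\ref{sectrep1}), the invariance of $\cK^\perp$ under $H$ and $K$ gives $D^\nat=X+iY$, where $X$ and $Y$ are the truncations of $H$ and $K$ to $\cK^\perp$. Here $X$ is strictly positive on $\cK^\perp$: any $f\in\cK^\perp$ with $Xf=0$ would satisfy (ii) and hence lie in $\cK$, contradicting $f\neq 0$. So $X^{1/2}$ is injective with dense range, and I would set $E=X^{-1/2}YX^{-1/2}$, interpreted in the infinite-dimensional case via the bounded sesquilinear form $(u,v)\mapsto\la YX^{-1/2}u,X^{-1/2}v\ra$ on $\Ran(X^{1/2})$ and extended by continuity to all of $\cK^\perp$. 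The form bound on $Y$ conjugates directly to (\ref{sectrep2}), and $D^\nat=X^{1/2}(I^\nat+iE)X^{1/2}$ by construction. The main obstacle is step (ii)$\Rightarrow$(iii) above, where the numerical-range bound must be leveraged to pass from vanishing of a scalar form to vanishing of the operator on $f$; the final factorisation is routine in finite dimensions and needs only the minor extension just sketched in the general case.
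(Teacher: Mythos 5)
Your proof is correct and follows essentially the same route as the paper's: the Cartesian decomposition $D=H+iK$, the reformulation of sectoriality as the form bound $-\tan(\sigma_1)H\le K\le \tan(\sigma_2)H$, and the construction of $E$ as the bounded self-adjoint operator associated with the form $X^{-1/2}YX^{-1/2}$ on the dense range of $X^{1/2}$. The only cosmetic difference is in (ii)$\Rightarrow$(iii): you obtain $Kf=0$ by a first-order expansion along $f+tg$, whereas the paper applies the square-root trick to the positive operator $K+\tan(\sigma_1)H$; the two devices are standard and interchangeable.
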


\begin{proof}

(i) implies (ii). This uses $\la D^\ast f,f\ra=\overline{\la Df,f\ra}$.

(ii) implies (iii) and (iv). We write $D=D_0+iD_1$ where $D_0=D_0^\ast\geq 0$ and $D_1=D_1^\ast$. The sectorial condition is equivalent to $-k_1 D_0\leq D_1\leq k_2 D_0$ where $k_r=\tan(\sig_r)$ for $r=1,\, 2$.
If (i) holds then $\la D_0f,f\ra=0$, so $\norm D_0^{1/2}f\norm^2=\la D_0f,f\ra=0$. This implies that $D_0^{1/2}f=0$, and hence that  $D_0f=0$. Since $0\leq D_1+kD_0\leq 2kD_0$, we also have $(\la D_1+kD_0)f,f\ra=0$, hence $(D_1+kD_0)^{1/2}f=0$ and then $(D_1+kD_0)f=0$. Therefore $D_1f=0$. We conclude that $Df=0$ and $D^\ast f=0$.

(iii) and (iv) separately imply (i). Both are elementary.

The property (iii) implies that $D\vert_\cK=0$. The property (iv) together with the general identity
\[
\overline{\Ran(D)}=(\Ker(D^\ast))^\perp
\]
implies that $\Ran(D)$ is dense in $\cK^\perp$. The corresponding statement for $D^\ast$ has a similar proof.

We have, finally, to prove (\ref{sectrep1}) and (\ref{sectrep2}). Without loss of generality we assume that $\cK=0$ and omit the symbol $\strut^\nat$. The operator $X=D_0$ is then one-one with dense range $\cD$ in $\cH$. The inequalities $-k_1 D_0\leq D_1\leq k_2 D_0$ are equivalent to $-k_1 I\leq E\leq k_2 I$ where $E=D_0^{-1/2}D_1D_0^{-1/2}$ is initially defined as a quadratic form on $\cD$. This yields (\ref{sectrep2}). The bounds on the form $E$ imply that it is associated with a bounded linear operator on $\cH$. We then have $D_1=D_0^{1/2}ED_0^{1/2}$ and hence (\ref{sectrep1}).
\end{proof}

\begin{corollary}\label{sectorialcor}
If $D$ is sectorial and $S$ is bounded then the following are equivalent.
\begin{enumerate}[(i)]
\item $SDS^\ast=0$;
\item $SD=0$;
\item $SD^\ast=0$.
\end{enumerate}
\end{corollary}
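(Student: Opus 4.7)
My plan is to observe that the implications (ii)$\Rightarrow$(i) and (iii)$\Rightarrow$(i) are immediate: the first by associativity, and the second by taking adjoints of $SD^\ast=0$ to obtain $DS^\ast=0$, then multiplying on the left by $S$. So the real content is that (i) implies both (ii) and (iii), and the route goes through Lemma~\ref{sectoriallemma} applied to arbitrary vectors of the form $f=S^\ast g$.

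Specifically, assuming $SDS^\ast=0$, I would take an arbitrary $g\in\cH$ and compute
\[
\la D(S^\ast g),S^\ast g\ra = \la SDS^\ast g,g\ra = 0.
\]
Setting $f=S^\ast g$, this says $\la Df,f\ra=0$, which is condition (i) of Lemma~\ref{sectoriallemma}. That lemma then provides both $Df=0$ and $D^\ast f=0$, i.e.\ $DS^\ast g=0$ and $D^\ast S^\ast g=0$.

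Since $g\in\cH$ was arbitrary, this yields $DS^\ast=0$ and $D^\ast S^\ast=0$ as operator identities. Taking adjoints gives $SD^\ast=0$ and $SD=0$, which are (iii) and (ii) respectively. The only ``obstacle''---which is really no obstacle at all---is noticing that $SDS^\ast=0$ as an operator is equivalent to the scalar statement $\la SDS^\ast g,g\ra=0$ for all $g$ (by the polarization identity applied to the sesquilinear form $(g,h)\mapsto\la SDS^\ast g,h\ra$, or more directly here since we only need the diagonal to feed into the lemma). The whole argument is essentially a one-line reduction to the already-proved sectorial lemma.
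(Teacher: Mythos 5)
Your proof is correct and follows essentially the same route as the paper's: reduce (i) to the statement $\la Df,f\ra=0$ for all $f\in\Ran(S^\ast)$ and invoke Lemma~\ref{sectoriallemma} to get $Df=0$ and $D^\ast f=0$ there, then take adjoints; the converse implications are elementary, exactly as you say. No gaps.
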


\begin{proof}
Assuming (i), $\la SDS^\ast g,g\ra=0$ for all $g\in \cH$. Therefore $\la Df,f\ra=0$ for all $f\in\Ran(S^\ast)$. Lemma~\ref{sectoriallemma} now implies that $Df=0$ for all $f\in \Ran(S^\ast)$. Hence $DS^\ast=0$ and (3) holds. The proof that (i) implies (ii) is similar and the proofs that (ii) and (iii) separately imply (i) are elementary.
\end{proof}

The remainder of this section is of independent interest, but it is not used elsewhere.
Given constants $\sig_1,\, \sig_2$ such that $-\pi/2< -\sig_1\leq 0 \leq \sig_2<\pi/2$, the set of all bounded operators $D$ on the Hilbert space $\cH$ such that (\ref{sectdef}) holds is a proper closed convex cone, which we denote by $\cC_{\sig_1,\sig_2}$. We say that a non-zero operator $C$ lies in $\partial \cC_{\sig_1,\sig_2}$ if $C=A+B$ and $A,\, B \in\cC_{\sig_1,\sig_2}$ imply that there exist non-negative constants $\alp,\, \bet$ such that $A=\alp C$ and $B=\bet C$. The set of all positive multiples of such an operator $C$ is called an extreme ray of $\cC_{\sig_1,\sig_2}$.

\begin{lemma}\label{kernelcone}
Let $A,\, B,\, C\in\cC_{\sig_1,\sig_2}$ and $C=A+B$. Then $\Ker (C)\subseteq \Ker (A)$. In particular $\rank(C)=1$ implies  $A=0$ or $\rank(A)=1$.
\end{lemma}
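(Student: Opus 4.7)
The plan is to extract the kernel inclusion from a convexity argument on the numerical range, and then read off the rank statement using the symmetry $\Ker(A)=\Ker(A^\ast)$ provided by Lemma~\ref{sectoriallemma}.

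First I would take $f\in\Ker(C)$ and observe that
\[
0=\la Cf,f\ra = \la Af,f\ra + \la Bf,f\ra.
\]
Both terms on the right lie in the closed sector $S=\{0\}\cup\{z\neq 0:-\sig_1\leq\arg(z)\leq\sig_2\}$, which has total angular width $\sig_1+\sig_2<\pi$. The key geometric observation is that $S\cap(-S)=\{0\}$: if $z\in S$ is non-zero, then $-z$ has argument differing from $\arg(z)$ by $\pi$, so $-z\notin S$. Consequently, whenever two elements of $S$ sum to $0$, both must vanish. Applying this to $\la Af,f\ra$ and $\la Bf,f\ra$ gives $\la Af,f\ra =0$, and Lemma~\ref{sectoriallemma} then upgrades this to $Af=0$. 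Hence $f\in\Ker(A)$, proving the inclusion $\Ker(C)\subseteq\Ker(A)$.

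For the second assertion, assume $\rank(C)=1$. Then $\overline{\Ran(C^\ast)}=(\Ker C)^\perp$ is one-dimensional, so $\Ker(C)$ has codimension $1$ in $\cH$. Since $\Ker(C)\subseteq\Ker(A)$, the subspace $\Ker(A)$ has codimension $0$ or $1$. In the first case $A=0$. In the second case $(\Ker A)^\perp$ is one-dimensional, and because Lemma~\ref{sectoriallemma} yields $\Ker(A)=\Ker(A^\ast)$, we obtain
\[
\overline{\Ran(A)}=(\Ker(A^\ast))^\perp = (\Ker(A))^\perp,
\]
which is one-dimensional (hence automatically closed). Therefore $\rank(A)=1$.

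The only delicate point is the sector geometry that forces both summands to vanish; this rests on the strict inequality $\sig_1+\sig_2<\pi$ built into the definition of $\cC_{\sig_1,\sig_2}$. Everything else is bookkeeping, using Lemma~\ref{sectoriallemma} to pass from nullity of the quadratic form to nullity of $A$ itself, and the self-adjoint-like identity $\Ker(A)=\Ker(A^\ast)$ for sectorial operators to convert a kernel codimension bound into a rank bound.
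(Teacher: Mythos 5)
Your proof is correct and follows essentially the same route as the paper: both arguments rest on the fact that $\la Af,f\ra$ and $\la Bf,f\ra$ lie in a salient sector and sum to $0$ when $f\in\Ker(C)$ (the paper phrases this via $0\leq A+A^\ast\leq C+C^\ast$, you via $S\cap(-S)=\{0\}$, which are interchangeable), followed by Lemma~\ref{sectoriallemma} to pass from $\la Af,f\ra=0$ to $Af=0$. Your explicit codimension argument for the rank statement, using $\Ker(A)=\Ker(A^\ast)$, correctly fills in a step the paper leaves implicit.
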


\begin{proof}
The assumptions imply that
\[
C+C^\ast=(A+A^\ast)+(B+B^\ast).
\]
and then
\[
0\leq A+A^\ast \leq C+C^\ast.
\]

Therefore $\la (C+C^\ast )f,f\ra=0$ implies $\la (A+A^\ast )f,f\ra=0$. Lemma~\ref{sectoriallemma} now implies that $\Ker (C)\subseteq \Ker (A)$.
\end{proof}

\begin{theorem}\cite{DS}\label{raytheorem}
Let $\cC_{\sig_1,\sig_2}$ be the cone defined above. Then a non-zero operator $A\in \partial \cC_{\sig_1,\sig_2}$ if and only if $Af=\alp \la f,e\ra e$ for all $f\in\cH$, where $e\in \cH$ satisfies $\norm e\norm \not=0$ and $\alp=\rme^{-i\sig_1}$ or $\alp=\rme^{i\sig_2}$.
\end{theorem}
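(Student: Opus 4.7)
The plan is to prove both directions of the equivalence. For sufficiency, suppose $Af = \alp \la f,e\ra e$ with $\alp = \rme^{-i\sig_1}$ (the case $\alp = \rme^{i\sig_2}$ being symmetric). Then $\la Af,f\ra = \alp|\la f,e\ra|^2$ lies on a boundary ray of the sector, so $A\in\cC_{\sig_1,\sig_2}$. Given any decomposition $A=B+C$ with $B,C\in\cC_{\sig_1,\sig_2}$, Lemma~\ref{kernelcone} gives $\{e\}^\perp = \Ker(A)\subseteq \Ker(B)\cap\Ker(C)$, forcing $\rank(B),\rank(C)\leq 1$. Writing $Bf=\la f,e\ra b$, I would next show that sectoriality forces $b\in\C e$: if $b$ had a non-zero component $b_\perp$ perpendicular to $e$, then choosing $f=e+tb_\perp$ with real $t$ would make $\la Bf,f\ra$ trace out a line in $\C$ with direction $\|e\|^2\|b_\perp\|^2\in\R$, and no such line can lie inside a proper sector. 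So $Bf=\bet\la f,e\ra e$ and analogously $Cf=\gam\la f,e\ra e$ with $\bet+\gam=\alp$. Since $\bet,\gam$ lie in the two-dimensional sector and their sum lies on its extreme ray $\R_{\geq 0}\cdot\rme^{-i\sig_1}$, a short planar convexity check (rotating by $\rme^{i\sig_1}$, the imaginary parts of the rotated summands are both non-negative and sum to zero) yields $\bet,\gam\in\R_{\geq 0}\cdot\alp$, so $B$ and $C$ are non-negative multiples of $A$.

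For necessity, let $A\in\partial\cC_{\sig_1,\sig_2}$ be non-zero; I would first establish $\rank(A)=1$. Assume instead $\rank(A)\geq 2$, so $\dim\Ker(A)^\perp\geq 2$. By Lemma~\ref{sectoriallemma}, $\Ker(A)^\perp$ is invariant under $A$ and $A^\nat=X^{1/2}(I^\nat+iE)X^{1/2}$ with $X>0$ and $E$ self-adjoint satisfying \eqref{sectrep2}. Choose a non-trivial orthogonal projection $P$ on $\Ker(A)^\perp$ that commutes with $E$: a spectral projection of $E$ if $E$ has two distinct eigenvalues, otherwise any non-trivial projection (available because $\dim\Ker(A)^\perp\geq 2$). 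Set $B^\nat=X^{1/2}(P+iPEP)X^{1/2}$ and $C^\nat=X^{1/2}((I^\nat-P)+i(I^\nat-P)E(I^\nat-P))X^{1/2}$. Because $P$ commutes with $E$, one has $PEP+(I^\nat-P)E(I^\nat-P)=E$, so $B^\nat+C^\nat=A^\nat$; sandwiching the bounds $-\tan(\sig_1)I^\nat\leq E\leq \tan(\sig_2)I^\nat$ first by $P$ and then by $X^{1/2}$ shows that each summand is sectorial. Extending $B^\nat,C^\nat$ by zero to $\cH$ (this preserves sectoriality since $\Ker(A)\perp\Ker(A)^\perp$) produces $B,C\in\cC_{\sig_1,\sig_2}$ with $B+C=A$. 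If $B=sA$ for some $s\geq 0$, then invertibility of $X^{1/2}$ on $\Ker(A)^\perp$ and equating self-adjoint parts would give $P=sI^\nat$, which is impossible for a non-trivial projection. This contradicts $A\in\partial\cC_{\sig_1,\sig_2}$, so $\rank(A)=1$.

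Once $\rank(A)=1$, write $Af=\la f,u\ra w$; repeating the numerical-range argument of the first paragraph forces $w\in\C u$, so $Af=\mu\la f,e\ra e$ for some $\mu\in\C$ and $e\neq 0$, and sectoriality gives $\arg\mu\in[-\sig_1,\sig_2]$. If $\arg\mu$ were strictly in the interior, I would decompose $\mu=\bet+\gam$ with $\bet=|\bet|\rme^{-i\sig_1}$ and $\gam=|\gam|\rme^{i\sig_2}$ both non-zero; the corresponding rank-one operators then give a decomposition $A=B+C$ whose summands have different arguments, again contradicting extremality. So $\arg\mu\in\{-\sig_1,\sig_2\}$, and replacing $e$ by $|\mu|^{1/2}e$ normalises the scalar to $\alp\in\{\rme^{-i\sig_1},\rme^{i\sig_2}\}$. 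The main obstacle is the rank-one reduction: the splitting projection $P$ must commute with $E$, otherwise the imaginary parts of $B^\nat$ and $C^\nat$ will fail to recombine into $iX^{1/2}EX^{1/2}$, and the scalar-$E$ case has to be handled separately so that a non-trivial $P$ is still available.
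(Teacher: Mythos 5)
Your proof is correct and follows essentially the same route as the paper: a spectral splitting of $E$ in the representation $X^{1/2}(I^\nat+iE)X^{1/2}$ to reduce to the rank-one case, followed by an analysis of which boundary arguments $-\sig_1,\sig_2$ give extreme rays. The only cosmetic differences are that you establish $w\in\C u$ for a rank-one sectorial operator by a numerical-range line argument where the paper invokes $\Ker(A)=\Ker(A^\ast)$ from Lemma~\ref{sectoriallemma}, and you spell out the ``easy calculation'' and the scalar-$E$ case that the paper leaves implicit.
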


\begin{proof}
Given $A\in \cC_{\sig_1,\sig_2}$, let $\cK_1=\Ker(A)$. If $\cK_1^\perp$ has dimension greater than $1$, then by applying the spectral theorem to the self-adjoint operator $E$ in (\ref{sectrep1}), one may write $\cK_1^\perp=\cK_2\oplus \cK_3$ where $\cK_2$ and $\cK_3$ are non-zero orthogonal subspaces that are invariant with respect to $E$. One then has a block decomposition
\[
I^\nat +iE= \mtrx{
I_2+iE_2&0\\ 0& I_3+iE_3}
\]
in an obvious notation. Moreover $I_2+iE_2$ and $I_3+iE_3$ both lie in $\cC_{\sig_1,\sig_2}$ with respect to the relevant Hilbert spaces. It follows that $A=A_2+A_3$ where $A_2$ and $A_3$ have the following block decompositions with respect to  $\cH=\cK_1\oplus\cK_2\oplus\cK_3$.
\begin{eqnarray*}
A_2&=&A_0^{1/2}\left(\begin{array}{ccc}
0&0&0\\
0&I_2+iE_2&0\\
0&0&0
\end{array}\right) A_0^{1/2},\\
A_3&=&A_0^{1/2}\left(\begin{array}{ccc}
0&0&0\\
0&0&0\\
0&0&I_3+iE_3
\end{array}\right) A_0^{1/2}.
\end{eqnarray*}
The factors $A_0^{1/2}$ do not change the sector in which the numerical range lies, so $A_2,\, A_3\in \cC_{\sig_1,\sig_2}$ and $A\notin \partial\cC_{\sig_1,\sig_2}$.

Conversely if $\cK_1^\perp$ is one-dimensional then $A$ has rank $1$ and it is of the form
$A f  =\la  f , e_1\ra e_2$ for some non-zero vectors $e_1,\, e_2$ and all $ f \in\cH$. Since
\begin{eqnarray*}
\Ker (A)&=& \{  f :\la  f ,e_1\ra=0\},\\
\Ker (A^\ast)&=&\{  f :\la  f ,e_2\ra=0\},
\end{eqnarray*}
Lemma~\ref{sectoriallemma} implies that $\Ker (A)=\Ker (A^\ast)$, from which one may deduce that $e_2$ is a multiple of $e_1$. An easy calculation using Lemma~\ref{kernelcone} shows that $A$ is in an extreme ray if and only if the argument of $\alp$ has one of the two stated values.
\end{proof}

\section{Cyclicity}\label{cyclicity}

This section generalizes the notion of cyclic vector to perturbations of an operator that have rank greater than $1$.

\begin{theorem}\label{cyclicequiv}
Let $A$ be a (possibly unbounded) self-adjoint operator acting in the Hilbert space $\cH$ and let $B,\, X$ be two bounded operators on $\cH$. Then the following conditions are equivalent.
\begin{enumerate}[(i)]
\item
$X\rme^{iAt}B=0$ for all $t\in \R$;
\item
$X(zI-A)^{-1}B=0$ for all $z\notin\Spec(A)$;
\item
$X\rme^{i(A+\gam B)t}B=0$ for some (equivalently all) $\gam\in \C$ and all $t\in\R$.

\hspace*{-2.5em}If $A$ is bounded the above conditions are also
equivalent to
\item
$XA^nB=0$ for all $n\geq 0$.
\end{enumerate}
\end{theorem}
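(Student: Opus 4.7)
I would argue in the cycle \mbox{(ii) $\Leftrightarrow$ (i) $\Leftrightarrow$ (iii)}, treating (iv) separately, and taking advantage of the fact that $e^{iAt}$ is a unitary group so everything in sight is bounded.

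For (i) $\Leftrightarrow$ (ii), the plan is to reduce to scalar measures via the spectral theorem for $A$. For any $f,g\in\cH$ let $\mu_{f,g}$ be the complex Borel measure on $\R$ defined by $\mu_{f,g}(S)=\la E(S)Bf,\, X^\ast g\ra$, where $E$ is the spectral resolution of $A$. Then
\[
\la Xe^{iAt}Bf,g\ra=\int_\R e^{i\lam t}\,d\mu_{f,g}(\lam),\qquad
\la X(zI-A)^{-1}Bf,g\ra=\int_\R(z-\lam)^{-1}\,d\mu_{f,g}(\lam).
\]
Condition (i) says every such Fourier transform vanishes, and (ii) says every such Cauchy--Stieltjes transform vanishes off $\R$. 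Both are equivalent to $\mu_{f,g}\equiv 0$ (the first by Fourier uniqueness, the second by Stone's formula / the Stieltjes inversion formula), so (i) $\Leftrightarrow$ (ii).

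For (i) $\Leftrightarrow$ (iii), I would use the Dyson (iterated Duhamel) expansion. Since $B$ is bounded and $\norm e^{iAt}\norm =1$, the series
\[
e^{i(A+\gam B)t}=\sum_{n=0}^\infty(i\gam)^n\int_{0\leq s_1\leq\cdots\leq s_n\leq t}
 e^{iA(t-s_n)}B\,e^{iA(s_n-s_{n-1})}B\cdots B\,e^{iAs_1}\,ds_1\cdots ds_n
\]
converges in operator norm, uniformly on compact $t$--intervals. Multiplying on the left by $X$ and on the right by $B$, every term begins with a factor $X e^{iA(t-s_n)}B$, which vanishes under (i); so (i) $\Rightarrow$ (iii) for every $\gam$. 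The converse with $\gam=0$ is trivial, and to prove the ``some $\Rightarrow$ all'' half of (iii), one expands $A+\gam B=(A+\gam_0B)+(\gam-\gam_0)B$ and uses the analogous Dyson series with the bounded group $e^{i(A+\gam_0B)t}$ in place of $e^{iAt}$; every term then opens with $X e^{i(A+\gam_0B)(t-s_n)}B$, which is zero by the hypothesis at $\gam_0$.

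For the bounded case and (iv), I would use the entire power series $e^{iAt}=\sum_n (it)^nA^n/n!$, which converges in norm. Then $Xe^{iAt}B=\sum_n(it)^n XA^nB/n!$, so (iv) trivially implies (i). Conversely (i) gives a norm-analytic function of $t$ vanishing identically, whose $n$-th derivative at $t=0$ is $i^nXA^nB$, yielding (iv). The main obstacle is really just bookkeeping for the unbounded case: one has to know that the Dyson series converges and that term-by-term factorisation is legitimate, but this is immediate since each $e^{iA\cdot}$ is unitary, $B$ and $X$ are bounded, and the $1/n!$ volume of the simplex dominates.
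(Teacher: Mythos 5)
Your proposal is correct. The Dyson-series treatment of (iii) and the power-series treatment of (iv) coincide with the paper's proof, including the device of expanding $A+\gam B$ around a base point $\gam_0$ at which the hypothesis holds in order to get the ``some implies all'' part of (iii). Where you genuinely diverge is in (i)$\Leftrightarrow$(ii). The paper stays entirely at the semigroup level: it proves (i)$\implies$(ii) by writing $(zI-A)^{-1}$ as a Laplace transform of $\rme^{\pm iAt}$ over a half-line (plus a limiting argument for real $z\notin\Spec(A)$), and (ii)$\implies$(i) by differentiating the resolvent to get $(sI\mp iA)^{-n-1}$ and invoking the exponential formula $\rme^{\pm iAt}=\lim_n\bigl(\tfrac{n}{t}\bigr)^{n}\bigl(\tfrac{n}{t}I\mp iA\bigr)^{-n}$. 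You instead reduce to the scalar complex measures $\mu_{f,g}(S)=\la E(S)Bf,X^\ast g\ra$ and appeal to uniqueness of the Fourier transform and to Stieltjes inversion. Both are complete; yours is arguably more transparent for self-adjoint $A$, while the paper's argument is chosen precisely because, as it remarks, the formulae ``are also valid at the semigroup level'' and hence extend beyond generators admitting a spectral resolution. Two small points of care in your route: Stieltjes inversion for a \emph{complex} measure needs the Cauchy transform to vanish on both half-planes (vanishing on one does not suffice), which condition (ii) does supply since $\Spec(A)\subseteq\R$; and for unbounded $A$ the individual terms of the Dyson series are integrals of merely strongly continuous integrands, so they are defined as strong integrals even though the resulting series is norm-summable by the $|t|^n\norm{\gam B}\norm^n/n!$ bound --- the paper is careful to say ``convergent in the strong operator topology''. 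Neither affects the validity of your argument.
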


\begin{proof}
We use a number of standard theorems and formulae from the theory of one-parameter semigroups; see \cite[Sections~8.2, 11.4]{LOTS}; in finite dimensions many of these can be derived more directly. We first observe that $A=A^\ast$ implies that there is a one-parameter group with generator $iA$; following the usual convention we write this in the form $\rme^{iAt}$, where $t\in\R$. The boundedness of $B$ implies that there is a one parameter group, which we denote by $\rme^{i(A+\gam B)t}$, whose generator is $A+\gam B$.\\
(i)$\implies$(ii). This follows directly from the following formulae, the integrals being convergent in the strong operator topology. If $\Im(z)<0$ then
\[
(zI-A)^{-1}= i\int_0^\infty \rme^{(-izI+iA)t} \, \rmd t.
\]
If $\Im(z)>0$ then
\[
(zI-A)^{-1}= -i\int_{-\infty}^0 \rme^{(-izI+iA)t} \, \rmd t.
\]
If $z\in\R\backslash \Spec(A)$ and $\eps >0$ then
\[
(zI-A)^{-1}= \lim_{\eps\to 0} (zI+i\eps I-A)^{-1}.
\]
(ii)$\implies$(i). This uses the formulae
\begin{eqnarray*}
(sI\mp iA)^{-n-1} &=& \frac{(-1)^n}{ n!}%
\frac{\rmd}{\rmd s^n}(sI\mp iA)^{-1},\\
\rme^{\pm iAt} &=& \lim_{n\to\infty} %
\left( \frac{t}{n}\right)^{-n}%
\left(\frac{n}{t}I\mp iA\right)^n.
\end{eqnarray*}
The formulae are valid for all positive $s$, $t$ and $n$ and the limits may be taken in the strong operator topology. Both formula may be proved by using the spectral theorem, but they are also valid at the semigroup level.\\
(i)$\implies$(iii). Assuming $t>0$, this uses the formula
\begin{eqnarray*}
\rme^{i(A+\gam B)t}&=&\rme^{iAt}+\int_{s=0}^t \rme^{iA(t-s)}i\gam B\rme^{iAs}\, \rmd s\\
&& +\int_{s=0}^t \int_{u=0}^s\rme^{iA(t-s)}i\gam
B\rme^{iA(s-u)}i\gam B\rme^{iAu}\, \rmd u\rmd s+\ldots ,
\end{eqnarray*}
the integrals and series being convergent in the strong operator topology for all $\gam\in\C$. The proof for $t<0$ is similar.\\
(iii)$\implies$(i). If (iii) holds for some $\gam\in\C$ then (i) follows by using the formula
\begin{eqnarray*}
\rme^{iAt}&=&\rme^{i(A+\gam B)t}-\int_{s=0}^t \rme^{i(A+\gam B)(t-s)}i\gam B\rme^{i(A+\gam B)s}\, \rmd s\\
&& +\int_{s=0}^t \int_{u=0}^s\rme^{i(A+\gam B)(t-s)}i\gam
B\rme^{i(A+\gam B)(s-u)}i\gam B\rme^{i(A+\gam B)u}\, \rmd u\rmd s+\ldots
\end{eqnarray*}
(i)$\Leftrightarrow$(iv). These use
\[
(iA)^n=\left. \frac{\rmd^n}{\rmd t^n} \rme^{iAt}\right|_{t=0},\hspace{3em}%
\rme^{iAt}=\sum_{n=0}^\infty \frac{(iAt)^n}{n!},
\]
both limits being in the operator norm.
\end{proof}

In the context of Theorem~\ref{cyclicequiv}, we say that the bounded
operator $B$ is \emph{cyclic} for $A$ if the conditions of the
following corollary hold.

\begin{corollary}\label{cycliccor}
Let $A$ be a possibly unbounded self-adjoint operator acting in the Hilbert space $\cH$ and let $B$ be a bounded operator on $\cH$. Then the following conditions are equivalent.
\begin{enumerate}[(i)]
\item
Whenever any of the equivalent conditions of
Theorem~\ref{cyclicequiv} holds for some bounded operator $X$ on
$\cH$, it follows that $X=0$.
\item
If one defines \[\cL_2=\lin\{ \rme^{iAt}Bv:t\in\R\mbox{ and }
v\in\cH\}\] then $\cL_2$ is dense in $\cH$.
\item
If one defines \[\cL_3=\lin\{ (sI-A)^{-1}Bv:s\notin\Spec(A)\mbox{ and }
v\in\cH\}\] then $\cL_3$ is dense in $\cH$.
\item
Assuming that $A$ is bounded, if one defines \[\cL_4=\lin\{
A^nBv:n=0,1,2,\ldots\mbox{ and } v\in\cH\}\] then $\cL_4$ is dense in $\cH$.
\end{enumerate}
\end{corollary}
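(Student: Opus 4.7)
The plan is to convert each density condition (ii)--(iv) of the corollary into condition (i), which concerns bounded operators $X$, via the following standard duality: for any subset $\cS\subseteq \cH$, the linear span of $\cS$ is dense in $\cH$ if and only if every bounded operator $X$ on $\cH$ satisfying $X s=0$ for all $s\in\cS$ is zero.

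The forward direction of the duality is immediate from continuity and linearity of $X$. For the reverse, if $\clin(\cS)$ is a proper subspace of $\cH$, I pick any non-zero $w$ in its orthogonal complement and put $X g=\la g,w\ra w$; this is a bounded (in fact rank-one) operator, it is non-zero since $X w=\norm w\norm^2 w$, and it vanishes identically on $\cS$.

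Now I observe that condition (i) of Theorem~\ref{cyclicequiv} is precisely the assertion that $X$ vanishes on the spanning set $\{\rme^{iAt}Bv:t\in\R,\,v\in\cH\}$ of $\cL_2$; condition (ii) of that theorem expresses the vanishing of $X$ on the spanning set of $\cL_3$; and condition (iv), when $A$ is bounded, expresses the vanishing of $X$ on the spanning set of $\cL_4$. Applying the duality to $\cS$ equal to each of these spanning sets in turn yields that Corollary~(ii) is equivalent to the statement ``every bounded $X$ satisfying Theorem~\ref{cyclicequiv}(i) is zero''; similarly Corollary~(iii) is equivalent to the statement built on Theorem~\ref{cyclicequiv}(ii), and Corollary~(iv) to the statement built on Theorem~\ref{cyclicequiv}(iv). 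Because Theorem~\ref{cyclicequiv} tells us that, for each fixed bounded $X$, its conditions (i), (ii) and (iv) are equivalent to one another, these three versions of Corollary~(i) coincide, giving the equivalence of all four statements of the corollary.

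No serious obstacle is anticipated; the content is carried by Theorem~\ref{cyclicequiv}, and what remains is the auxiliary duality together with the observation that a single rank-one operator suffices to witness failure of density. This guarantees that the class of $X$ implicitly quantified over in Corollary~(i) is rich enough to detect non-density of $\cL_2$, $\cL_3$, or $\cL_4$.
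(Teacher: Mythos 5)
Your proposal is correct and follows essentially the same route as the paper: both arguments reduce each density condition to Corollary~(i) by noting that the corresponding condition of Theorem~\ref{cyclicequiv} is exactly the vanishing of $X$ on the relevant spanning set, and both witness failure of density with the rank-one operator $Xg=\la g,w\ra w$ built from a vector $w$ orthogonal to the span (the paper phrases this via Hahn--Banach, you via the orthogonal complement, which is the same thing in a Hilbert space).
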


\begin{proof}
(i)$\implies$(ii). If (ii) is false then the Hahn-Banach theorem implies that there exists a non-zero $\phi\in\cH$ such that $\la \phi, v\ra=0$ for all $v\in\cL_2$. If one defines $Xv=\la v,\phi\ra \phi$ for all $v\in\cH$ then one sees that $X\rme^{iAt}Bv=0$ for all $v\in\cH$ but $X\not= 0$, so Theorem~\ref{cyclicequiv}(i) is false.\\
(ii)$\implies$(i). If Theorem~\ref{cyclicequiv}(i) is false for some non-zero $X\in\cL(\cH)$ then $\cL_2\subseteq \Ker(X)\not= \cH$, so (ii) is false.\\
The proofs that (i)$\Leftrightarrow$(iii) and (i)$\Leftrightarrow$(iv) are very similar.
\end{proof}

In the following theorem and elsewhere we use the notations
$\C_+=\{z\in \C:\Im(z)>0\}$ and $\C_-=\{z\in \C:\Im(z)<0\}$. If $B$ is a sectorial operator with sectorial constants $\sig_1$, $\sig_2$ we define
\begin{equation}
S_B=\{0\}\cup \{ \gam\in\C :\gam\not= 0\mbox{ and } \sig_1<\arg(\gam)< \pi-\sig_2\}.\label{SBdef}
\end{equation}
The condition $\gam\in S_B$ implies that $\gam \la Bf,f\ra\in \C_+\cup\{ 0\}$ for all $f\in\cH$.

\begin{remark}
The conditions in Corollary~\ref{cycliccor} only depend on $B$ via the closure of its range $\cR_0=\overline{\{Bf: f\in\cH\}}$. In particular if $A$ and $B$ are both bounded, then $B$ is cyclic for $A$ if and only if the linear span of $\bigcup_{r\geq 0} A^r\cR_0$ is dense in $\cH$. Given $m\in\N$, let $\cR_m$ be the orthogonal complement of $\bigcup_{r= 0}^{m-1} A^r\cR_0$ in $\bigcup_{r= 0}^m A^r\cR_0$. Then $\cR_m$ are orthogonal subspaces and $B$ is cyclic for $A$ if and only if the closure of the sum of $\{ \cR_m\}_{m=0}^\infty$ is dense on $\cH$. One may use these subspaces to represent $A$ as a self-adjoint block tridiagonal matrix. If $B$ is sectorial and $\tB_{r,s}$ is its associated block matrix, then $\tB_{0,0}$ is the truncation of $B$ to $\cR_0$ and all other entries $\tB_{r,s}$ vanish. If $\cH$ is finite-dimensional, one only has a finite number of non-zero subspaces.
\end{remark}

\begin{remark}
The conditions in Corollary~\ref{cycliccor} are close to those used in the block Krylov subspace method of numerical analysis. Case~4 corresponds to the standard version of the method while Case~3 corresponds to the rational version.
\end{remark}

\begin{theorem}\label{SBimpliesC+}
Suppose that $B$ is sectorial and that $\gam\in S_B$. If $B$ is cyclic for $A$ and $\lam$ is an eigenvalue of $A_\gam$ then $\lam\in\C_+$. If $M=\rank (B)<\infty$ then the geometric multiplicity of $\lam$ is at most $M$.
\end{theorem}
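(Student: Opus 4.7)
My plan is to pair the eigenvalue equation $A_\gam f=\lam f$ against $f$ and exploit the precise geometry of $S_B$. Taking $\gam\neq 0$ (for $\gam=0$ the claim would only hold vacuously), this gives
\[
\lam\norm f\norm^2=\la Af,f\ra+\gam\la Bf,f\ra,
\]
with $\la Af,f\ra\in\R$ since $A=A^\ast$. The definition of $S_B$ was tailored so that $\arg(\gam)\in(\sig_1,\pi-\sig_2)$, while, whenever $\la Bf,f\ra\neq 0$, the sectoriality of $B$ forces $\arg(\la Bf,f\ra)\in[-\sig_1,\sig_2]$. Adding the two intervals, the argument of $\gam\la Bf,f\ra$ lies strictly inside $(0,\pi)$, so its imaginary part is strictly positive. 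This immediately yields $\Im(\lam)\geq 0$, with equality only if $\la Bf,f\ra=0$.

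The substantive content is to upgrade this to $\lam\in\C_+$, and this is the one step that actually uses cyclicity. I would argue by contradiction: assume $\Im(\lam)=0$, so $\la Bf,f\ra=0$. Lemma~\ref{sectoriallemma} (equivalence of (i), (iii) and (iv)) then yields $Bf=0$ and $B^\ast f=0$, whence $A_\gam f=Af=\lam f$ with $\lam\in\R$. I then apply the characterisation of cyclicity in Corollary~\ref{cycliccor}(i) via condition (ii) of Theorem~\ref{cyclicequiv}. Set $Xv:=\la v,f\ra f$, a non-zero bounded operator. For any $v\in\cH$ and any $s\notin\Spec(A)$, using $(sI-A)^{-\ast}f=(\bar sI-A)\inv f=(\bar s-\lam)\inv f$,
\[
X(sI-A)\inv Bv=\la Bv,(\bar sI-A)\inv f\ra f=(\bar s-\lam)\inv\la v,B^\ast f\ra f=0.
\]
Cyclicity then forces $X=0$, contradicting $Xf=\norm f\norm^2 f\neq 0$. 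Hence $\Im(\lam)>0$.

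For the geometric multiplicity bound, set $V_\lam:=\Ker(A_\gam-\lam I)$. For any non-zero $f\in V_\lam$, the possibility $Bf=0$ is excluded: it would give $Af=\lam f$ with $\lam\in\C_+$, impossible for a self-adjoint $A$. Thus $B\vert_{V_\lam}$ is injective and
\[
\dim V_\lam=\dim B(V_\lam)\leq\rank(B)=M.
\]

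The main obstacle is the borderline case $\Im(\lam)=0$ in the first part: the sectorial inequality alone only gives the non-strict bound, and without cyclicity real eigenvalues of $A$ supported on $\Ker(B)$ would survive as real eigenvalues of $A_\gam$. The rank-one test operator $X=\la\cdot,f\ra f$, fed into the resolvent form of cyclicity, is the natural device to rule this out; the rest is bookkeeping from Lemma~\ref{sectoriallemma}.
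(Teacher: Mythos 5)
Your proof is correct and follows essentially the same route as the paper: pair the eigenvalue equation against $f$, use the sector geometry to reduce the borderline case to $\la Bf,f\ra=0$, invoke Lemma~\ref{sectoriallemma} to get $Bf=B^\ast f=0$, and then use cyclicity to force $f=0$. The only (immaterial) variations are that you test cyclicity through the resolvent condition of Theorem~\ref{cyclicequiv}(ii) rather than the unitary group condition, and you bound the geometric multiplicity via injectivity of $B$ on the eigenspace rather than via the inclusion $\Ker(A_\gam-\lam I)\subseteq(\lam I-A)^{-1}B\cH$; both give the same bound $M$.
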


\begin{proof}
Suppose that $0\not= f\in\Dom(A_\gam)$ and $Af+\gam Bf=\lam f$. By calculating the imaginary part of
\[
\la Af,f\ra+ \la\gam Bf,f\ra=\lam\la f,f\ra
\]
one deduces that either $\lam\in\C_+$ or $\lam\in\R$ and $\Im(\la\gam Bf,f\ra)=0$. Since $\gam B$ is sectorial it follows that $\la\gam Bf,f\ra=0$. Lemma~\ref{sectoriallemma} now implies that $Bf=B^\ast f=0$. Therefore $Af=\lam f$ and $\rme^{iAt}f=\rme^{i\lam t} f$ for all $t\in\R$. Therefore $B^\ast\rme^{iAt}f=0$ for all $t\in \R$ and
\[
\la f,\rme^{-iAt}Bv\ra=0
\]
for all $t\in\R$ and all $v\in\cH$. Since $B$ is cyclic for $A$ it
follows by Corollary~\ref{cycliccor}(ii) that $f=0$. The contradiction implies that $\lam\in\C_+$.

If $(A+\gam B)f=\lam f$ then $(\lam I-A)f=\gam Bf$. Since $\lam\in \C_+$, $\lam\notin\Spec(A)$ and $f=(\lam I-A)^{-1}\gam Bf\in (\lam I-A)^{-1} B\cH$, which is a linear subspace with dimension at most $M$.
\end{proof}

\section{The main theorems when $N<\infty$}\label{maintheorems}

In this section we suppose that $N=\dim(\cH)<\infty$ and put
$M=\rank (B)$ where $B$ is sectorial. Our goal is to describe how the spectrum of $A_\gam=A+\gam B$ depends on $\gam$, assuming that $\gam\in S_B$ as defined in (\ref{SBdef}), and in particular the relationship between the spectral asymptotics for small and for large $\gam$.

Under the above assumptions it is elementary that $\Im(\la (A+t\gam B)f,f\ra)$ is a monotonically increasing linear function of $t\in (0,\infty)$, as is $\Im(\tr((A+t\gam B)))$. Combining these observations with known variational results for $B=B^\ast\geq 0$ and $\gam>0$, leads to the conjecture that the imaginary part of each eigenvalue of $A+t\gam B$ also increases monotonically as a function of $t$. The following example demonstrates that this is false. It also illustrates the results in Theorem~\ref{maintheorem}. Example~1.5.7 of \cite{LOTS}, which is even simpler, provided one of the motivations for the present study.

\begin{example}\label{rank2example}
Let $A$ be the $5\times 5$ diagonal matrix with eigenvalues
$\lam_r=r$ for $1\leq r\leq 5$, and let $A_\gam=A+\gam B$ where $B$ is
the rank $2$ operator
\[
Bf=\la f ,e_1\ra e_1 +\la f,e_2\ra e_2
\]
for all $f\in \C^5$, where $e_1=(2,2,2,2,2)$ and $e_2=(3,3,-2,-2,-2)$. Figure~\ref{rank2fig} plots the eigenvalues of $A_\gam$ for $\gam=t\rme^{i\theta}$, where $0<t<\infty$ and $\theta=3\pi/8$. The eigenvalues converge to the eigenvalues of $A$ as $t\to 0$. Two of the eigenvalue curves diverge as $t\to\infty$, while the other three converge back to the real axis.\end{example}

\begin{figure}[h!]
\begin{center}
\draft{\resizebox{15cm}{!}{\mbox{\rotatebox{0}%
{\includegraphics{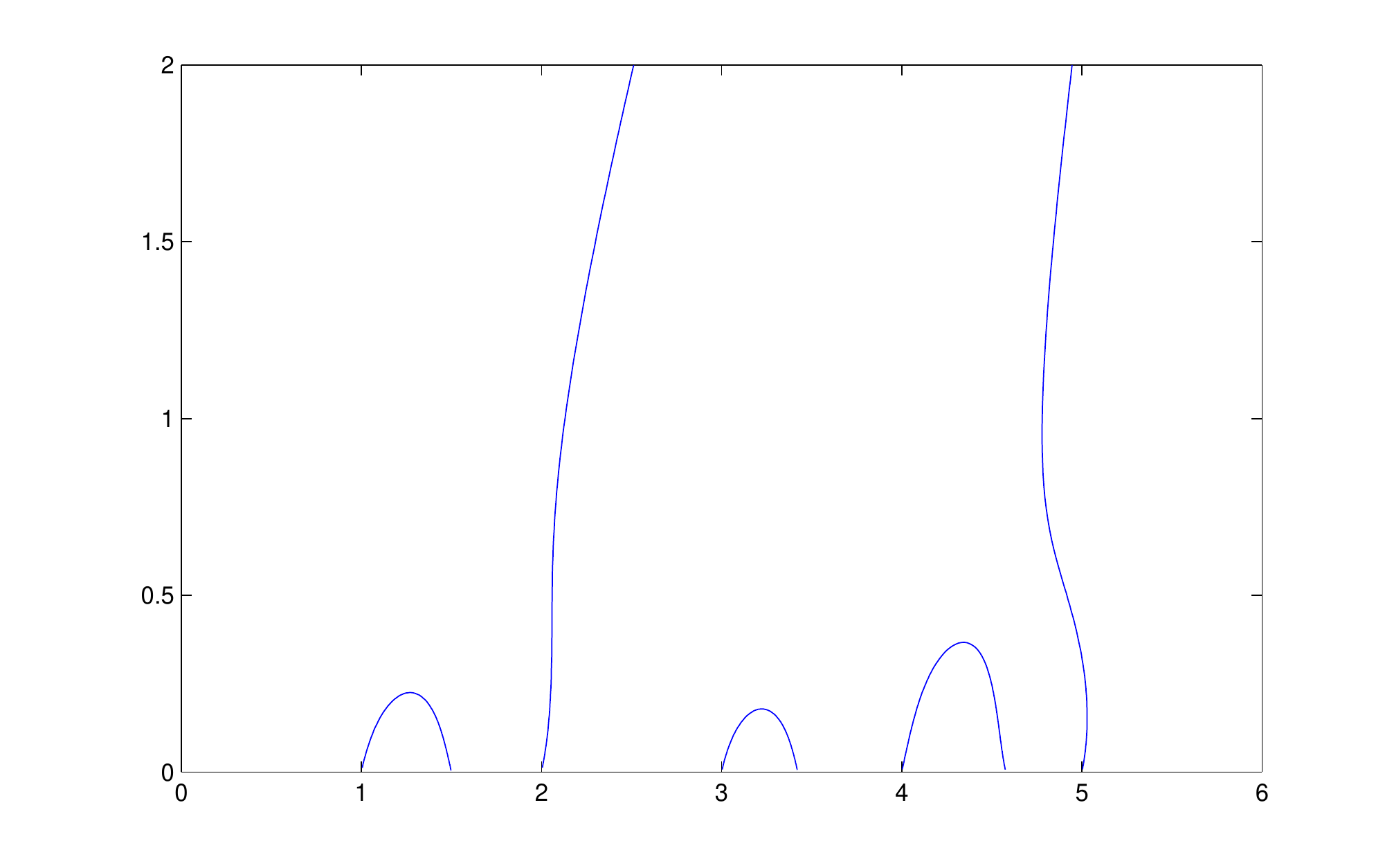}}}}}
\end{center}
\caption{Spectral curves described in %
Example~\ref{rank2example}}\label{rank2fig}
\end{figure}

We shall need the following
conditions. Apart from (H1), each is generic in the sense that it holds for a dense open subset of operators of the relevant type.
\begin{description}
\item[(H1)] $\dim(\cH)<\infty$, $A=A^\ast$ and $B$ is sectorial.
\item[(H2)] The operator $B$ is cyclic for the operator $A$.
\item[(H3)] All of the eigenvalues of $A$ have algebraic multiplicity $1$.
\item[(H4)] All of the non-zero eigenvalues of $B$ have algebraic multiplicity $1$.
\item[(H5)] All of the eigenvalues of the truncation of $A$ to the kernel $\cK$ of $B$ have algebraic multiplicity $1$.
\end{description}

\begin{theorem}\label{contractions}
Let $\gam\in S_B$. If (H1) holds and $Z=i(A+\gam B)$ then $\norm \rme^{Zt}\norm\leq 1$ for all $t\geq 0$. Given (H1), the condition (H2) holds if and only if there are constants $M\geq 1$ and $c>0$ such that
\begin{equation}
\norm \rme^{Zt}\norm \leq M\rme^{-ct}\label{strictcontr}
\end{equation}
for all $t\geq 0$. Given (H1) and (H2), one can put $M=1$ in (\ref{strictcontr}) if and only if $\Ker(B)=\{ 0\}$.
\end{theorem}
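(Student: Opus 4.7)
For $\gam\in S_B$ and any $f\in\cH$, the defining property of $S_B$ gives $\Im(\gam\la Bf,f\ra)\geq 0$, and since $\la Af,f\ra\in\R$,
\[
\la(Z+Z^*)f,f\ra=2\Re\la iA_\gam f,f\ra=-2\Im\la A_\gam f,f\ra\leq 0.
\]
The identity $\frac{d}{dt}\norm \rme^{Zt}f\norm^2=\la(Z+Z^*)\rme^{Zt}f,\rme^{Zt}f\ra$ then shows that $\norm \rme^{Zt}f\norm$ is non-increasing, so $\norm \rme^{Zt}\norm\leq 1$.

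\textbf{Equivalence of (H2) with exponential decay.} If (H2) holds, Theorem~\ref{SBimpliesC+} places every eigenvalue of $A_\gam$ in $\C_+$, so every eigenvalue of $Z$ has real part at most $-c_0$ for some $c_0>0$, and in finite dimensions the Jordan form gives (\ref{strictcontr}) for any $c\in(0,c_0)$ with a suitable $M\geq 1$. Conversely, if (H2) fails, Corollary~\ref{cycliccor}(ii) yields a non-zero subspace $\cN=\cL_2^\perp$. For $\phi\in\cN$ we have $\la\phi,Bv\ra=0$ for every $v\in\cH$, so $B^*\phi=0$, and Lemma~\ref{sectoriallemma} upgrades this to $B\phi=0$; moreover, invariance of $\cL_2$ under $\rme^{iAt}$ makes $\cN$ invariant under $A$. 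Thus $A\vert_\cN$ is self-adjoint and has a real eigenvalue $\lam$ with eigenvector $\psi\in\cN\subseteq\Ker(B)$, whence $A_\gam\psi=\lam\psi$ and $\rme^{Zt}\psi=\rme^{i\lam t}\psi$ has constant norm, contradicting any bound of the form (\ref{strictcontr}).

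\textbf{The dichotomy $M=1$ versus $\Ker(B)\neq\{0\}$.} Assume now $\Ker(B)=\{0\}$. Write $B=D_0+iD_1$ with $D_0=(B+B^*)/2\geq 0$ and $-k_1 D_0\leq D_1\leq k_2 D_0$ for $k_r=\tan\sig_r$, and $\gam=|\gam|\rme^{i\theta}$ with $\theta\in(\sig_1,\pi-\sig_2)$. Then
\[
\Im\la\gam Bf,f\ra=|\gam|\bigl(\sin\theta\,\la D_0 f,f\ra+\cos\theta\,\la D_1 f,f\ra\bigr).
\]
The linear function $s\mapsto\sin\theta+s\cos\theta$ attains its minimum on $[-k_1,k_2]$ at an endpoint, and both endpoint values equal $\sin(\theta-\sig_1)/\cos\sig_1$ and $\sin(\theta+\sig_2)/\cos\sig_2$, which are strictly positive because $0<\theta-\sig_1$ and $\theta+\sig_2<\pi$. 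Hence there is $c_1>0$ with $\Im\la\gam Bf,f\ra\geq c_1|\gam|\la D_0 f,f\ra$. By Lemma~\ref{sectoriallemma}, $\Ker(D_0)\subseteq\Ker(B)=\{0\}$, so $D_0\geq\eps I$ by finite-dimensionality, giving $Z+Z^*\leq-2cI$ with $c=c_1|\gam|\eps>0$; the differentiation identity from the first paragraph then yields $\norm \rme^{Zt}\norm\leq\rme^{-ct}$, i.e.\ $M=1$ is allowed. Conversely, if $0\neq f\in\Ker(B)$ then $B^*f=0$ as well, so $\la(Z+Z^*)f,f\ra=0$ and $\norm \rme^{Zt}f\norm^2$ has zero derivative at $t=0$, which is incompatible with any estimate $\norm \rme^{Zt}f\norm\leq\rme^{-ct}\norm f\norm$ with $c>0$.

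The only non-routine step is the positivity of $c_1$ in the third paragraph; it is precisely the opening condition $\sig_1<\arg\gam<\pi-\sig_2$ built into $S_B$ that forces both endpoint values to be positive, and everything else is a direct assembly of Theorem~\ref{SBimpliesC+}, Corollary~\ref{cycliccor}(ii) and Lemma~\ref{sectoriallemma} around the single quadratic-form identity for $\frac{d}{dt}\norm \rme^{Zt}f\norm^2$.
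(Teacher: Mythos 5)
Your proof is correct, and where it overlaps with the paper's argument it takes the same route: dissipativity of $Z$ gives the contraction property, and Theorem~\ref{SBimpliesC+} plus the Jordan form gives \eqref{strictcontr} under (H2). But the paper's own proof stops there --- it never establishes the converse direction of the second claim, nor the $M=1$ dichotomy --- so the bulk of your write-up is supplying arguments the paper omits. Your converse (non-density of $\cL_2$ produces a $\phi$ with $B^\ast\phi=0$, upgraded to $B\phi=0$ by Lemma~\ref{sectoriallemma}, with $\cL_2^\perp$ invariant under $A$, hence a real eigenvalue surviving in $\Ker(B)$) and your treatment of the $M=1$ case (the uniform lower bound $\Im\la\gam Bf,f\ra\geq c_1|\gam|\la D_0f,f\ra$ with $c_1=\min\{\sin(\theta-\sig_1)/\cos\sig_1,\ \sin(\theta+\sig_2)/\cos\sig_2\}>0$, combined with $\Ker(D_0)\subseteq\Ker(B)=\{0\}$ and finite-dimensionality to get $D_0\geq\eps I$) are both sound. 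One small point worth flagging, which is really a defect of the theorem statement rather than of your proof: $0\in S_B$, and for $\gam=0$ neither \eqref{strictcontr} with $c>0$ nor the $M=1$ bound can hold, so the second and third claims implicitly require $\gam\neq 0$; your parametrization $\gam=|\gam|\rme^{i\theta}$ quietly makes that assumption, which is the right reading.
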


\begin{proof}
It follows directly from (H1) that $Z$ is dissipative for every $\gam\in S_B$ and hence that $\rme^{Zt}$ is a contraction semigroup for $t\geq 0$. If (H2) also holds then every eigenvalue $\lam$ of $Z$ satisfies $\Re(\lam )<0$, and an application of the Jordan form theorem yields (\ref{strictcontr}).
\end{proof}

\begin{example} Suppose that $A$ and $B$ satisfy (H1--5) and that every eigenvalue $\lam$ of $A$ satisfies $\lam>0$. Define the operators $\tA$ and $\tB$ on $\cH\oplus \cH$ by $\tA(f\oplus g)=(Af)\oplus (cAg)$ and $\tB(f\oplus g)=(Af)\oplus (cBg)$, where $c>0$. Then $\tA$ and $\tB$ satisfy (H1--H5) for almost all such $c$, but not for $c=1$. The proof uses Lemma~\ref{trunkspec}.
\end{example}

We assume (H1), (H2) and that $\gam\in S_B$ throughout the section, so that we can use Theorem~\ref{SBimpliesC+}. We make constant use of the polynomial
\begin{eq}
p(\gam,\lam)=\det(A+\gam B-\lam I).\label{pgl}
\end{eq}
The $\gam$-dependence of the spectrum of $A_\gam$ depends on an
analysis of the algebraic surface
\begin{eq}
\cR=\{ (\gam,\lam)\in S_B\times\C_+:p(\gam,\lam)=0\} .
\end{eq}

We will use the following classical facts.

\begin{proposition}\label{polyroots}
If $X$ is an $N\times N$ matrix and $q(\lam)=\det(X-\lam I)$ then
$q$ is a polynomial with degree $N$ and the following are
equivalent.\vspace{-2ex}
\begin{enumerate}[(i)]
\item Every eigenvalue of $X$ has algebraic multiplicity $1$;\vspace{-1.5ex}
\item Every root $\lam$ of $q$ is simple;\vspace{-1.5ex}
\item There are no simultaneous solutions of $q(\lam)=q^\pr(\lam)=0$;\vspace{-1.5ex}
\item The discriminant of $q$ is non-zero. (The discriminant of a polynomial $q$ is a certain multiple of the square of its Vandermonde determinant, and may be written as a homogeneous polynomial with degree $2N-2$ in the coefficients of $q$.)
\end{enumerate}
\end{proposition}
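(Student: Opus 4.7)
The plan is to establish the degree claim first and then prove the chain of equivalences (i)$\Leftrightarrow$(ii)$\Leftrightarrow$(iii)$\Leftrightarrow$(iv). All four statements are classical and the proof is essentially a bookkeeping exercise with the characteristic polynomial.

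For the degree statement, I would expand $\det(X-\lambda I)$ along a Leibniz formula and observe that the diagonal product contributes $(x_{11}-\lambda)\cdots(x_{NN}-\lambda)$, whose leading term in $\lambda$ is $(-1)^N\lambda^N$, while every other permutation contributes a polynomial of strictly smaller degree in $\lambda$. Hence $q$ has degree exactly $N$.

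For (i)$\Leftrightarrow$(ii), the algebraic multiplicity of an eigenvalue $\lambda_0$ of $X$ is defined to be its multiplicity as a root of $q(\lambda)$, so the equivalence is tautological. For (ii)$\Leftrightarrow$(iii), I would use the Taylor expansion of $q$ about a putative root $\lambda_0$: writing $q(\lambda)=\sum_{k\geq 0}\frac{q^{(k)}(\lambda_0)}{k!}(\lambda-\lambda_0)^k$, the root $\lambda_0$ has multiplicity $\geq 2$ precisely when $q(\lambda_0)=q^\pr(\lambda_0)=0$, which gives the equivalence. For (iii)$\Leftrightarrow$(iv), I would invoke the classical identity expressing the discriminant of a polynomial $q$ of degree $N$ with leading coefficient $a_N$ as
\[
\mathrm{disc}(q)=\frac{(-1)^{N(N-1)/2}}{a_N}\mathrm{Res}(q,q^\pr),
\]
where $\mathrm{Res}(q,q^\pr)$ is the Sylvester resultant. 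Since the resultant of two polynomials vanishes if and only if they share a common root in $\C$, the discriminant is nonzero exactly when (iii) holds. The parenthetical remark in the statement (that $\mathrm{disc}(q)$ equals a scalar multiple of the squared Vandermonde $\prod_{i<j}(\lambda_i-\lambda_j)^2$, and is a homogeneous polynomial of degree $2N-2$ in the coefficients of $q$) can be verified directly from the Sylvester formula together with the factorization of $q$ over $\C$.

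There is no real obstacle here: each step is a standard identity from linear algebra and elementary algebra. The only choice worth making is whether to prove (iii)$\Leftrightarrow$(iv) by appealing to the resultant formula (clean but uses outside machinery) or by factoring $q(\lambda)=a_N\prod(\lambda-\lam_j)$ directly and checking that $\mathrm{disc}(q)=a_N^{2N-2}\prod_{i<j}(\lam_i-\lam_j)^2$, which manifestly vanishes iff two roots coincide, iff some $\lam_j$ is a multiple root. The latter route is self-contained and probably preferable in a paper whose focus lies elsewhere.
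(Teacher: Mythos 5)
Your proof is correct, but note that the paper offers no proof of this proposition at all: it is introduced with the words ``We will use the following classical facts'' and left unproved. Your argument -- Leibniz expansion for the degree, the definition of algebraic multiplicity for (i)$\Leftrightarrow$(ii), Taylor expansion at a root for (ii)$\Leftrightarrow$(iii), and the resultant/Vandermonde formula $\mathrm{disc}(q)=a_N^{2N-2}\prod_{i<j}(\lambda_i-\lambda_j)^2$ for (iii)$\Leftrightarrow$(iv) -- is the standard one and correctly justifies everything asserted, including the parenthetical remark that the discriminant is homogeneous of degree $2N-2$ in the coefficients. Your closing suggestion to prove (iii)$\Leftrightarrow$(iv) via the explicit factorization over $\C$ rather than the Sylvester resultant is indeed the more self-contained route and would be the natural choice if the paper had wished to include a proof.
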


Since the zeros of $p(0,\lam)$ all lie on the real axis, the following lemma can often be used to reduce the determination of the zeros of $p(\gam,\lam)$ in $\C_+^2$ to a lower dimensional problem. See Lemma~\ref{BS}. The right-hand side of (\ref{reldeteq}), usually without the $\str^\nat$, is called the relative determinant of $A_\gam$ and $A$.

\begin{lemma}\label{relativedet}
One has
\begin{equation}
\frac{p(\gam,\lam)}{p(0,\lam)}=\det \left( (I+\gam(A-\lam I)^{-1}B)^\nat\right)\label{reldeteq}
\end{equation}
where $\str^\nat$ denotes the truncation of the operator to the range of $B^\ast$.
\end{lemma}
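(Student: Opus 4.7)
The plan is to work with the algebraic identity
\[
A+\gam B-\lam I=(A-\lam I)\bigl(I+\gam (A-\lam I)\inv B\bigr),
\]
which is valid for every $\lam\notin\Spec(A)$. Taking determinants and dividing by $p(0,\lam)=\det(A-\lam I)$ immediately gives
\[
\frac{p(\gam,\lam)}{p(0,\lam)}=\det\bigl(I+\gam (A-\lam I)\inv B\bigr).
\]
So the task reduces to showing that, when one decomposes $\cH=\cM\oplus \cM^\perp$ with $\cM=\Ran(B^\ast)$ (so $\cM^\perp=\Ker(B)$), the full determinant of $T:=I+\gam (A-\lam I)\inv B$ coincides with the determinant of its truncation $T^\nat$ to $\cM$.

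The key observation to make is that $T$ acts as the identity on $\Ker(B)$. Indeed, if $f\in\Ker(B)$ then $Bf=0$, hence $(A-\lam I)\inv Bf=0$, hence $Tf=f$. Consequently $\Ker(B)=\cM^\perp$ is invariant under $T$ and $T\vert_{\cM^\perp}=I$. In the chosen block form this means
\[
T=\mtrx{T_{11}&0\\ T_{21}&I^{\cM^\perp}},
\]
where $T_{11}$ coincides with the truncation $T^\nat=PTP\vert_\cM$ (with $P$ the orthogonal projection onto $\cM$). The upper-right block is zero precisely because $T$ sends $\cM^\perp$ into $\cM^\perp$. The determinant of a block-triangular operator factors, giving $\det(T)=\det(T_{11})\cdot\det(I^{\cM^\perp})=\det(T^\nat)$, which is the claimed identity.

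To finish, observe that both sides of (\ref{reldeteq}) are rational functions of $\lam$ (polynomial in the entries of $(A-\lam I)\inv$), so the identity established for $\lam\notin\Spec(A)$ extends by analytic continuation; the apparent poles of the right-hand side at eigenvalues of $A$ are cancelled on the left by the factor $p(0,\lam)$. I do not anticipate any genuine obstacle: the only point that requires a little care is the verification that it is $\Ran(B^\ast)$ (and not $\Ran(B)$) that must be taken as the truncation subspace, which is forced by the fact that the invariant subspace on which $T$ acts trivially is $\Ker(B)=\Ran(B^\ast)^\perp$.
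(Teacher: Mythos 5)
Your proof is correct and follows essentially the same route as the paper: the factorization $A+\gam B-\lam I=(A-\lam I)(I+\gam(A-\lam I)^{-1}B)$ for the first equality, and the block decomposition with respect to $\cH=\Ker(B)\oplus\Ran(B^\ast)$ for the second. You simply spell out the detail (that $T$ restricts to the identity on $\Ker(B)$, making the block matrix triangular) that the paper leaves implicit.
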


\begin{proof}
This is a combination of two identities
\begin{eqnarray*}
\frac{p(\gam,\lam)}{p(0,\lam)}&=&\det  (I+\gam(A-\lam I)^{-1}B),\\
&=&\det \left( (I+\gam(A-\lam I)^{-1}B)^\nat\right).
\end{eqnarray*}
The first equality is obtained by calculating the determinants of both sides of the identity
\[
A+\gam B -\lam I=(A-\lam I)(I+\gam(A-\lam I)^{-1}B).
\]
The second equality is proved by writing $I+\gam(A-\lam I)^{-1}B$ as a $2\times 2$ block matrix using the orthogonal decomposition
\[
\cH=\Ker(B)\oplus \Ran(B^\ast).
\]
\end{proof}

\begin{lemma}\label{useH2}
Given (H2) and (H3), there exists a finite set $F_1\subset S_B$, such that $A_\gam$ has $N$ distinct eigenvalues, each with algebraic multiplicity $1$, for every $\gam\in S_B \backslash F_1$. If $(\gam,\lam)\in \cR$ and $\gam\notin F_1$ then $\frac{\partial p}{\partial \lam}(\gam,\lam)\not= 0$.
\end{lemma}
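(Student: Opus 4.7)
The plan is to reduce the claim to the discriminant criterion in Proposition~\ref{polyroots}. View $p(\gam,\lam)$ as a polynomial of degree $N$ in $\lam$ whose coefficients are themselves polynomials in $\gam$ (the leading coefficient in $\lam$ is $(-1)^N$, independent of $\gam$). Let $D(\gam)$ denote the discriminant of $p(\gam,\cdot)$ regarded as a polynomial in $\lam$. By Proposition~\ref{polyroots}(iv), $D$ is a polynomial in the coefficients of $p(\gam,\cdot)$, and hence $D$ is a polynomial in $\gam$.

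The crux is to show that $D$ is not identically zero. Here I would use (H3): at $\gam = 0$ one has $p(0,\lam) = \det(A - \lam I)$, whose roots are the eigenvalues of $A$, all simple by hypothesis. Proposition~\ref{polyroots} then gives $D(0) \ne 0$. Hence $D$ is a nonzero polynomial in $\gam$, so its zero set in $\C$ is finite; define
\[
F_1 \;=\; \{\gam \in S_B : D(\gam) = 0\},
\]
which is automatically a finite subset of $S_B$.

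For $\gam \in S_B \setminus F_1$ we have $D(\gam) \ne 0$, so Proposition~\ref{polyroots} yields that every root of $p(\gam,\cdot)$ is simple; equivalently, $A_\gam$ has $N$ distinct eigenvalues, each of algebraic multiplicity $1$. Finally, if $(\gam,\lam) \in \cR$ and $\gam \notin F_1$, then $\lam$ is a root of $p(\gam,\cdot)$, and simplicity of that root is precisely the statement that $p(\gam,\lam) = 0$ and $\frac{\partial p}{\partial \lam}(\gam,\lam) \ne 0$ do not occur simultaneously; thus $\frac{\partial p}{\partial \lam}(\gam,\lam) \ne 0$, as required.

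There is no real obstacle here beyond bookkeeping: the argument is essentially the classical fact that a one-parameter family of monic polynomials with simple roots at one parameter value has simple roots off a finite exceptional set. I note that (H2) plays no role in the proof itself — it is a standing assumption of the section used to conclude via Theorem~\ref{SBimpliesC+} that $\cR \subset S_B \times \C_+$, so that the second conclusion of the lemma is a meaningful statement about $\cR$.
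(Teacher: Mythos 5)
Your proof is correct and follows essentially the same route as the paper: both reduce to the discriminant criterion of Proposition~\ref{polyroots}, note that the discriminant is a polynomial in $\gam$ that is nonzero at $\gam=0$ by (H3), and define $F_1$ as its zero set in $S_B$ (your observation that (H2) is not really needed here, beyond placing $\cR$ in $S_B\times\C_+$, matches the paper, whose proof invokes Theorem~\ref{SBimpliesC+} only to locate the eigenvalues). The only blemish is a slip in your last paragraph, where simplicity of a root should be phrased as the non-coexistence of $p(\gam,\lam)=0$ and $\frac{\partial p}{\partial\lam}(\gam,\lam)=0$, not of $p(\gam,\lam)=0$ and $\frac{\partial p}{\partial\lam}(\gam,\lam)\ne 0$.
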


\begin{proof}
The eigenvalues of $A_\gam$ are the roots of the polynomial
$q_\gam(\lam)=p(\gam,\lam)$, which is of degree $N$ in $\lam$ with
leading coefficient $(-1)^N$. The eigenvalues of $A_\gam$ lie in $\C_+$ by Theorem~\ref{SBimpliesC+}. They all have
algebraic multiplicity $1$ if and only if the discriminant of
$q_\gam$ is non-zero, by Proposition~\ref{polyroots}. The
coefficients of $q_\gam$ are polynomials in $\gam$, so the
discriminant is also a polynomial $r$ in $\gam$. The hypothesis (H3)
implies that $r(0)\not= 0$, so $r$ is not identically zero, and it
has only a finite number of roots. The first part of the proof is
completed by putting $F_1=\{\gam\in S_B:r(\gam)=0\}$. The proof of
the final part of the theorem uses Proposition~\ref{polyroots}
again.
\end{proof}

\begin{lemma}\label{useH3}
Given (H2) and (H4), there exists a finite set $F_2\subset S_B$, such that if $(\gam,\lam)\in \cR$ and $\gam\notin F_2$ then $\frac{\partial
p}{\partial \gam}(\gam,\lam)\not= 0$.
\end{lemma}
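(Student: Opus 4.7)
The plan is to mimic the proof of Lemma~\ref{useH2}, this time using the $\lam$-resultant
\[
r_2(\gam) \;:=\; {\rm Res}_\lam\bigl(p(\gam,\cdot),\; \partial_\gam p(\gam,\cdot)\bigr) \in \C[\gam].
\]
Standard resultant theory gives $r_2(\gam_0)=0$ iff $p(\gam_0,\cdot)$ and $\partial_\gam p(\gam_0,\cdot)$ share a common root in $\lam$, while $r_2\equiv 0$ iff $p$ and $\partial_\gam p$ share an irreducible factor $q(\gam,\lam)\in\C[\gam,\lam]$ of positive $\lam$-degree. Once $r_2\not\equiv 0$ is established, $F_2:=S_B\cap\{r_2=0\}$ is a finite set of the desired type.

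The substantive task is therefore ruling out $r_2\equiv 0$. If such a $q$ existed, writing $p=\prod_i q_i^{m_i}$ into distinct irreducibles and computing $\partial_\gam p$ shows that $q\mid\partial_\gam p$ forces either (a)~$q^2\mid p$ (a repeated factor of $p$), or (b)~$q$ is independent of $\gam$, hence $q=\lam-\lam_0$ for some $\lam_0\in\C$. Case~(b) is dispatched immediately using Theorem~\ref{SBimpliesC+}: $(\lam-\lam_0)\mid p$ makes $\lam_0$ an eigenvalue of $A_\gam$ for every $\gam\in\C$, so $\lam_0\in\Spec(A)\subset\R$ at $\gam=0$, while at any $\gam\in S_B\setminus\{0\}$ the theorem puts $\lam_0\in\C_+$ (using (H2)); these are disjoint, a contradiction.

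Case~(a) is where (H4) genuinely enters. Under the scaling $\mu=1/\gam$, $\eta=\lam/\gam$ and $\tilde p(\mu,\eta):=\det(B+\mu A-\eta I)$, one has $p(\gam,\lam)=\gam^N \tilde p(1/\gam,\lam/\gam)$ and $\tilde p(0,\eta)=\det(B-\eta I)$, whose nonzero roots are simple by (H4). A squared factor of $\tilde p$ at $\mu=0$ can therefore only have roots at $\eta=0$; equivalently, the repeated factor $q$ of $p$ must describe ``bounded'' eigenvalue branches of $A_\gam$, i.e.\ those with $\lam/\gam\to 0$ as $\gam\to\infty$. These branches approach the spectrum of the truncation $A^\nat$ of $A$ to $\Ker(B)$, and persistent algebraic multiplicity $\geq 2$ among them forces, via a Schur-complement/Lyapunov--Schmidt expansion, the effective perturbation on the relevant $A^\nat$-eigenspace to be scalar, producing a proper $A$-invariant subspace containing $\Ran(B)$ which contradicts the cyclicity hypothesis (H2). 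The main obstacle is precisely this step in case~(a): case~(b) follows almost instantly from Theorem~\ref{SBimpliesC+}, but establishing the link between a persistent repeated bounded eigenvalue and failure of cyclicity is the technical heart of the argument.
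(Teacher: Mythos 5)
Your reduction to ruling out $r_2\equiv 0$, the factorization dichotomy, and your case~(b) are all fine; case~(b) is correctly dispatched by Theorem~\ref{SBimpliesC+}. The genuine gap is case~(a), which you yourself flag as ``the technical heart'' and then only sketch. The sketched mechanism is not the right one: you reduce (correctly, via (H4) and perturbation of the $M$ simple nonzero eigenvalues of $B$) to a repeated factor living on the bounded eigenvalue branches, and then claim that a persistent algebraic multiplicity $\geq 2$ there must contradict cyclicity (H2). But cyclicity does not by itself exclude persistent multiple eigenvalues: take $A=\alp I$ and $B=I$ on $\C^2$; then $B$ is cyclic for $A$, $p(\gam,\lam)=(\alp+\gam-\lam)^2$ is a perfect square, and the conclusion of the lemma fails --- of course (H4) fails here, which is exactly the point. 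So any correct treatment of case~(a) must make (H4) do the work for the \emph{bounded} branches as well, and your sketch only uses (H4) for the unbounded ones. I also see no route from a persistent Jordan block on a bounded branch to ``a proper $A$-invariant subspace containing $\Ran(B)$''; that implication is asserted, not proved, and it is the entire content of the case.

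The way to close the gap --- and it is what the paper does, eliminating $\gam$ instead of $\lam$ --- is to regard $p(\cdot,\lam)$ as a polynomial in $\gam$ of degree exactly $M$ for $\lam\in\C_+$, with leading coefficient $\det(A^\nat-\lam I^\nat)\prod_{r=1}^M\bet_r\neq 0$ ($A^\nat$ being the truncation of $A$ to $\Ker(B)$, self-adjoint, and $\lam\notin\R$). A repeated irreducible factor $q$ of $p$ with $\deg_\gam q\geq 1$ would force $p(\cdot,\lam)$ to have a repeated root in $\gam$ for every large $\lam\in\C_+$; but writing $\gam=s\lam$, the $M$ roots $s$ of $\lam^{-N}\det(A+s\lam B-\lam I)=0$ converge as $|\lam|\to\infty$ to the roots of $\det(sB-I)=0$, namely $\bet_1^{-1},\ldots,\bet_M^{-1}$, which are distinct by (H4). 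This kills case~(a) with $\deg_\gam q\geq 1$, and $\deg_\gam q=0$ is your case~(b). Equivalently, the paper forms the discriminant of $p$ in the variable $\gam$ (a polynomial in $\lam$), shows it is not identically zero by this large-$\lam$ rescaling, and pulls the finitely many bad $\lam$ back to finitely many bad $\gam$ using $\deg_\gam p=M$; that projection onto $\lam$ rather than $\gam$ is precisely what lets (H4) be applied and is the idea missing from your argument.
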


\begin{proof}
One may evaluate $p(\gam,\lam)$ by combining an orthonormal basis of $\Ker (B)$ with a set of $M$ eigenvectors associated with the
non-zero eigenvalues $\bet_1,\ldots,\bet_M$ of $B$. If one does so
then one sees that $q_\lam(\gam)=p(\gam,\lam)$ is a polynomial with
degree (at most) $M$ in $\gam$ whose leading coefficient is $\det(A^\nat-\lam I^\nat)\prod_{r=1}^M\bet_r$, where $A^\nat$ is the truncation of $A$ to $\Ker(B)$ and $I^\nat$ is the identity operator on this subspace. Since $A^\nat$ is self-adjoint and $\lam\in\C_+$, the determinant is non-zero and the degree of $q_\lam$ is $M$.

One may see as in the proof of Lemma~\ref{useH2} that the roots of
$q_\lam$ are all distinct if and only if a certain polynomial
$r(\lam)$ is non-zero. If $p(\gam,\lam)=0$ and $\frac{\partial
p}{\partial \gam}(\gam,\lam)=0$ then $r(\lam)=0$.  The set $G_1$ of
roots of $r$ is finite provided $r$ does not vanish identically.
Assuming this,
\[
F_2=\{ \gam\in \C_+:(\gam,\lam)\in \cR \mbox{ for some } \lam\in G_1\}
\]
is also finite and $\frac{\partial p}{\partial
\gam}(\gam,\lam)\not=0$ for all $(\gam,\lam)\in \cR$ such that
$\gam\notin F_2$.

The polynomial $r$ is not identically zero provided the $M$
solutions $\gam$ of $\det(A+\gam B-\lam I)=0$ are distinct for all
large enough $\lam\in\C_+$. This is true if and only if the
solutions $s$ of $\lam^{-N}\det(A+s\lam B-\lam I)=0$ are distinct
for all large enough $\lam\in\C_+$. These solutions converge as
$|\lam|\to\infty$ to the solutions of $\det(sB-I)=0$, which are
$\bet_1^{-1},\ldots, \bet_M^{-1}$. They are distinct by (H4).
\end{proof}

The following lemma will be used in the proof of
Theorem~\ref{maintheorem}.

\begin{lemma}\label{inverses}
Let $L$ be a bounded operator on $\cH=\cH_1\oplus \cH_2$ with block
matrix
\[
L=\mtrx{P&Q \\ R&S},
\]
where the entries satisfy $\norm Q\norm \leq c$, $\norm R\norm \leq
c$, $\norm S^{-1}\norm \leq 1/(2c)$ and $\norm P^{-1}\norm <\eps\leq
1/(2c)$. Then $L$ is invertible and
\[
\left\| L^{-1}-\mtrx{0&0\\ 0&S^{-1}}\right\| <2\eps.
\]
\end{lemma}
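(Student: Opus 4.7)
The plan is to invert $L$ via the Schur complement relative to the $(2,2)$ block $S$, exploiting the fact that even though $\|P^{-1}\|$ is small (so $P$ is ``large''), the cross terms are controlled. Concretely, set $\tilde{P} = P - Q S^{-1} R$ (the Schur complement of $S$ in $L$) and show $\tilde P$ is invertible with $\|\tilde P^{-1}\|$ of order $\eps$, so that the standard block inversion formula applies.

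For the invertibility of $\tilde P$, I would factor $\tilde P = P(I - P^{-1} Q S^{-1} R)$ and estimate
\[
\|P^{-1} Q S^{-1} R\| \leq \|P^{-1}\|\,\|Q\|\,\|S^{-1}\|\,\|R\| < \eps \cdot c \cdot \tfrac{1}{2c}\cdot c = \tfrac{\eps c}{2} \leq \tfrac{1}{4},
\]
so a Neumann series gives $\|(I - P^{-1}QS^{-1}R)^{-1}\| \leq 4/3$ and hence $\|\tilde P^{-1}\| \leq (4/3)\eps$. With $\tilde P$ invertible, the standard formula
\[
L^{-1} = \mtrx{\tilde P^{-1} & -\tilde P^{-1} Q S^{-1} \\ -S^{-1} R \tilde P^{-1} & S^{-1} + S^{-1} R \tilde P^{-1} Q S^{-1}}
\]
can be verified by direct multiplication.

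Next I would compute the four blocks of $L^{-1} - \mtrx{0 & 0 \\ 0 & S^{-1}}$ and apply the bounds $\|Q\|,\|R\|\leq c$, $\|S^{-1}\|\leq 1/(2c)$ and $\|\tilde P^{-1}\|\leq (4/3)\eps$ to each. This gives the blockwise estimates
\[
\|\tilde P^{-1}\| \leq \tfrac{4}{3}\eps,\quad \|\tilde P^{-1} Q S^{-1}\|\leq \tfrac{2}{3}\eps,\quad \|S^{-1} R \tilde P^{-1}\|\leq \tfrac{2}{3}\eps,\quad \|S^{-1} R \tilde P^{-1} Q S^{-1}\|\leq \tfrac{1}{3}\eps.
\]
Finally, to convert these blockwise bounds into an operator norm bound, I would use the standard inequality $\|L\|^2 \leq \sum_{i,j}\|L_{ij}\|^2$ for a $2\times 2$ block operator (an immediate consequence of Cauchy--Schwarz applied to $\|L(f_1\oplus f_2)\|^2$), yielding
\[
\left\| L^{-1} - \mtrx{0 & 0 \\ 0 & S^{-1}}\right\|^2 \leq \left(\tfrac{16}{9}+\tfrac{4}{9}+\tfrac{4}{9}+\tfrac{1}{9}\right)\eps^2 = \tfrac{25}{9}\eps^2,
\]
so the norm is at most $(5/3)\eps < 2\eps$, as required.

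None of the steps presents a serious obstacle; the only thing to be careful about is the invertibility of the Schur complement, which is where the two smallness hypotheses $\|P^{-1}\|<\eps$ and $\eps\leq 1/(2c)$ combine to ensure the Neumann series converges comfortably inside the unit disk. Everything else is routine bookkeeping with operator norm inequalities.
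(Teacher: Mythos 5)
Your proof is correct, but it takes a genuinely different route from the paper's. You invert $L$ exactly via the Schur complement $\tilde{P}=P-QS^{-1}R$ of $S$: the factorization $\tilde{P}=P(I-P^{-1}QS^{-1}R)$ together with $\Vert P^{-1}QS^{-1}R\Vert<\eps c/2\leq 1/4$ makes the Neumann series converge and gives $\Vert\tilde{P}^{-1}\Vert\leq\frac{4}{3}\eps$, after which the standard block-inversion formula yields all four blocks of $L^{-1}-\mathrm{diag}(0,S^{-1})$ explicitly, and the inequality $\Vert L\Vert^2\leq\sum_{i,j}\Vert L_{ij}\Vert^2$ (valid for $2\times 2$ block operators by Cauchy--Schwarz, as you note) converts the blockwise bounds $\frac{4}{3}\eps,\frac{2}{3}\eps,\frac{2}{3}\eps,\frac{1}{3}\eps$ into the operator bound $\frac{5}{3}\eps$. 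The paper instead writes $L=X+Y$ with $X=\mathrm{diag}(P,S)$ and $Y$ the off-diagonal part, observes $\Vert YX^{-1}\Vert\leq 1/2$, and estimates $\Vert(X+Y)^{-1}-X^{-1}\Vert\leq 2\Vert X^{-1}YX^{-1}\Vert<\eps$ from the Neumann expansion, adding the further error $\Vert P^{-1}\Vert<\eps$ incurred in replacing $X^{-1}$ by $\mathrm{diag}(0,S^{-1})$. The paper's argument is shorter and needs no inversion formula; yours buys explicit expressions and separate estimates for each block of $L^{-1}$, and as a by-product the slightly sharper constant $\frac{5}{3}\eps$ in place of $2\eps$. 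Since $\eps>\Vert P^{-1}\Vert\geq 0$ forces $\eps>0$, your final strict inequality $\frac{5}{3}\eps<2\eps$ does hold, so the stated conclusion follows.
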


\begin{proof}
If one puts $X=\mtrx{P&0 \\ 0&S}$ and $Y=\mtrx{0&Q \\ R&0}$ then
$\norm Y\norm \leq c$ and $\norm X^{-1}\norm \leq 1/(2c)$. Therefore
$\norm YX^{-1}\norm\leq 1/2$ and the perturbation expansion
\[
(X+Y)^{-1}=X^{-1}\sum_{n=0}^\infty (-YX^{-1})^n
\]
implies that $L=X+Y$ is invertible with $\norm L^{-1}\norm \leq
1/c$. Moreover
\begin{eqnarray*}
\left\| L^{-1}-\mtrx{0&0\\ 0&S^{-1}}\right\|  &\leq&%
\norm (X+Y)^{-1}-X^{-1}\norm +\eps\\
&\leq &\norm
-X^{-1}YX^{-1}+(X^{-1}YX^{-1})(YX^{-1})\\
&&-(X^{-1}YX^{-1})(YX^{-1})^2+\ldots\norm +\eps\\
&\leq & 2\norm X^{-1}YX^{-1}\norm  +\eps\\
&=&2\left\Vert\mtrx{0&P^{-1}Q S^{-1}\\
S^{-1}RP^{-1}&0}\right\Vert +\eps\\
&\leq 2\eps.
\end{eqnarray*}

\end{proof}

We use the above results to connect the spectrum of $A_\gam$ for large and small $\gam$. Let $\cG$ denote the set of all continuously differentiable curves $g:[0,\infty)\to \C$ such that $g(0)=0$, $g(t)\in S_B$ for every $t>0$ and $g^\pr(t)$ does not vanish for any $t\in [0,\infty)$. Let $F=F_1\cup F_2$ where $F_1$ is defined as in Lemma~\ref{useH2} and $F_2$ is defined as in Lemma~\ref{useH3}. Let $\cG_0$ denote the set of all curves $g\in\cG$ such that $g(0)=0$, $g(t)\in S_B\backslash F$ for every $t>0$ and $\lim_{t\to\infty}|g(t)|=\infty$. In the next theorem, one can impose stronger conditions on $g$ (e.g.\ $C^\infty$ or real analyticity) and obtain similarly strengthened conclusions on the eigenvalue curves $\lam_r$.

Our main theorem below is an example of monodromy in the sense that we prove that certain one-parameter curves that avoid a finite number of singularities may have different end points even if they have the same starting point, provided they take different routes around the singularities; the difference is measured by an element of a permutation group.

\begin{theorem}\label{maintheorem}
Given (H2--5), let $g\in\cG_0$. Then there exist $N$ curves
$\lam_r\in\cG$ such that $\Spec(A_{g(t)})=\{\lam_1(t),\ldots,
\lam_N(t)\}$ for all $t\in[0,\infty)$. One can choose the ordering
of these so that $\lam_r(0)=\alp_r$ for all $r\in\{1,\ldots, N\}$,
where $\alp_r$ are the eigenvalues of $A$ written in increasing
order. Assuming this is done, there exists a $g$-dependent
permutation $\pi$ on $\{1,\ldots, N\}$ such that
\[
\lim_{t\to\infty}\frac{\lam_{\pi(r)}(t)}{g(t)}=\bet_r
\]
for $1\leq r\leq M$, where $\bet_r$ are the non-zero eigenvalues of
$B$ written in any fixed order, and
\[
\lim_{t\to\infty}\lam_{\pi(M+r)}(t)=\del_r
\]
for $1\leq r\leq N-M$, where $\del_r$ are the non-zero eigenvalues
of the truncation $A^\nat$ of $A$ to $\Ker(B)$ written in increasing
order (the eigenvalues are distinct by (H5). If $g\in \cG_0$ is a real analytic curve then so are all the curves $\lam_r$.
\end{theorem}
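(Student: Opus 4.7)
The plan is to apply the implicit function theorem to the characteristic polynomial $p(\gam,\lam)$ along the curve $\gam=g(t)$, then separately identify the asymptotic behavior of the branches as $t\to 0$ and $t\to\infty$, and finally compare the two labelings to produce the permutation $\pi$.

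Local existence and smoothness of the eigenvalue curves: fix $t>0$. By Theorem~\ref{SBimpliesC+} the eigenvalues lie in $\C_+$, and since $g(t)\notin F_1$, Lemma~\ref{useH2} tells us that $\frac{\partial p}{\partial \lam}(g(t),\lam)\neq 0$ at each root $\lam$. The implicit function theorem therefore produces, locally in $t$, $N$ distinct $C^1$ branches $\lam_1(t),\ldots,\lam_N(t)$, and these branches continue globally on $(0,\infty)$ because there are no collisions (again by Lemma~\ref{useH2}). The regularity of each $\lam_r$ is exactly that of $g$, and for real analytic $g$ the analytic version of the implicit function theorem gives real analytic $\lam_r$. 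At $t=0$ we have $A_{g(0)}=A$, whose eigenvalues are simple by (H3), so standard Rellich--Kato analytic perturbation theory extends each $\lam_r$ continuously (and with the regularity of $g$) to $t=0$ with a choice of ordering making $\lam_r(0)=\alp_r$.

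The large-$t$ analysis splits into two regimes. For the divergent branches, set $\mu=\lam/\gam$ with $\gam=g(t)$; then $p(\gam,\lam)=0$ is equivalent to $\det(\gam^{-1}A+B-\mu I)=0$, which as $|\gam|\to\infty$ converges coefficient-wise to $\det(B-\mu I)=0$. The latter has roots $\bet_1,\ldots,\bet_M$, each simple by (H4), together with the root $0$ of multiplicity $N-M$. The simple non-zero roots persist under the small perturbation $\gam^{-1}A$ and produce, via the implicit function theorem applied to the $\mu$-equation, exactly $M$ branches with $\mu\to\bet_r$, hence $M$ eigenvalues $\lam$ of $A_\gam$ with $\lam/g(t)\to\bet_r$. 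For the remaining $N-M$ branches one uses the orthogonal decomposition $\cH=\Ker(B)\oplus \Ran(B^\ast)$, in which
\[
A+\gam B-\lam I=\mtrx{A^\nat-\lam I_1 & A_{12}\\ A_{21} & A_{22}+\gam B_{22}-\lam I_2},
\]
with $B_{22}$ invertible and sectorial on $\Ran(B^\ast)$. For $\lam$ bounded and $|\gam|$ large the sectoriality of $\gam B_{22}$ forces $\norm (A_{22}+\gam B_{22}-\lam I_2)^{-1}\norm =O(|\gam|^{-1})$, so Lemma~\ref{inverses} applies with the roles of the diagonal blocks as indicated, and a Schur-complement computation yields
\[
\frac{p(\gam,\lam)}{\det(A_{22}+\gam B_{22}-\lam I_2)}\longrightarrow \det(A^\nat-\lam I_1)\qquad(|\gam|\to\infty)
\]
uniformly on compact sets in $\lam$. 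The $N-M$ bounded branches therefore converge to the roots of $\det(A^\nat-\lam I_1)$, which are the $\del_r$; these are distinct by (H5), so again a simple-roots implicit function argument identifies each branch with exactly one $\del_r$.

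Combining the two labelings: the asymptotic limits at $t=\infty$ define a second ordering of the set $\{\lam_1(t),\ldots,\lam_N(t)\}$, and the comparison with the $t=0$ ordering by the $\alp_r$ defines the permutation $\pi$; different choices of $g\in \cG_0$ may yield different $\pi$ because homotopies between them are obstructed by the exceptional set $F$, which is the monodromy content of the theorem. The main technical obstacle is the block-dominance step for the bounded branches: one must verify, using (H1) and $\gam\in S_B$, the uniform lower bound on $A_{22}+\gam B_{22}-\lam I_2$ needed to invoke Lemma~\ref{inverses}, and then pass from the resulting resolvent estimate to the determinantal convergence uniformly on a neighbourhood of each $\del_r$; the simplicity of the $\del_r$ guaranteed by (H5) is what then lets the implicit function theorem pin each bounded branch to a single limit $\del_{\pi^{-1}(M+r)}$.
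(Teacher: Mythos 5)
Your proposal follows essentially the same route as the paper: implicit\hyphenation{}-function-theorem continuation of simple eigenvalue branches (Lemma~\ref{useH2} plus (H3) at $t=0$), the rescaling $\mu=\lam/\gam$ with (H4) for the $M$ divergent branches, and the decomposition $\cH=\Ker(B)\oplus\Ran(B^\ast)$ with Lemma~\ref{inverses} for the $N-M$ bounded branches. The one substantive difference is the bounded-branch step: the paper shifts by $kI$ and compares the resolvent $(A+\gam B+kI)^{-1}$ with the block-diagonal operator with entry $(A^\nat+kI)^{-1}$, deducing convergence and multiplicity of the eigenvalues by perturbing a self-adjoint operator; you instead work at the determinant level via a Schur complement, showing $p(\gam,\lam)/\det(A_{22}+\gam B_{22}-\lam I_2)\to\det(A^\nat-\lam I_1)$ locally uniformly. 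Both are valid; yours is arguably more direct because it stays with the characteristic polynomial, while the paper's resolvent version delivers the simplicity of the limiting eigenvalues for free from self-adjoint perturbation theory. Note that your invertibility estimate for $A_{22}+\gam B_{22}-\lam I_2$ needs only that $B_{22}$ is invertible on $\Ran(B^\ast)$ (which follows from Lemma~\ref{sectoriallemma} in finite dimensions) together with a Neumann series; sectoriality of $\gam B_{22}$ is not the operative point.

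There is one genuine omission. The theorem asserts $\lam_r\in\cG$, and membership in $\cG$ requires $\lam_r^\pr(t)\not=0$ for every $t$. Your argument establishes existence and $C^1$ (or real analytic) regularity of the branches but never verifies that their derivatives do not vanish. The paper obtains this by differentiating $p(g(t),\lam(t))=0$ to get
\[
\frac{\partial p}{\partial \gam}(g(t),\lam(t))\,g^\pr(t)
+\frac{\partial p}{\partial \lam}(g(t),\lam(t))\,\lam^\pr(t)=0,
\]
and invoking Lemma~\ref{useH3} (this is where $g(t)\notin F_2$ is actually used) to conclude that $\frac{\partial p}{\partial \gam}\not=0$, so that $g^\pr(t)\not=0$ forces $\lam^\pr(t)\not=0$. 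As written, the set $F_2$ plays no role in your proof, which is the sign that this step is missing; it is easily repaired by the identity above.
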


\begin{proof}
If $t\geq 0$ then $g(t)\notin F$, so the eigenvalues $\lam_1,\ldots, \lam_N$ of $H_{g(t)}$ all have algebraic multiplicity $1$. Perturbation theory implies that each eigenvalue of $A_\gam$ is an analytic function of $\gam$ if $\gam=g(t)$. Therefore each eigenvalue $\lam(t)$ of $A_{g(t)}$ is a $C^1$ function of $t$, or real-analytic if $g$ is real analytic. These perturbation arguments imply all the statements of the theorem that relate to the limit $t\to 0$.

We next observe that $p(g(t),\lam(t))=0$ for all $t> 0$. Differentiating this with respect to $t$ yields
\[
\frac{\partial p}{\partial \gam}(g(t),\lam(t))g^\pr(t)+\\
\frac{\partial p}{\partial \lam}(g(t),\lam(t))\lam^\pr(t)=0.
\]
By applying Lemmas~\ref{useH2} and \ref{useH3}, we deduce that $\lam^\pr(t)$ is non-zero for every $t>0$.

In order to prove the remainder of the theorem we need only find the asymptotic forms of the eigenvalues of $H_\gam$ as $|\gam|\to\infty$ and apply the results to $\gam=g(t)$ as $t\to\infty$. The spectrum of $A_\gam$ is a set rather than an ordered sequence and there is no reason for any ordering of the eigenvalues of $A_\gam$ for large $\gam$ to be related to the ordering for $\gam = 0$.

We start by describing the large eigenvalues of $A_\gam$. For every $r\in \{ 1,\ldots, M\} $, perturbation theory and (H4) together imply that $B+\gam^{-1}A$ has a simple eigenvalue of the form
\[
\mu_r=\bet_r+\gam^{-1}\la A f_r,f_r^\ast\ra+O(\gam^{-2})
\]
as $|\gam|\to\infty$, where $f_r$ is an eigenvector of $B$ associated with the eigenvalue $\bet_r$, $f_r^\ast$ is an eigenvector of $B^\ast$ associated with the eigenvalue $\overline{\bet_r}$ and we normalize so that $\la f_r,f_r^\ast\ra=1$. This implies that $A_\gam$ has a simple eigenvalue of the form
\[
\lam_r=\gam\bet_r+\la A f_r,f_r^\ast\ra+O(\gam^{-1})
\]
for all $r\in\{1,\ldots,M\}$.

We next use Lemma~\ref{inverses} to describe the small eigenvalues of $A_\gam$. If one defines $\cH_1=\Ran(B)$ and $\cH_2=\Ker(B)$ then $\C^N=\cH_1\oplus \cH_2$ is an orthogonal direct sum by Lemma~\ref{sectoriallemma}. One may write
\[
A+\gam B=\mtrx{C+\gam B^\nat &E\\ E^\ast & A^\nat}.
\]
where $B^\nat$ is the truncation of $B$ to $\cH_1$, $C$ is the truncation of $A$ to $\cH_1$ and $A^\nat$ is the truncation of $A$ to $\cH_2$. We now add $kI$ to both sides where the constant $k$ is independent of $\gam$ and large enough to ensure that $\norm (A^\nat+kI)^{-1}\norm\leq 1/(2c)$, where $c=\norm E\norm +1=\norm E^\ast\norm+1$. Using the fact that $B^\nat$ is invertible on $\cH_1$, we observe that
\[
\eps=\norm(C+\gam B^\nat+kI)^{-1}\norm=O(|\gam|^{-1})
\]
as $|\gam|\to\infty$. Lemma~\ref{inverses} now implies that $A+\gam B+kI$ is invertible and
\begin{eq}
\left\Vert (A+\gam B+kI)^{-1}-\mtrx{0&0\\0&(A^\nat+kI)^{-1}}\right\Vert%
=O(|\gam|^{-1})\label{ABDcompare}
\end{eq}
for all large enough $|\gam|$. Every eigenvalue $\del_r$ of $A^\nat$ satisfies
\[
2\leq 2c\leq |\del_r+k|\leq \norm A^\nat\norm +k.
\]
Since $A^\nat$ is self-adjoint, a perturbation argument applied to (\ref{ABDcompare}) implies that there is an eigenvalue $\mu_r$ of $A+\gam B$ such that
\[
|(\mu_r+k)^{-1}-(\del_r+k)^{-1}|=O(|\gam|^{-1})
\]
as $|\gam|\to\infty$. By combining the last two equations we obtain
\[
|\mu_r-\del_r|=O(|\gam|^{-1})
\]
as $|\gam|\to\infty$. Moreover, the perturbation argument proves that $\mu_r$ has the same multiplicity $1$ as $\del_r$ for all $r\in\{1,\ldots,N-M\}$.

We have now described $N$ distinct simple eigenvalues of $A_\gam$ for all sufficiently large $|\gam|$. Since $A_\gam$ is an $N\times N$ matrix there are no other eigenvalues.
\end{proof}

Simple continuity arguments show that two homotopic curves $g_1,\, g_2\in \cG_0$ give rise to the same permutation $\pi$. The fact that non-homotopic curves may give rise to different permutations is demonstrated in Examples~\ref{rank1example} and \ref{Neq5example}.

\section{Localization}

In this section we describe a procedure for approximating the spectrum of $A_\gam=A+\gam B$ in a given region of $\C_+$. We assume that $A$ is a (possibly unbounded) self-adjoint operator on $\cH$, that $\cK$ is an auxiliary Hilbert space, that $B=CD$ and that $C:\cK\to \cH$, $D:\cH\to\cK$ are bounded operators.

We first note that the Birman-Schwinger method does not depend on self-adjointness of the perturbation. Numerically, the method is most useful when the dimension of $\cK$ is much smaller than that of $\cH$, but one need not assume that either is finite-dimensional.

\begin{lemma}\label{BS}
If $\lam\notin \Spec(A)$ and $\gam\not= 0$ then $\lam$ is an eigenvalue of $A_\gam$ if and only if $-1/\gam$ is an eigenvalue of
\[
m(\lam)=D(A-\lam I)^{-1}C \in \cL(\cK).
\]
 If $\cK$ is finite-dimensional then $\lam\notin \Spec(A)$ is an eigenvalue of $A_\gam$ if and only if the jointly analytic function
\[
p(\gam,\lam)=\det\left(I+\gam m(\lam) \right)
\]
vanishes.
\end{lemma}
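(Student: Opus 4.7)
The plan is to run the standard Birman--Schwinger argument, adapted to the non-self-adjoint setting where $B$ is factored as $CD$ rather than written as $|V|^{1/2}\sign(V)|V|^{1/2}$.

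First I would handle the forward implication. Suppose $\lam\notin\Spec(A)$ and $A_\gam f=\lam f$ for some non-zero $f\in\cH$. Rewriting as $(A-\lam I)f=-\gam CDf$ and applying $(A-\lam I)^{-1}$ gives $f=-\gam (A-\lam I)^{-1}C(Df)$. Applying $D$ to both sides yields $Df=-\gam m(\lam)Df$, i.e.\ $m(\lam)u=-\gam^{-1}u$ where $u:=Df$. The small thing to check is that $u\neq 0$: if $Df=0$ then $(A-\lam I)f=-\gam CDf=0$, forcing $f=0$, a contradiction.

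Next I would do the reverse implication. Suppose $u\in\cK$ is non-zero with $m(\lam)u=-\gam^{-1}u$. Define $f:=-\gam(A-\lam I)^{-1}Cu\in\cH$. Applying $D$ gives $Df=-\gam m(\lam)u=u\neq 0$, so $f\neq 0$. Then
\[
(A-\lam I)f=-\gam Cu=-\gam C(Df),
\]
which rearranges to $A_\gam f=\lam f$, so $\lam$ is an eigenvalue of $A_\gam$.

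For the determinant statement, assume $\dim\cK<\infty$. Then $-\gam^{-1}$ is an eigenvalue of $m(\lam)$ iff $\det(m(\lam)+\gam^{-1}I)=0$ iff $\det(I+\gam m(\lam))=0$, using the identity $\det(I+\gam m(\lam))=\gam^{\dim\cK}\det(m(\lam)+\gam^{-1}I)$ and $\gam\neq 0$. Joint analyticity of $p(\gam,\lam)$ follows because $(A-\lam I)^{-1}$ depends analytically on $\lam$ on the resolvent set of $A$, hence so does the finite matrix $m(\lam)$, and $\det(I+\gam m(\lam))$ is polynomial in the entries and in $\gam$.

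There is no real obstacle here; the only slightly delicate point is the non-vanishing of $Df$ in the forward direction and of $f$ in the reverse direction, both of which are immediate from $\lam\notin\Spec(A)$ combined with $\gam\neq 0$.
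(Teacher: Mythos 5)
Your proof is correct and is essentially the paper's own Birman--Schwinger argument: the paper factors $A+\gam CD-\lam I=(I+\gam CD(A-\lam I)^{-1})(A-\lam I)$ and then invokes the abstract fact that $XY$ and $YX$ have the same non-zero eigenvalues, whereas you simply unwind that fact by mapping eigenvectors explicitly between $A_\gam$ on $\cH$ and $m(\lam)$ on $\cK$. Your explicit checks that $Df\neq 0$ and $f\neq 0$, and the determinant and analyticity remarks, are exactly the content the paper leaves implicit.
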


\begin{proof}
We start with the identity
\[
A+\gam CD-\lam I= (I+\gam CD(A-\lam I)^{-1})(A-\lam I),
\]
both sides being regarded as linear maps from $\Dom(A)$ to $\cH$. Since $A-\lam I:\Dom(A)\to \cH$ is one-one and onto, $\lam$ is an eigenvalue of $A+\gam CD$ if and only if $-1/\gam$ is an eigenvalue of $CD(A-\lam I)^{-1}$. Both implications in the first sentence now depend on the elementary fact that if $U,\, V$ are vector spaces over $\C$, $X:U\to V$, $Y:V\to U$ are linear operators and $\sig\in\C$ is non-zero, then $\sig$ is an eigenvalue of $XY$ if and only if it is an eigenvalue of $YX$.
\end{proof}

In spite of the second statement in Lemma~\ref{BS}, the $\cL(\cK)$-valued function $m$ is easier to analyze than the scalar function $p$. One says that the analytic function $m:\C_+\to \cL(\cK)$ is an operator-valued Herglotz function if $\la m(\lam)f,f\ra \in \C_+$ for every $f\in \cK\backslash \{ 0\}$ and $\lam\in\C_+$; see \cite{GKMT}.

\begin{lemma}\label{mHerglotz}
Suppose that $B=B^\ast\geq 0$, $C=D=B^{1/2}$,  $\cK$ is the closure of the range of $B$ and $\str^\nat$ is the operation of truncation to $\cK$. Then
\begin{equation}
m(\lam)=\left( B^{1/2}(A-\lam I)^{-1}B^{1/2}\right)^\nat\label{mHdef}
\end{equation}
is a $\cL(\cK)$-valued Herglotz function.
\end{lemma}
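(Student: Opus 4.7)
The plan is to establish analyticity and then verify the strict Herglotz condition via the spectral theorem for $A$. Analyticity is immediate: since $A=A^\ast$, one has $\Spec(A)\subset\R$, so $(A-\lam I)^{-1}$ is norm-analytic as an $\cL(\cH)$-valued function of $\lam\in\C_+$; composing on both sides with the bounded operator $B^{1/2}$ and then applying the bounded linear truncation map from $\cL(\cH)$ to $\cL(\cK)$ preserves analyticity.

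For the numerical range condition, fix $f\in\cK$ and $\lam\in\C_+$. Because $f\in\cK$ and truncation is $P(\cdot)P\vert_\cK$ with $P$ the orthogonal projection of $\cH$ onto $\cK$, one has $Pf=f$, so
\[
\la m(\lam)f,f\ra_\cK=\la B^{1/2}(A-\lam I)^{-1}B^{1/2}f,f\ra_\cH=\la (A-\lam I)^{-1}g,g\ra_\cH,
\]
where $g=B^{1/2}f$ and I have used that $B^{1/2}$ is self-adjoint. The spectral theorem for $A$ then gives
\[
\la m(\lam)f,f\ra=\int_\R \frac{d\mu_g(s)}{s-\lam},
\]
where $\mu_g$ is the (finite, nonnegative) scalar spectral measure of $g$ with respect to $A$. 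Taking the imaginary part,
\[
\Im \la m(\lam)f,f\ra=\Im(\lam)\int_\R \frac{d\mu_g(s)}{|s-\lam|^2}.
\]
Since $\Im(\lam)>0$ and the integrand is strictly positive everywhere, this quantity is non-negative and vanishes if and only if $\mu_g=0$, i.e.\ if and only if $g=0$.

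The remaining task is to show $g=B^{1/2}f\ne 0$ whenever $f\in\cK$ is non-zero. Because $B=B^\ast\ge 0$, the identity $\overline{\Ran(B)}=\Ker(B)^\perp$ identifies $\cK$ as the orthogonal complement of $\Ker(B)$. Moreover $\Ker(B^{1/2})=\Ker(B)$, since $\norm B^{1/2}f\norm^2=\la Bf,f\ra$. Hence any non-zero $f\in\cK$ satisfies $B^{1/2}f\ne 0$, so $\mu_g$ is a non-zero non-negative measure, the displayed integral is strictly positive, and $\la m(\lam)f,f\ra\in\C_+$. There is no serious obstacle; the only delicate point is correctly accounting for the role of the truncation to $\cK$ and using it to rule out the trivial zero $f\in\Ker(B)$ that would otherwise spoil strict positivity.
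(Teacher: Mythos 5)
Your proof is correct and follows the same route as the paper: reduce $\la m(\lam)f,f\ra$ to $\la (A-\lam I)^{-1}g,g\ra$ with $g=B^{1/2}f$, note that $g\neq 0$ for non-zero $f\in\cK$, and invoke the spectral theorem. The paper's proof is simply a compressed version of your argument, leaving the analyticity and the identification $\Ker(B^{1/2})=\Ker(B)$, $\cK=\Ker(B)^\perp$ implicit.
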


\begin{proof}
The assumptions imply that $g=B^{1/2}f\not= 0$ and that
\[
\la m(\lam)f,f\ra=\la (A-\lam I)^{-1}g,g\ra ,
\]
which lies in $\C_+$ by the spectral theorem.
\end{proof}

\begin{theorem}
If $B=B^\ast\geq 0$ has finite rank $N$ and $\lam\in\C_+$ then there are at least $1$ and at most $N$ values of $\gam$ such that $\lam$ is an eigenvalue of $A_\gam$; all such $\gam$ lie in $\C_+$.
\end{theorem}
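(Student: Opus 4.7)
The plan is to apply the Birman--Schwinger reduction of Lemma~\ref{BS} with $C=D=B^{1/2}$ and $\cK=\Ran(B)$, so that $\dim\cK=N$, and then exploit a strict form of the Herglotz property of $m(\lam)$ from Lemma~\ref{mHerglotz}. Since $\lam\in\C_+$ and $A$ is self-adjoint, $\lam\notin\Spec(A)$ and the case $\gam=0$ cannot produce $\lam$ as an eigenvalue of $A$. Therefore, by Lemma~\ref{BS}, $\lam$ is an eigenvalue of $A_\gam$ for some $\gam\ne 0$ if and only if $-1/\gam$ is an eigenvalue of the $N\times N$ matrix $m(\lam)\in\cL(\cK)$ defined by (\ref{mHdef}). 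The problem thus reduces to counting the distinct eigenvalues of $m(\lam)$ and locating them in $\C_+$.

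The key refinement I would carry out is to upgrade the Herglotz conclusion of Lemma~\ref{mHerglotz} to strict positivity of the imaginary part of $m(\lam)$ on all of $\cK$. Setting $g=B^{1/2}f$ and using the spectral theorem for $A$,
\[
\Im\la m(\lam)f,f\ra \;=\;\Im(\lam)\int \frac{1}{|x-\lam|^2}\,\rmd\mu_g(x),
\]
where $\mu_g$ is the spectral measure of $A$ associated with $g$. This is strictly positive provided $g\ne 0$, and the choice $\cK=\Ran(B)$ ensures $B^{1/2}f\ne 0$ for every $0\ne f\in\cK$, because in finite dimensions $\Ker(B^{1/2})=\Ker(B)=\cK^\perp$.

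From this strict positivity I would deduce that every eigenvalue $\mu$ of $m(\lam)$ lies in $\C_+$: if $m(\lam)f=\mu f$ with $f\ne 0$, then $\mu\|f\|^2=\la m(\lam)f,f\ra$, so $\Im\mu>0$. In particular $\mu\ne 0$, so the associated coupling constant $\gam=-1/\mu$ is well defined, and the map $\mu\mapsto -1/\mu$ sends $\C_+$ to $\C_+$ (if $\mu=a+bi$ with $b>0$ then $-1/\mu=(-a+bi)/|\mu|^2$), placing $\gam$ in $\C_+$ as required.

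Finally, I would close the argument by noting that an $N\times N$ complex matrix has at least one and at most $N$ distinct eigenvalues, and that distinct non-zero eigenvalues of $m(\lam)$ yield distinct values of $\gam=-1/\mu$. This produces between $1$ and $N$ admissible $\gam$'s, all in $\C_+$. I do not foresee a substantial obstacle; the only delicate point is the passage from the weak Herglotz inequality $\la m(\lam)f,f\ra\in\C_+\cup\{0\}$ to its strict form, which is exactly where the choice $\cK=\Ran(B)$ and the self-adjointness of $B$ are used.
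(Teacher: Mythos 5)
Your proposal is correct and follows essentially the same route as the paper: reduce via the Birman--Schwinger identity of Lemma~\ref{BS} to the eigenvalues of the $N\times N$ matrix $m(\lam)$, use the (strict) Herglotz property from Lemma~\ref{mHerglotz} to place those eigenvalues in $\C_+$, and count. The only cosmetic difference is that the paper additionally derives $\Im(\gam)>0$ directly from the eigenvalue equation $\la Af,f\ra+\gam\la Bf,f\ra=\lam\la f,f\ra$, whereas you obtain it from the map $\mu\mapsto-1/\mu$ preserving $\C_+$; both are valid.
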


\begin{proof}
If $f\in \Dom(A)$, $f\not=0$ and $A_\gam f=\lam f$ then
\[
\la Af,f\ra+\gam \la Bf,f\ra=\lam \la f,f\ra.
\]
This implies that
\[
\Im(\gam) \la Bf,f\ra=\Im(\lam)\la f,f\ra.
\]
Since the right hand side is positive, we deduce that $\la Bf,f\ra>0$ and $\Im(\gam)>0$.

Lemma~\ref{BS} states that $\lam$ is an eigenvalue of $A_\gam$ if and only if $-1/\gam$ is an eigenvalue of the $N\times N$ matrix $m(\lam)$. The final statement of Lemma~\ref{mHerglotz} implies that every eigenvalue of $m(\lam)$ lies in $\C_+$. This proves that there are at least $1$ and at most $N$ distinct values of $\gam$, each of which lies in $\C_+$.
\end{proof}

The equation (\ref{mQformula}) below is a special case of the Nevanlinna-Riesz-Herglotz representation of operator-valued Herglotz functions; see \cite{GKMT}.

\begin{lemma}\label{Hergrep}
Under the assumptions of Lemma~\ref{mHerglotz}, let $P(E)$ denote the spectral projection of $A$ associated with any Borel subset $E\subseteq \R$. If
\[
Q(E)=\left( B^{1/2}P(E)B^{1/2}\right)^\nat
\]
then $Q$ is a finite, non-negative, countably additive, $\cL(\cK)$-valued measure on $\R$ and
\begin{equation}
m(\lam)=\int_\R \frac{1}{s-\lam} Q(\rmd s)\label{mQformula}
\end{equation}
for all $\lam\in \C_+$.
\end{lemma}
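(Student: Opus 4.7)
The plan is to separate the statement into two parts: first, the measure-theoretic properties of $Q$, and second, the integral representation (\ref{mQformula}). Both are routine consequences of the scalar spectral theorem applied to $A$, combined with the sandwich construction $B^{1/2}\,\cdot\, B^{1/2}$ and truncation to $\cK$.

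First I would verify that $Q$ is a finite, non-negative, countably additive $\cL(\cK)$-valued measure. For non-negativity, note that $P(E)=P(E)^\ast=P(E)^2\geq 0$, so $B^{1/2}P(E)B^{1/2}\geq 0$ on $\cH$; truncation to $\cK$ preserves this by (the obvious one-sided analogue of) Lemma~\ref{trunkspec}. Finiteness follows from $Q(\R)=(B^{1/2}\,I\,B^{1/2})^\nat = B^\nat$, which is bounded with $\|Q(\R)\|\le \|B\|$. Countable additivity in the weak operator topology follows from the $\sigma$-additivity of the spectral projection $P$: if $E=\bigsqcup_n E_n$ then $P(E)h=\sum_n P(E_n)h$ strongly for every $h\in\cH$, so applying this with $h=B^{1/2}f$ and then pairing with $B^{1/2}g$ gives $\la Q(E)f,g\ra = \sum_n \la Q(E_n)f,g\ra$ for all $f,g\in\cK$. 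Since the summands are non-negative when $f=g$, the series also converges in the strong operator topology.

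Second I would derive (\ref{mQformula}) by reducing to scalar spectral measures. Fix $\lam\in\C_+$ and $f\in\cK$, and set $g=B^{1/2}f\in\cH$. By the definition (\ref{mHdef}) and the scalar spectral theorem,
\[
\la m(\lam)f,f\ra_\cK
= \la (A-\lam I)^{-1}g,g\ra_\cH
= \int_\R \frac{1}{s-\lam}\, d\la P(s)g,g\ra
= \int_\R \frac{1}{s-\lam}\, d\la Q(s)f,f\ra_\cK,
\]
where the last equality uses $\la P(E)B^{1/2}f,B^{1/2}f\ra_\cH = \la B^{1/2}P(E)B^{1/2}f,f\ra_\cH = \la Q(E)f,f\ra_\cK$. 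Polarization then upgrades this diagonal identity to $\la m(\lam)f,g\ra_\cK = \int_\R (s-\lam)^{-1}\, d\la Q(s)f,g\ra$ for all $f,g\in\cK$, which is precisely (\ref{mQformula}) interpreted in the weak operator sense; equivalently, since the function $s\mapsto (s-\lam)^{-1}$ is bounded on $\R$ and the total variation of $Q$ is finite, the integral converges in operator norm.

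I do not expect a serious obstacle. The only point requiring a word of care is specifying the topology in which the operator-valued integral and the countable sum are understood; once this is fixed as the weak operator topology (which for non-negative sums coincides with strong convergence), every step is an immediate consequence of the scalar spectral theorem for $A$.
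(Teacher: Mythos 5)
Your proof is correct and follows the same route as the paper: the paper's entire argument is to cite the spectral-theorem representation $(A-\lam I)^{-1}=\int_\R (s-\lam)^{-1}P(\rmd s)$ and observe that \eqref{mQformula} follows by sandwiching with $B^{1/2}$ and truncating to $\cK$. You have merely written out the routine verifications (positivity, finiteness, countable additivity, and the passage from scalar spectral measures to the operator-valued identity via polarization) that the paper leaves implicit.
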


\begin{proof}
The formula (\ref{mQformula}) follows directly from
\[
(A-\lam I)^{-1}=\int_\R \frac{1}{s-\lam} P(\rmd s),
\]
which is proved using the spectral theorem.
\end{proof}

The following lemma is well-known, but we include a proof for completeness.

\begin{lemma}\label{Qintbound}
If $f:[a,b]\to\C$ is bounded and measurable then
\[
\norm \int_a^b f(s)\, Q(\rmd s)\norm \leq \norm f\norm_\infty\norm Q([a,b])\norm.
\]
\end{lemma}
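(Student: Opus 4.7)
The plan is to prove the bound first for simple functions, where both sides of the inequality are explicit, and then extend to bounded measurable $f$ by uniform approximation. The main work is the simple-function case; the extension is automatic because simple functions approximate bounded measurable functions uniformly on a bounded interval.

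For a simple function $f=\sum_{i=1}^n c_i \chi_{E_i}$ with $E_i$ disjoint measurable subsets of $[a,b]$, one has $\int_a^b f\,Q(\rmd s)=\sum_i c_i Q(E_i)$. To bound the operator norm I would test against unit vectors $u,v\in\cK$. Since $Q$ is non-negative and countably additive, the map $(u,v)\mapsto\la Q(E)u,v\ra$ is a positive semidefinite sesquilinear form for each measurable $E$, so Cauchy--Schwarz gives
\[
|\la Q(E_i)u,v\ra|\leq \la Q(E_i)u,u\ra^{1/2}\la Q(E_i)v,v\ra^{1/2}.
\]
Summing and applying Cauchy--Schwarz a second time (for finite sums) yields
\[
\Bigl|\la \textstyle\int_a^b f\,Q(\rmd s)\,u,v\ra\Bigr|\leq \norm f\norm_\infty \la Q([a,b])u,u\ra^{1/2}\la Q([a,b])v,v\ra^{1/2}\leq \norm f\norm_\infty \norm Q([a,b])\norm ,
\]
where the last step uses non-negativity of $Q([a,b])$ to bound each factor by $\norm Q([a,b])\norm^{1/2}$. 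Taking the supremum over $u,v$ of unit norm establishes the inequality in the simple case.

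For general bounded measurable $f:[a,b]\to\C$ with $|f|\leq M$, I would approximate by simple functions $f_n$ with $\norm f_n-f\norm_\infty\to 0$; this is possible by partitioning the square $[-M,M]^2\subset\C$ into small cells and letting $f_n$ take the center value of the cell containing $f(s)$. Applying the simple-function bound to $f_n-f_m$ shows that $\int_a^b f_n\,Q(\rmd s)$ is norm-Cauchy in $\cL(\cK)$, hence converges to an operator which defines $\int_a^b f\,Q(\rmd s)$; the bound passes to the limit since $\norm f_n\norm_\infty\to \norm f\norm_\infty$.

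The only subtle point is ensuring that the complex-valued integrand does not introduce a factor of $2$ or $4$ (as would happen if one decomposed $f$ into real and imaginary, positive and negative parts and used the scalar spectral theorem on each piece separately); the Cauchy--Schwarz argument above avoids this loss because it treats $Q$ directly as a positive operator-valued measure rather than reducing to scalar measures via $\la Q(\cdot)v,v\ra$.
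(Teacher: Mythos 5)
Your proof is correct, but it takes a different route from the paper's. The paper exploits the specific representation $Q(E)=\left(B^{1/2}P(E)B^{1/2}\right)^\nat$ from Lemma~\ref{Hergrep}: it rewrites $\la \int f\,Q(\rmd s)\phi,\psi\ra$ as $\la f(A)P([a,b])B^{1/2}\phi, P([a,b])B^{1/2}\psi\ra$ and invokes the functional-calculus bound $\norm f(A)\norm\leq\norm f\norm_\infty$, after which $\norm P([a,b])B^{1/2}\phi\norm^2=\la Q([a,b])\phi,\phi\ra$ closes the argument. You instead treat $Q$ axiomatically as a finite non-negative operator-valued measure and prove the bound by a double Cauchy--Schwarz (first for the positive form $\la Q(E_i)u,v\ra$, then for the finite sum), followed by uniform approximation by simple functions; both proofs converge to the same final estimate $\norm f\norm_\infty\la Q([a,b])\phi,\phi\ra^{1/2}\la Q([a,b])\psi,\psi\ra^{1/2}$. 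The paper's argument is shorter and gets the integral's existence for free from the functional calculus, but it is tied to the spectral representation of $Q$. Your version buys genuine extra generality: Theorem~\ref{trunc2} is stated for an arbitrary finite, non-negative, countably additive $\cL(\cK)$-valued measure, which is exactly the hypothesis your proof uses and slightly more than the paper's proof literally covers (one would otherwise appeal to a Naimark dilation to reduce to the spectral case). Your closing remark about avoiding a spurious factor of $2$ or $4$ from decomposing $f$ into real and imaginary parts is also well taken.
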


\begin{proof}
If $\phi,\, \psi \in \cK$, we have
\begin{eqnarray*}
|\la \int_a^b f(s)\,  Q(\rmd s)\phi, \psi\ra |
&=& |\la \int_a^b f(s)\,  P(\rmd s)(B^{1/2}\phi), (B^{1/2}\psi)\ra |\\
&=&|\la f(A)P([a,b])(B^{1/2}\phi), %
(B^{1/2}\psi)\ra |\\
&=&|\la f(A)(P([a,b])B^{1/2}\phi), %
(P([a,b])B^{1/2}\psi)\ra |\\
&\leq & \norm f\norm_\infty\norm P([a,b])B^{1/2}\phi\norm\,\norm P([a,b])B^{1/2}\psi\norm\\
&=& \norm f\norm_\infty %
\la Q([a,b])\phi,\phi\ra^{1/2} %
\la Q([a,b])\psi,\psi\ra^{1/2}\\
&\leq & \norm f\norm_\infty %
\norm Q([a,b])\norm\, \norm \phi\norm\, \norm \psi\norm.
\end{eqnarray*}
The lemma follows.
\end{proof}

Our next lemma defines two operators that will be used in Theorem~\ref{trunc2}.

\begin{lemma}\label{XYdef}
Let $[a,b]\subset \R$. Then there exist bounded, self-adjoint operators $X,\, Y:\cK\to\cK$ such that
\begin{eqnarray}
X&=&\int_a^b Q(\rmd s), \label{Xdef}\\
X^{1/2}YX^{1/2} &=& \int_a^b s Q(\rmd s).\label{Ydef}
\end{eqnarray}
Moreover $0\leq X\leq B^\nat$ and $aI\leq Y\leq bI$.
\end{lemma}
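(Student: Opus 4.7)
The plan is to take $X$ to be the total mass $Q([a,b])$ of the measure restricted to $[a,b]$, and then reconstruct $Y$ from the first-moment integral $Z:=\int_a^b s\,Q(\rmd s)$ by a quadratic form argument modelled on the proof of Lemma~\ref{sectoriallemma}.

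First I would set $X=Q([a,b])$, which is bounded and self-adjoint because $Q$ is a self-adjoint, non-negative, countably additive $\cL(\cK)$-valued measure. Non-negativity of $Q$ gives $0\leq X$, and since $Q(\R)=(B^{1/2}P(\R)B^{1/2})^\nat = B^\nat$ we also have $X=Q([a,b])\leq Q(\R)=B^\nat$, which delivers the two bounds for $X$. Next I define $Z:=\int_a^b s\,Q(\rmd s)$, which is bounded and self-adjoint, and observe the key two-sided operator bound
\[
aX \;\leq\; Z \;\leq\; bX,
\]
obtained by applying $Q$ to the pointwise inequalities $a\leq s\leq b$ on $[a,b]$ (equivalently, $Z-aX=\int_a^b(s-a)\,Q(\rmd s)\geq 0$ and analogously for $bX-Z$).

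The core step is to produce $Y$. Let $\cK_0=\Ker(X)$ and $\cK_1=\cK_0^\perp=\overline{\Ran(X^{1/2})}$. Using $0\leq Z-aX\leq (b-a)X$ together with the Cauchy--Schwarz inequality for the non-negative form $\la(Z-aX)\cdot,\cdot\ra$, I deduce $\Ker(X)\subseteq \Ker(Z)$, so $Z$ vanishes on $\cK_0$ and leaves $\cK_1$ invariant. Since $X^{1/2}$ is injective on $\cK_1$, I may define a sesquilinear form on $\Ran(X^{1/2}|_{\cK_1})$ by
\[
q(X^{1/2}\phi,X^{1/2}\psi):=\la Z\phi,\psi\ra,\qquad \phi,\psi\in\cK_1;
\]
well-definedness is immediate from the injectivity of $X^{1/2}|_{\cK_1}$. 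The sandwich $aX\leq Z\leq bX$ translates into $a\norm u\norm^2\leq q(u,u)\leq b\norm u\norm^2$ on $\Ran(X^{1/2}|_{\cK_1})$, which is dense in $\cK_1$. Therefore $q$ extends by continuity to a bounded Hermitian form on $\cK_1$, and the Riesz representation theorem yields a bounded self-adjoint operator $Y_1$ on $\cK_1$ with $aI_{\cK_1}\leq Y_1\leq bI_{\cK_1}$.

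Finally I define $Y:=cI_{\cK_0}\oplus Y_1$ on $\cK=\cK_0\oplus\cK_1$ for any $c\in[a,b]$; this clearly satisfies $aI\leq Y\leq bI$. To verify $X^{1/2}YX^{1/2}=Z$, I note that $X^{1/2}$ annihilates $\cK_0$ and preserves $\cK_1$, so both sides vanish on $\cK_0$, while on $\cK_1$ the identity $\la X^{1/2}Y_1X^{1/2}\phi,\psi\ra=\la Z\phi,\psi\ra$ is exactly the definition of $q$. The main obstacle is the quadratic-form construction of $Y_1$, since one must handle the possible non-triviality of $\Ker(X)$ and confirm that the form extends continuously; but this is controlled entirely by the two-sided bound $aX\leq Z\leq bX$ established in the first part.
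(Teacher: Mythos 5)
Your proof is correct and follows essentially the same route as the paper: set $X=Q([a,b])$, establish $aX\leq Z\leq bX$ for $Z=\int_a^b s\,Q(\rmd s)$, and define $Y$ as $X^{-1/2}ZX^{-1/2}$ via a bounded quadratic form on the closure of $\Ran(X^{1/2})$. The only difference is that you spell out the degenerate case $\Ker(X)\neq\{0\}$ in full, whereas the paper simply refers back to the analogous argument in Lemma~\ref{sectoriallemma}.
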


\begin{proof}
Since $X=Q([a,b])$, the inequalities for $X$ and the boundedness of $X$ follow from
\[
0\leq \la Q([a,b])f,f\ra \leq \la Q(\R)f,f\ra=
\la B^\nat f,f\ra\leq\norm B^\nat\norm \, \norm f\norm^2,
\]
valid for all $f\in\cK$. If one defines
\[
Z=\int_a^b s Q(\rmd s)
\]
then $aX \leq Z\leq bX$. If $X$ is invertible this immediately implies that
\[
aI\leq Y=X^{-1/2} ZX^{-1/2}\leq bI.
\]
The general case follows as in Lemma~\ref{sectoriallemma}.
\end{proof}

Determining the eigenvalues of $m(\lam)$ for a given range of values of $\lam$ can sometimes be aided by writing
\[
m(\lam)=m_1(\lam)+m_2(\lam)
\]
where $m_1$ may be computed more readily than $m$
and $m_2(\lam)$ can be neglected or replaced by an appropriate approximation for the selected range of values of $\lam$. Theorem~\ref{trunc2} enables one to replace the contribution of an interval $[a,b]$ to $m(\lam)$ by a single operator provided $\lam$ is far enough away from $[a,b]$.

\begin{theorem}\label{trunc2}
Let
\[
m(\lam)=\int_\R \frac{1}{s-\lam}\, Q(\rmd s),
\]
where $\lam\in\C_+$ and $Q$ is a finite, non-negative, countably additive, $\cL(\cK)$-valued measure on $\R$. Given $[a,b]\subset \R$ and $L>0$, let
\begin{equation}
\tilde{m}(\lam)=\int_{s\notin[a,b] } \frac{1}{s-\lam}\, Q(\rmd s)+X^{1/2}(Y-\lam I)^{-1}X^{1/2},\label{mtildedef}
\end{equation}
where $X$ and $Y$ are as defined in Lemma~\ref{XYdef}. Then
\[
|m(\lam)-\tilde{m}(\lam)|\leq %
\frac{2(b-a)^2}{L^3}\norm Q([a,b])\norm
\]
for all $\lam\in \C_+$ such that $\dist(\lam, [a,b])\geq L$.
\end{theorem}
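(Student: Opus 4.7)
The plan is to reduce the claim to a single comparison and then perform a Taylor-type expansion around a base point $s_0\in[a,b]$.

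First, by the definitions of $m(\lam)$ and $\tilde{m}(\lam)$ we have
\[ m(\lam)-\tilde{m}(\lam)=\int_a^b\frac{1}{s-\lam}\,Q(\rmd s)-X^{1/2}(Y-\lam I)^{-1}X^{1/2}, \]
so the contribution of $s\notin[a,b]$ drops out. The tool I will use is a two-term Taylor-type identity, applied simultaneously at the scalar level and to the self-adjoint operator $Y$. Iterating the elementary equality $\frac{1}{s-\lam}-\frac{1}{s_0-\lam}=\frac{s_0-s}{(s-\lam)(s_0-\lam)}$ once gives
\[ \frac{1}{s-\lam}=\frac{1}{s_0-\lam}-\frac{s-s_0}{(s_0-\lam)^2}+\frac{(s-s_0)^2}{(s-\lam)(s_0-\lam)^2}, \]
and the identical algebraic manipulation (valid because the operators involved are all functions of $Y$ and therefore commute) produces
\[ (Y-\lam I)^{-1}=\frac{I}{s_0-\lam}-\frac{Y-s_0 I}{(s_0-\lam)^2}+\frac{(Y-s_0 I)^2(Y-\lam I)^{-1}}{(s_0-\lam)^2}. \]

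Next I would integrate the scalar identity against $Q$ on $[a,b]$ and sandwich the operator identity by $X^{1/2}$ on both sides. Lemma~\ref{XYdef} records that $\int_a^b Q(\rmd s)=X$ and $\int_a^b s\,Q(\rmd s)=X^{1/2}YX^{1/2}$, so the first two terms on the two sides match and cancel in the difference, leaving
\[ m(\lam)-\tilde{m}(\lam)=\frac{1}{(s_0-\lam)^2}\left[\int_a^b\frac{(s-s_0)^2}{s-\lam}\,Q(\rmd s)-X^{1/2}(Y-s_0 I)^2(Y-\lam I)^{-1}X^{1/2}\right]. \]
This exact cancellation at orders zero and one is the crucial algebraic point: it works precisely because the zeroth and first moments of $Q$ on $[a,b]$ coincide with the corresponding operator quantities built from $X$ and $Y$, by construction. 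Higher moments of $Q$ on $[a,b]$ do not in general equal the corresponding operator expressions, but they appear only inside the remainder, so there only norm bounds are needed.

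Finally, set $s_0=a$. Then $|s_0-\lam|\ge L$, and for $s\in[a,b]$ one has $|s-s_0|\le b-a$ and $|s-\lam|\ge L$. By Lemma~\ref{Qintbound} the integral remainder is bounded in norm by $\frac{(b-a)^2}{L}\norm Q([a,b])\norm$. For the operator remainder, $aI\le Y\le bI$ gives $\norm Y-aI\norm\le b-a$; the spectral theorem applied to $Y$ self-adjoint with $\Spec(Y)\subseteq[a,b]$ gives $\norm(Y-\lam I)^{-1}\norm\le 1/\dist(\lam,[a,b])\le 1/L$; and $\norm X\norm=\norm Q([a,b])\norm$. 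These combine to bound that term by $\frac{(b-a)^2}{L}\norm Q([a,b])\norm$ as well. Adding the two contributions and dividing by $|s_0-\lam|^2\ge L^2$ yields the stated $\frac{2(b-a)^2}{L^3}\norm Q([a,b])\norm$. The main obstacle, such as it is, is arranging the two identities so that the low-order terms cancel exactly between scalar and operator forms via Lemma~\ref{XYdef}; once that is in place, the rest is routine norm estimation.
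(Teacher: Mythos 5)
Your proposal is correct and follows essentially the same route as the paper: the same second-order Taylor-type identity for $\frac{1}{s-\lam}$ (the paper expands about $s_0=b$ rather than $a$, which is immaterial), the matching operator identity for $(Y-\lam I)^{-1}$, exact cancellation of the zeroth and first order terms via $\int_a^b Q(\rmd s)=X$ and $\int_a^b s\,Q(\rmd s)=X^{1/2}YX^{1/2}$, and then Lemma~\ref{Qintbound} plus the spectral theorem for $Y$ to bound each remainder by $\frac{(b-a)^2}{L^3}\norm Q([a,b])\norm$. The only cosmetic difference is that the paper adds the same correction terms to both quantities and applies the triangle inequality, whereas you cancel them directly in the difference.
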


\begin{proof}
It suffices to prove that
\begin{equation}
\norm \int_a^b \frac{1}{s-\lam}\, Q(\rmd s)-X^{1/2}(Y-\lam I)^{-1}X^{1/2}\norm%
\leq  \frac{2(b-a)^2}{L^3}\norm Q([a,b])\norm \label{correct0}
\end{equation}
for all $\lam$ satisfying the stated conditions.

We first observe that
\begin{equation}
\frac{1}{s-\lam}+\frac{1}{\lam-b}%
+\frac{s-b}{(\lam-b)^2}=%
\frac{(s-b)^2}{(s-\lam)(\lam-b)^2}.\label{correct1}
\end{equation}
Integrating both sides with respect to $Q$ over $[a,b]$ yields
\begin{eqnarray}
\lefteqn{\hspace*{-4em} \int_a^b\frac{1}{s-\lam}\, Q(\rmd s)+\frac{X}{\lam-b}%
+\frac{X^{1/2}YX^{1/2}-bX}{(\lam-b)^2}  }\nonumber\\
&=& \int_a^b\frac{(s-b)^2}{(s-\lam)(\lam-b)^2}\, Q(\rmd s),\label{correct2}
\end{eqnarray}
and then
\begin{eqnarray}
\lefteqn{\hspace*{-4em}\norm\int_a^b\frac{1}{s-\lam}\, Q(\rmd s)+\frac{X}{\lam-b}%
+X^{1/2}\frac{Y-bI}{(\lam-b)^2}X^{1/2}\norm}\nonumber\\
&\leq & \frac{(b-a)^2}{L^3} \norm Q([a,b])\norm\label{correct3}
\end{eqnarray}
by Lemma~\ref{Qintbound}.

We next use the formula
\begin{equation}
\frac{I}{Y-\lam I}+\frac{I}{\lam-b}%
+\frac{Y-bI}{(\lam-b)^2}=%
\frac{(Y-bI)^2}{(Y-\lam I)(\lam-b)^2}
\label{correct4}
\end{equation}
to obtain
\begin{eqnarray}
\lefteqn{\hspace*{-4em}\norm X^{1/2}(Y-\lam I)^{-1}X^{1/2}+\frac{X}{\lam-b}%
+X^{1/2}\frac{Y-bI}{(\lam-b)^2}X^{1/2}\norm }\nonumber \\
&=&%
\norm X^{1/2}\frac{(Y-bI)^2}{(Y-\lam I)(\lam-b)^2}X^{1/2}\norm\nonumber \\
&\leq &\frac{(b-a)^2}{L^3}\norm Q([a,b])\norm.
\label{correct5}
\end{eqnarray}
The proof of (\ref{correct0}) is completed by combining (\ref{correct3}) and (\ref{correct5}).
\end{proof}

\begin{remark}
One can obtain a better approximation than that in Theorem~\ref{trunc2} if $[a,b]$ is divided into several subintervals, each of which is used to produce an extra term in the formula (\ref{mtildedef}). The new $\tilde{m}$ is, of course, more cumbersome to use numerically.
\end{remark}

From this point onwards we assume that $A$ is a possibly unbounded self-adjoint operator and that $A_\gam=A+\gam B$ where $Bf=\la f,e\ra e$ and $e\in\cH$ is a vector with norm $1$. The Herglotz function (\ref{mHdef}) is then scalar-valued and given by the formula %
$m(\lam)=\la (A-\lam I)^{-1}e,e\ra$.

If one approximates $m$ uniformly in a chosen region by another analytic function whose zeros are more easily computed, then one can apply Rouch\'{e}'s theorem to approximate the zeros of $m$ and hence the spectrum of $A_\gam$. Lemma~\ref{rouche} is directly applicable to the polynomial $p(\gam,\lam)=\det(A_\gam-\lam I)$, where $\gam$ is fixed. The connection between this and the Herglotz function $m$ is explained in Lemma~\ref{relativedet} and (\ref{rank1det}). The proof of Lemma~\ref{rouche} can be adapted to cases in which $p$ is not a polynomial; one needs an upper bound on the orders of its zeros, a lower bound on the distances between zeros and a lower bound on $p$ for points that are not close to a zero.

\begin{lemma}\label{rouche}
Let $0<\eps<1/2$, let $U$ be a bounded open set in $\C$ and let $U_\eps=\{ z\in \C:\dist(z,U) <2\eps\}$. Let $p$ be a monic polynomial such that $|p(z)|>\eps$ for all $z\in U_\eps\backslash U$. Suppose that every root of $p$ is simple and that $|s-t|\geq 2$ for any two distinct roots of $p$. Let $q$ be an analytic function on $U_\eps$ such that $|p(z)-q(z)| <\eps$ for all $z\in U_\eps$. If $\lam\in U_\eps$ and $p(\lam)=0$ then $\lam\in U$ and there exists exactly one zero of $p$ and one zero of $q$ inside the circle $C_{\lam,\eps}=\{w\in U_\eps:|w-\lam|=\eps\}$. If $\lam\in U_\eps$ and $q(\lam)=0$ then $\lam\in U$ and there exists exactly one zero of $p$ and one zero of $q$ inside $C_{\lam,\eps}$.
\end{lemma}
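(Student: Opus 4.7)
The plan is a two-pass application of Rouch\'{e}'s theorem. First I pair each root of $p$ in $U$ with a nearby root of $q$ by local Rouch\'{e} arguments on the circles $C_{\mu,\eps}$; then I use a global count on a surrounding contour to confirm that no other roots of $q$ arise.

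First I would dispose of the location claims. If $p(\lam)=0$ with $\lam\in U_\eps$, then $\lam\notin U_\eps\setminus U$ since $|p|>\eps$ there, so $\lam\in U$. Similarly, if $q(\lam)=0$ then $|q(\lam)|\geq |p(\lam)|-|p(\lam)-q(\lam)|>\eps-\eps=0$ for $\lam\in U_\eps\setminus U$, so again $\lam\in U$.

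For the case $p(\lam)=0$, I would apply Rouch\'{e} on the circle $C_{\lam,\eps}$. The separation hypothesis $|s-t|\geq 2$ together with $\eps<1/2$ gives, for every other root $\mu$ of $p$ and every $z\in C_{\lam,\eps}$, the bound $|z-\mu|\geq |\lam-\mu|-\eps\geq 2-\eps>1$. Since $p$ is monic with simple roots we factor $p(z)=(z-\lam)\prod_{\mu\neq\lam}(z-\mu)$, so $|p(z)|\geq \eps$ on $C_{\lam,\eps}$. Combined with $|p-q|<\eps$ this yields $|p-q|<|p|$ on that circle, so $p$ and $q$ have the same number of zeros inside; the separation makes $\lam$ the only root of $p$ there, so each has exactly one.

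The harder direction is the case $q(\lam)=0$. Let $\mu_1,\ldots,\mu_N$ enumerate all roots of $p$ in $U_\eps$ (equivalently in $U$, by the location claim). The previous step supplies distinct roots $\nu_j$ of $q$ inside the pairwise disjoint disks $D_{\mu_j,\eps}$. To prove these exhaust the roots of $q$ in $U_\eps$, I would apply Rouch\'{e} once more on the boundary of the intermediate shell $V=\{z:\dist(z,U)<\eps\}$: $\partial V$ lies in $U_\eps\setminus U$, where $|p|>\eps>|p-q|$, so $p$ and $q$ have equally many zeros in $V$, namely $N$. Consequently $\lam$ must coincide with some $\nu_j$, giving $|\lam-\mu_j|<\eps$. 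Finally the separation hypothesis forces $\mu_j$ to be the unique root of $p$ in $D_{\lam,\eps}$, and it rules out any other $\nu_i$ lying within $\eps$ of $\lam$, because $|\nu_i-\lam|\geq |\mu_i-\mu_j|-|\nu_i-\mu_i|-|\mu_j-\lam|>2-2\eps>1$.

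The main obstacle I anticipate is the global counting step: one must choose a contour on which $|p|$ is controlled and on which Rouch\'{e} applies cleanly. The shell $\partial V$ is the natural choice because it sits inside the annulus $U_\eps\setminus U$ where the hypothesis provides the required lower bound on $|p|$; some mild regularity of $U$ is tacitly needed for $\partial V$ to serve as a valid contour, but the argument principle handles any reasonable case.
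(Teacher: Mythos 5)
Your handling of the location claims and of the case $p(\lam)=0$ is correct and essentially the paper's argument: the factorization of the monic $p$ over \emph{all} of its roots plus the separation hypothesis gives the lower bound $|p|\geq\eps$ on the small circle, and Rouch\'{e} does the rest (the paper runs this on $C_{\lam,2\eps}$ and remarks that the same works on $C_{\lam,\eps}$).

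For the case $q(\lam)=0$ you take a genuinely different and heavier route, and it contains the one real gap. Your global count invokes Rouch\'{e} on $\partial V$ with $V=\{z:\dist(z,U)<\eps\}$, but $U$ is only assumed to be a bounded open set: $\partial V$ need not be a finite union of rectifiable Jordan curves, and ``the number of zeros of $q$ in $V$'' is not given by a contour integral without further justification. Making this rigorous requires the homological form of the argument principle --- a cycle in $U_\eps\setminus\overline{U}$ with winding number $1$ about every point of $\overline{U}$ and $0$ about every point outside $U_\eps$ --- which is far more machinery than the lemma needs. The paper avoids any global count: since $q(\lam)=0$ forces $|p(\lam)|<\eps$, the factorization $p(\lam)=\prod_{t\in T}(\lam-t)$ over all roots together with the separation $|s-t|\geq 2$ shows that at most one factor has modulus $<1$, and at least one must (else the product would be $\geq 1>\eps$); the product of the remaining factors then exceeds $1$, so the distinguished root $t$ satisfies $|\lam-t|<|p(\lam)|<\eps$. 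One then reruns the first-case Rouch\'{e} argument on $C_{t,2\eps}$, whose interior contains the disc $|w-\lam|<\eps$, giving at most (hence exactly) one zero of each of $p$ and $q$ inside $C_{\lam,\eps}$. Replacing your global step with this local estimate closes the gap; the rest of your argument is sound.
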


\begin{proof}
The assumptions of the lemma imply immediately that neither $p$ nor $q$ can vanish in $U_\eps\backslash U$. Suppose that $\lam\in U$ and $p(\lam)=0$. Then $|s-\lam|\geq 2$ for all $s$ in the finite set $S$ of roots of $p$ that are not equal to $\lam$. If $w\in C_{\lam,2\eps}$ then $|w-s|>1$ for all $s\in S$. Therefore
\[
|p(w)|=\left|(w-\lam)\prod_{s\in S}(w-s)\right|> |w-\lam|=2\eps.
\]
Since $C_{\lam,2\eps}$ and its interior are contained in $U_\eps$, we may apply Rouch\'{e}'s theorem to $p$ and $q$ on and inside $C_{\lam,2\eps}$, and deduce that $q$ has exactly one zero inside $C_{\lam,2\eps}$. The same argument evidently applies to $C_{\lam,\eps}$.

On the other hand if $\lam\in U$ and $q(\lam)=0$ then $|p(\lam)|<\eps$. If $T$ is the set of all roots of $p$ then
\[
\eps>|p(\lam)|=\left|\prod_{t\in T}(\lam-t)\right|
\]
so $|\lam-t|<1$ for at least one root of $p$; from this point onwards we use the symbol $t$ to refer to one such root. If $S=T\backslash \{t\}$ then $|\lam-s|> 1$ for all $s\in S$, so $t$ is unique. Therefore
\[
\eps>|p(\lam)|=\left|(\lam-t)\prod_{s\in S}(\lam-s)\right|> |\lam-t|.
\]
Repeating the first paragraph of the proof with $\lam$ replaced by $t$, both $p$ and $q$ have exactly one root inside the circle $C_{t,2\eps}$. This contains the region inside $C_{\lam,\eps}$, so both $p$ and $q$ have at most one root inside $C_{\lam,\eps}$. The proof is concluded by noting that we have already observed that they have at least one root inside $C_{\lam,\eps}$.
\end{proof}

\begin{example}\label{trunc3}
Given positive integers $M_1$, $M_2$ and $L$, let $N=M_1+M_2$ and let $A$ be the diagonal $N\times N$ matrix with entries
\begin{eqnarray*}
A_{r,r}&=&(r-1)/(M_1-1),\\
A_{M_1+s,M_1+s}&=&L+1+(s-1)/(M_2-1),
\end{eqnarray*}
for $1\leq r\leq M_1$ and $1\leq s\leq M_2$, so that $\Spec(A)\subset [0,1]\cup [L+1,L+2]$. Also let $A_\gam=A+\gam B$ where $B$ is the rank one operator associated with the unit vector $e\in \C^N$ defined by $e_r=(2M_1)^{-1/2}$ for $1\leq r\leq M_1$ and $e_{M_1+s}=(2M_2)^{-1/2}$ for $1\leq s\leq M_2$. One sees immediately that $\norm e\norm =1$ in $\C^N$.

The continuous curves in Figure~\ref{specgapfig} show parts of six of the spectral curves of $A_\gam$ for $\gam=t\rme^{i\theta}$ when $M_1=5$, $M_2=25$, $L=4$, $0\leq t<20$ and $\theta=89^\circ$. Most of the curves stay within a small distance of their starting point as $t$ increases. The curve starting at the eigenvalue $0.5$ of the $30\times 30$ matrix $A$ moves rapidly away from the real axis as $t$ increases but eventually converges to $3$. There is only one curve that diverges to $\infty$ as $t\to\infty$, and a part of this appears in the top right-hand part of the figure.

The dashed curves in Figure~\ref{specgapfig} are produced in a similar manner but with $M_1=5$ and $M_2=1$, so that $\tA$ is a $6\times 6$ matrix. Following the prescription of Theorem~\ref{trunc2}, we define $\tA_{r,r}$ as above for $1\leq r\leq 5$, but put $\tA_{6,6}=5.5$, so that the function $\widetilde{m}$ of Theorem~\ref{trunc2} is the Herglotz function for the pair $\tA$, $\widetilde{e}$, where $\widetilde{e}_r=(10)^{-1/2}$ for $1\leq r\leq 5$ and $\widetilde{e}_6=2^{-1/2}$. In spite of the substantial reduction in the size of the matrix, the part of the spectrum in $\{\lam:\Re(\lam)\leq 2\}$ is almost unchanged, as predicted by Theorem~\ref{trunc2}.
\end{example}

\begin{figure}[h!]
\begin{center}
\draft{\resizebox{15cm}{!}{\mbox{\rotatebox{0}
{\includegraphics{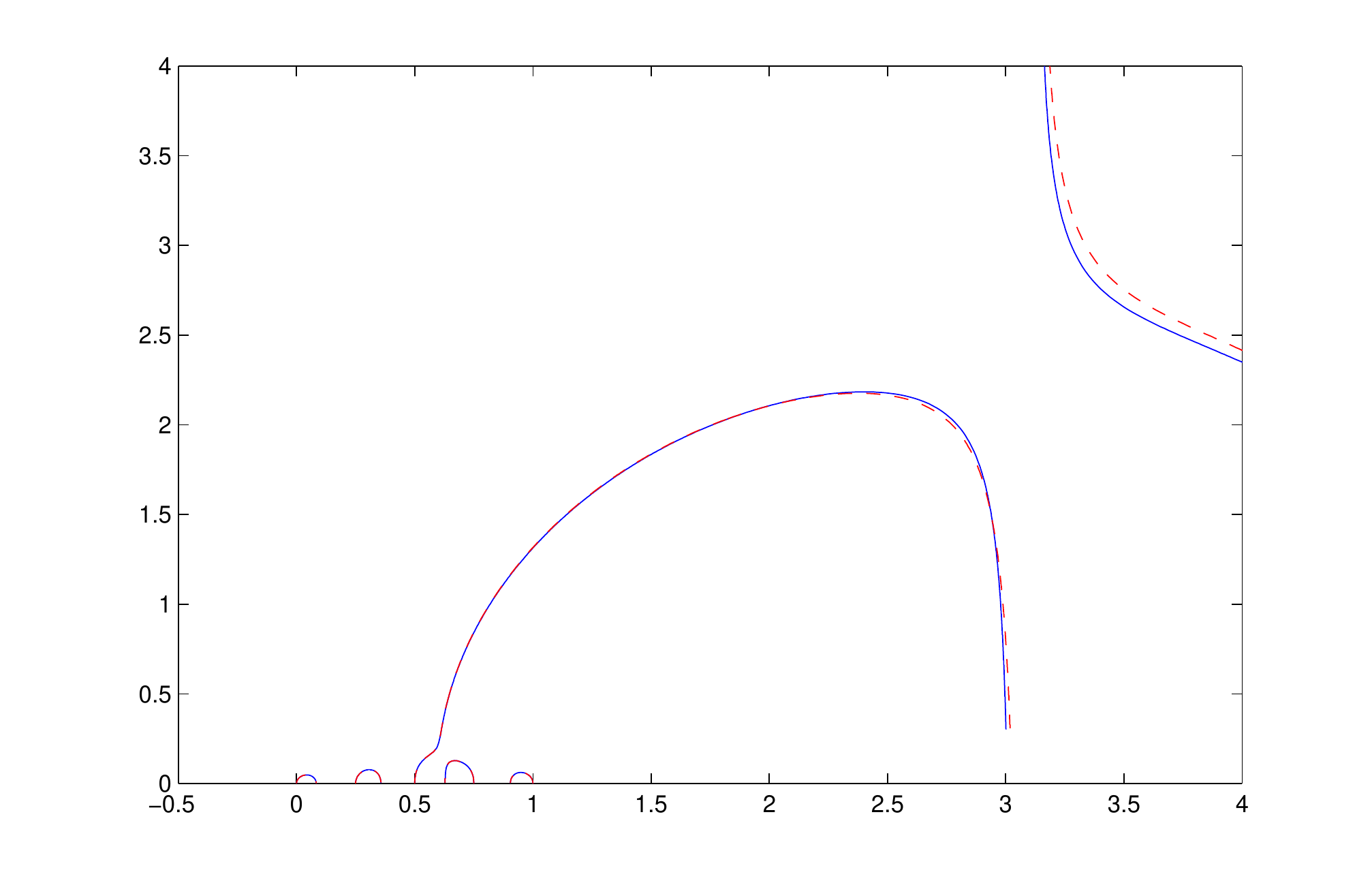}}}}}
\end{center}
\caption{Spectral curves in Example~\ref{trunc3}}\label{specgapfig}
\end{figure}

\section{Rank one perturbations}\label{rank1case}

In this section we obtain more detailed results of the type already considered under the assumptions that $A$ is a self-adjoint operator acting on the Hilbert space $\cH$ and that
$Bf =\la f ,e\ra e$ for all $f \in \cH$, where $e$ is a unit vector in $\cH$.

We define $A_\gam$ on $\cH$ by
\begin{equation}
A_\gam  f  =A  f +\gam \la  f ,e\ra e\label{Agamdef}
\end{equation}
where $\gam\in\C$. We summarize a few of the many results known in
the case $\gam\in\R$ and then consider non-real $\gam$, for which
new issues arise. We also assume that $e$ is a
cyclic vector for $A$ in the sense that $\norm
e\norm=1$ and $\cL=\lin\{A^ne:n=0,1,\ldots\}$ is dense in $\cH$. This is equivalent to $B$ being cyclic for $A$ by Corollary~\ref{cycliccor}.

The four propositions below provide the general context within which our more detailed results are proved. The first is classical and may be found in \cite{Simon}.

\begin{proposition}\label{theoremA}
Let $e$ be a cyclic vector for the bounded self-adjoint operator $A$
and let $A_\gam$ be defined by (\ref{Agamdef}). If $\gam\in\R$ then
every eigenvalue of $A_\gam$ has multiplicity one. If $\alp\in\R$
and $\lam_{\alp}$ is an isolated eigenvalue of $A_{\alp}$, then
$\lam_\alp$ can be analytically continued to all real $\gam$ that
are close enough to $\alp$ and $\lam_\gam^\pr>0$ for all such
$\gam$.
\end{proposition}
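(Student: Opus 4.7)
The plan is to tackle the three assertions in order: simplicity of eigenvalues, analyticity in $\gam$, and positivity of the derivative.

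\textbf{Simplicity.} For $\gam\in\R$ the operator $B$ is self-adjoint, so $A_\gam$ is self-adjoint and its geometric and algebraic multiplicities coincide. Hence it suffices to rule out two linearly independent eigenvectors at a common eigenvalue $\lam$. Suppose $A_\gam f=\lam f$, i.e.\ $(\lam I-A)f=\gam\la f,e\ra e$. The first step is to show $\la f,e\ra\neq 0$: if $\la f,e\ra=0$ then $Af=\lam f$, so for every $n\geq 0$,
\[
\la A^ne,f\ra=\la e,A^nf\ra=\lam^n\la e,f\ra=0,
\]
and cyclicity of $e$ forces $f=0$. Consequently $\lam\notin\Spec(A)$ (otherwise an eigenvector of $A_\gam$ at $\lam$ would have to be in $\Ker(\lam I-A)$, hence orthogonal to $e$), and the eigenspace is spanned by the single vector $\gam(\lam I-A)^{-1}e$. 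This gives geometric multiplicity one.

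\textbf{Analytic continuation.} Given an isolated eigenvalue $\lam_\alp$ of $A_\alp$ of multiplicity one, the standard Kato perturbation theory for self-adjoint operators (\cite[Chapter~VII]{Kato}) produces an analytic family of Riesz projections $P_\gam$ for $\gam$ in a complex neighborhood of $\alp$, of constant rank one, and hence an analytic eigenvalue branch $\lam_\gam=\tr(A_\gam P_\gam)/\tr(P_\gam)$. Restricting $\gam$ to the real axis gives the required continuation, and simplicity is preserved along the branch by the argument of the previous paragraph.

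\textbf{Monotonicity.} With $f_\gam$ a real-analytic choice of normalized eigenvector for $\lam_\gam$, the Hellmann--Feynman calculation—differentiate $A_\gam f_\gam=\lam_\gam f_\gam$, pair with $f_\gam$, and use $\la f_\gam,f_\gam\ra=1$ together with self-adjointness of $A_\gam$—gives
\[
\lam_\gam^\pr=\la Bf_\gam,f_\gam\ra=|\la f_\gam,e\ra|^2.
\]
The simplicity argument already showed $\la f_\gam,e\ra\neq 0$, so $\lam_\gam^\pr>0$. The main obstacle here is purely cosmetic: verifying that the cyclicity hypothesis rules out $\la f_\gam,e\ra=0$, since once that is in hand every other step is standard self-adjoint perturbation theory.
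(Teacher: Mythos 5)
The paper gives no proof of Proposition~\ref{theoremA}; it is labelled as classical and referred to \cite{Simon}. Your self-contained argument follows the standard route, and its three ingredients are all sound: the cyclicity computation $\la A^n e,f\ra=\lam^n\la e,f\ra$ showing that no eigenvector of $A_\gam$ can be orthogonal to $e$, the Riesz-projection continuation of a simple isolated eigenvalue of the self-adjoint analytic family $\gam\mapsto A_\gam$, and the Hellmann--Feynman identity $\lam_\gam^\pr=|\la f_\gam,e\ra|^2$, which is strictly positive precisely because of the first step.

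One step in the simplicity argument is stated too strongly. In infinite dimensions the claim ``consequently $\lam\notin\Spec(A)$'' is false: a self-adjoint rank one perturbation can produce an eigenvalue of $A_\gam$ embedded in the continuous spectrum of $A$, so $(\lam I-A)^{-1}$ need not exist as a bounded operator. Moreover your parenthetical justification is not a valid implication, and its last clause points the wrong way: when $e$ is cyclic, an eigenvector of $A$ is \emph{never} orthogonal to $e$ (orthogonality to $e$ plus $Af=\lam f$ forces $f=0$, as you showed). What you actually need is only that $\lam I-A$ is injective, i.e.\ that $\lam$ is not an \emph{eigenvalue} of $A$, and this does follow from your computation: if $g\in\Ker(\lam I-A)$, pairing $(\lam I-A)f=\gam\la f,e\ra e$ with $g$ gives $\gam\la f,e\ra\la e,g\ra=0$, hence $\la e,g\ra=0$, hence $g=0$ by cyclicity; then $f$ is the unique preimage of $\gam\la f,e\ra e$ under the injective map $\lam I-A$, so the eigenspace is one-dimensional. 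Alternatively one can bypass the resolvent entirely: given two eigenvectors $f_1,f_2$ of $A_\gam$ at $\lam$, the combination $\la f_2,e\ra f_1-\la f_1,e\ra f_2$ is an eigenvector orthogonal to $e$ and is therefore zero. With that repair the proof is complete.
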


\begin{proposition}\label{lemmaB}
If $\lam\in \C\backslash \Spec(A)$ and $\gam\in\C$ then $\lam$ is
an eigenvalue of $A_\gam$ if and only if
\begin{eq}
1+\gam\la (A-\lam I)^{-1}e,e\ra=0.\label{gamlamrel}
\end{eq}
This formula defines $\gam$ as an analytic function of $\lam\in \C\backslash \Spec(A)$;
one has $\gam(\lam)=-1/m(\lam)$, where
\begin{equation}
m(\lam)=\la (A-\lam I)^{-1}e,e\ra.\label{mfunction}
\end{equation}
\end{proposition}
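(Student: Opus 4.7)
The plan is to prove the equivalence in both directions by direct manipulation of the eigenvalue equation, then recall that the resolvent matrix element $m(\lam)$ is analytic on $\C\backslash \Spec(A)$.

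For the forward direction, I would start with a nonzero $f\in\cH$ satisfying $A_\gam f=\lam f$, i.e.\ $(A-\lam I)f = -\gam\la f,e\ra e$. Since $\lam\notin\Spec(A)$, the operator $(A-\lam I)^{-1}$ exists as a bounded operator, so I may write
\[
f = -\gam\la f,e\ra (A-\lam I)^{-1}e.
\]
I need to rule out $\la f,e\ra=0$: if this vanished then the equation forces $f=0$, contradicting the assumption that $f$ is an eigenvector (note $\gam=0$ would give $Af=\lam f$, forcing $\lam\in\Spec(A)$, again a contradiction). Taking the inner product of both sides with $e$ and dividing by the nonzero scalar $\la f,e\ra$ yields $1+\gam\la(A-\lam I)^{-1}e,e\ra = 0$, which is \eqref{gamlamrel}.

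For the converse direction, assume \eqref{gamlamrel} holds and define the candidate eigenvector $f := (A-\lam I)^{-1}e$, which is well-defined and nonzero since $e\ne 0$ and the resolvent exists. Then $(A-\lam I)f=e$, so
\[
A_\gam f = Af+\gam\la f,e\ra e = \lam f + e + \gam m(\lam) e = \lam f + \bigl(1+\gam m(\lam)\bigr)e = \lam f,
\]
exhibiting $\lam$ as an eigenvalue of $A_\gam$.

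For the final analyticity assertion, I would recall the standard fact that $\lam\mapsto (A-\lam I)^{-1}$ is a norm-analytic $\cL(\cH)$-valued function on $\C\backslash\Spec(A)$ (this follows from the Neumann series expansion about any $\lam_0\notin\Spec(A)$), so the scalar function $m(\lam)=\la(A-\lam I)^{-1}e,e\ra$ is analytic on $\C\backslash\Spec(A)$. Wherever $m(\lam)\neq 0$, solving \eqref{gamlamrel} for $\gam$ gives $\gam(\lam)=-1/m(\lam)$, which is then analytic as the reciprocal of a nonvanishing analytic function. There is no real obstacle here; the only point requiring care is the elementary verification that $\la f,e\ra\neq 0$ in the forward direction, which is handled by the short argument above.
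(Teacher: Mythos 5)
Your proof is correct. It differs in route from the paper, which does not argue directly: the paper derives Proposition~\ref{lemmaB} as the rank-one special case of the Birman--Schwinger lemma (Lemma~\ref{BS}), whose proof rests on the operator factorization $A+\gam CD-\lam I=(I+\gam CD(A-\lam I)^{-1})(A-\lam I)$ together with the fact that $XY$ and $YX$ have the same non-zero eigenvalues; it also offers the finite-dimensional alternative via the relative determinant identity \eqref{rank1det} of Lemma~\ref{relativedet}. Your argument is essentially that general mechanism unwound by hand in the rank-one case: you work directly with the eigenvalue equation, and in the converse direction you exhibit the eigenvector explicitly as $f=(A-\lam I)^{-1}e$, which the paper's route leaves implicit. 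What the paper's approach buys is uniformity (the same lemma covers perturbations of arbitrary finite rank, which is needed elsewhere in the paper); what yours buys is a self-contained, elementary verification valid for possibly unbounded $A$, with the only delicate point --- that $\la f,e\ra\neq 0$ for an eigenvector --- handled correctly. Your qualification that $\gam(\lam)=-1/m(\lam)$ is analytic only where $m(\lam)\neq 0$ is also appropriate, since $m$ can vanish on $\R\backslash\Spec(A)$ even though Proposition~\ref{theoremC} guarantees it is non-zero off the real axis.
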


This is a special case of Lemma~\ref{BS}. In finite dimensions one may alternatively use the formula
\begin{eqnarray}
\frac{\det(A+\gam B-\lam I)}{\det(A-\lam I)}&=&\det((I+\gam (A-\lam I)^{-1}B)^\nat)\nonumber\\
&=& 1+\gam\la (A-\lam I)^{-1}e,e\ra . \label{rank1det}
\end{eqnarray}
See Lemma~\ref{relativedet}.

\begin{proposition}\label{theoremC}
The function $m(\lam)$ defined for all $\lam\in \C\backslash \Spec(A)$ by
(\ref{mfunction}) is a Herglotz function in the sense that $m(\lam)\in \C_\pm$ for all $\lam\in \C_\pm$. Moreover $|m(x+iy)|<1/|y|$ for all $x\in\R$
and $y\not= 0$. If $A$ is bounded then $m(\lam)\not=0$
for all $\lam\in\C$ such that $|\lam|>\norm A \norm$. It follows that
\[
\gam(\lam)=\lam+\la Ae,e\ra +O(|\lam|^{-1})
\]
as $|\lam|\to\infty$.
\end{proposition}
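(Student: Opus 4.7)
The plan is to represent $m$ via the spectral theorem and then read off all four assertions from the resulting integral formula. Let $P$ denote the resolution of the identity of $A$ and define the Borel measure $\mu$ on $\R$ by $\mu(E)=\la P(E)e,e\ra$; since $\norm e\norm=1$, $\mu$ is a probability measure. The functional calculus yields the representation
\begin{equation*}
m(\lam)=\int_\R \frac{1}{s-\lam}\, \mu(\rmd s)
\end{equation*}
valid for every $\lam\in\C\backslash \Spec(A)$, and everything that follows is a short calculation from this.

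For the Herglotz property I would write $\lam=x+iy$ and rationalize the integrand as $((s-x)+iy)/((s-x)^2+y^2)$; the imaginary part of $m(\lam)$ is then $y$ times the integral of a strictly positive kernel against a probability measure, so it has the same sign as $y$. For the absolute value, the triangle inequality together with $|s-\lam|\geq|y|$ and $\mu(\R)=1$ immediately gives $|m(x+iy)|\leq 1/|y|$; the inequality is strict since cyclicity of $e$ forces $\mu$ to have support of size at least two in the non-trivial case, so the pointwise strict inequality $|s-\lam|>|y|$ holds on a set of positive measure.

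For the non-vanishing statement, assume $A$ is bounded and $|\lam|>\norm A\norm$. If $\Im(\lam)\neq 0$, then the Herglotz property just proved places $m(\lam)$ in $\C_+$ or $\C_-$, so it is non-zero. If $\lam\in\R$ with $|\lam|>\norm A\norm$ then $A-\lam I$ is a definite self-adjoint operator (strictly negative when $\lam>\norm A\norm$, strictly positive when $\lam<-\norm A\norm$), so $(A-\lam I)^{-1}$ is definite and $m(\lam)=\la(A-\lam I)^{-1}e,e\ra$ cannot vanish because $\norm e\norm=1$.

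For the asymptotic, I would expand the resolvent by the Neumann series $(A-\lam I)^{-1}=-\sum_{n\geq 0}\lam^{-n-1}A^n$, which converges in operator norm for $|\lam|>\norm A\norm$, to obtain
\begin{equation*}
m(\lam)=-\lam^{-1}-\lam^{-2}\la Ae,e\ra+O(|\lam|^{-3})
\end{equation*}
and then substitute into $\gam(\lam)=-1/m(\lam)$ from Proposition~\ref{lemmaB}, reading off the leading terms via a geometric series in $\lam^{-1}\la Ae,e\ra$. No step should present a serious obstacle: the whole proposition is a catalogue of standard consequences of the integral representation of a scalar Herglotz function together with the Neumann expansion of the resolvent.
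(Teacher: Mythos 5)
The paper states Proposition~\ref{theoremC} without proof (of the four context-setting propositions, only Proposition~\ref{theoremD} is proved; Proposition~\ref{theoremA} is attributed to the literature and Proposition~\ref{lemmaB} to Lemma~\ref{BS}), so there is no argument in the paper to compare against. Your route --- the scalar spectral measure $\mu(E)=\la P(E)e,e\ra$, the representation $m(\lam)=\int_\R(s-\lam)^{-1}\mu(\rmd s)$, and then direct estimates plus the Neumann series --- is the standard one and is essentially correct. Your handling of the strict inequality is the right level of care: $|m(x+iy)|\leq 1/|y|$ is immediate, and strictness needs exactly the observation that $|s-\lam|>|y|$ off the single point $s=x$, together with $\mu(\{x\})<1$; since $e$ is cyclic, $\cH\cong L^2(\R,\mu)$, so $\mu$ is not a single atom unless $\dim\cH=1$, and in that degenerate case the strict inequality as literally stated does fail at $x=\la Ae,e\ra$. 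That is a defect of the paper's statement, not of your argument.

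One substantive point: you stop just before the final computation, and you should carry it out, because it reveals a sign discrepancy with the statement. From your (correct) expansion $m(\lam)=-\lam^{-1}-\lam^{-2}\la Ae,e\ra+O(|\lam|^{-3})$ one gets
\[
\gam(\lam)=-\frac{1}{m(\lam)}=\frac{\lam}{1+\lam^{-1}\la Ae,e\ra+O(|\lam|^{-2})}=\lam-\la Ae,e\ra+O(|\lam|^{-1}),
\]
whereas the proposition asserts $\lam+\la Ae,e\ra+O(|\lam|^{-1})$. The minus sign is confirmed by the $1\times 1$ case $A=a$, $e=1$, where $A_\gam=a+\gam$ and hence $\gam=\lam-a$ exactly. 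So the paper's ``$+$'' is a typo; your proof, once the last step is written out, proves the corrected statement. Everything else (the Herglotz property via $\Im\bigl((s-\lam)^{-1}\bigr)=y/((s-x)^2+y^2)$, and the non-vanishing via definiteness of $A-\lam I$ for real $\lam$ outside $[-\norm A\norm,\norm A\norm]$ combined with the Herglotz property off the real axis) is fine.
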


\begin{proposition}\label{theoremD}
Suppose that $\gam_0,\, \lam_0\in \C_+$ satisfy
(\ref{gamlamrel}) and that $\lam_0$ has algebraic multiplicity $1$
as an eigenvalue of $A_{\gam_0}$. Then there exists an  analytic
function $\lam$ of $\gam$ defined for all $\gam$ in some
neighbourhood of $\gam_0$ such that $\gam$ and $\lam$ satisfy
(\ref{gamlamrel}). Moreover $\lam^\pr(\gam)\not= 0$ in this
neighbourhood.
\end{proposition}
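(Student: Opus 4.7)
The plan is to apply the holomorphic implicit function theorem to the jointly analytic function $F(\gam,\lam):=1+\gam m(\lam)$ on $\C\times(\C\setminus\Spec(A))$. By Proposition~\ref{lemmaB} we have $F(\gam_0,\lam_0)=0$, and the theorem will deliver the required analytic branch $\lam(\gam)$ in a neighbourhood of $\gam_0$ provided $\frac{\partial F}{\partial\lam}(\gam_0,\lam_0)=\gam_0\,m^\pr(\lam_0)\not=0$. Since $\gam_0\in\C_+$ is non-zero, the whole task reduces to showing $m^\pr(\lam_0)\not=0$, and this is the main step.

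To establish this I would invoke the Sherman--Morrison resolvent identity
\[
(A_{\gam_0}-zI)\inv = (A-zI)\inv - \frac{\gam_0\,(A-zI)\inv e\,\la\cdot,\,(A-\bar z I)\inv e\ra}{F(\gam_0,z)},
\]
valid in a punctured neighbourhood of $\lam_0$. The term $(A-zI)\inv$ is analytic across $\lam_0$ (since $\lam_0\notin\Spec(A)$), so it contributes nothing to the Riesz spectral projection. Self-adjointness of $A$ gives $((A-\bar z I)\inv)^\ast=(A-zI)\inv$, whence
\[
\la(A-zI)\inv e,\,(A-\bar z I)\inv e\ra = \la(A-zI)^{-2}e,e\ra = m^\pr(z).
\]
Integrating the rank-one correction term against $-\frac{1}{2\pi i}\,\rmd z$ around a small positively oriented circle $C$ enclosing only $\lam_0$, and taking the trace inside the integral, yields
\[
\tr\!\left(-\frac{1}{2\pi i}\oint_C (A_{\gam_0}-zI)\inv\,\rmd z\right) = \frac{1}{2\pi i}\oint_C\frac{\gam_0\,m^\pr(z)}{F(\gam_0,z)}\,\rmd z = \frac{1}{2\pi i}\oint_C\frac{\partial_z F(\gam_0,z)}{F(\gam_0,z)}\,\rmd z.
\]
The left side equals the algebraic multiplicity of $\lam_0$, which is $1$ by hypothesis; the right side, by the argument principle, equals the order of the zero of $F(\gam_0,\cdot)$ at $\lam_0$. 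Hence this zero is simple, and $\gam_0\,m^\pr(\lam_0)\not=0$.

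With this non-degeneracy in hand, the holomorphic implicit function theorem produces an analytic $\lam(\gam)$ on some neighbourhood $U$ of $\gam_0$ with $\lam(\gam_0)=\lam_0$ and $F(\gam,\lam(\gam))=0$ on $U$. Differentiating the last identity implicitly gives
\[
\lam^\pr(\gam) = -\frac{m(\lam(\gam))}{\gam\,m^\pr(\lam(\gam))} = \frac{1}{\gam^2\,m^\pr(\lam(\gam))},
\]
the second equality using $m(\lam(\gam))=-1/\gam$ from (\ref{gamlamrel}). Shrinking $U$ if necessary so that $m^\pr(\lam(\gam))\not=0$ throughout, we conclude $\lam^\pr(\gam)\not=0$ on $U$. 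The main obstacle is the middle step -- the trace and argument-principle computation that identifies the algebraic multiplicity with the vanishing order of $F(\gam_0,\cdot)$; once that identification is established, the remainder is a routine application of the implicit function theorem.
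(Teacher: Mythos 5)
Your proposal is correct, but it takes a genuinely different route from the paper. The paper simply cites standard analytic perturbation theory (Kato) for the existence of the analytic branch $\lam(\gam)$, and then obtains $\lam^\pr(\gam)\not=0$ by differentiating \eqref{gamlamrel} with respect to $\gam$ to get $m(\lam)+\gam\lam^\pr(\gam)m^\pr(\lam)=0$ and invoking the Herglotz property of Proposition~\ref{theoremC}: since $m(\lam)\in\C_+$ is non-zero, the product $\gam\lam^\pr(\gam)m^\pr(\lam)$ is non-zero, so $\lam^\pr(\gam)\not=0$ --- note that this extracts the non-vanishing of $\lam^\pr$ without ever needing to know in advance that $m^\pr(\lam_0)\not=0$. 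You instead build the branch from scratch via the holomorphic implicit function theorem, which forces you to verify the non-degeneracy $\gam_0 m^\pr(\lam_0)\not=0$ up front; your Sherman--Morrison/Riesz-projection/argument-principle computation identifying the algebraic multiplicity of $\lam_0$ with the order of the zero of $1+\gam_0 m(\cdot)$ does this correctly (the circle can be taken to avoid $\Spec(A)\subset\R$ since $\lam_0\in\C_+$, and the rank-one correction is trace class, so the trace manipulation is legitimate even for unbounded $A$). What your approach buys is a self-contained proof that does not lean on Kato, plus the sharper fact --- not recorded in the paper --- that the multiplicity-one hypothesis is \emph{equivalent} to $m^\pr(\lam_0)\not=0$; what it costs is the extra lemma, whereas the paper's argument needs only $m(\lam)\not=0$. (Incidentally, the displayed identity in the paper's proof has a sign slip --- the second term should carry a plus --- and your version of the implicit differentiation, $\lam^\pr=1/(\gam^2 m^\pr(\lam))$, is the correct one.)
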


\begin{proof}
The first statement of the proposition is a standard fact from perturbation theory for the eigenvalues of operators that depend analytically on a parameter. Given this, we differentiate (\ref{gamlamrel}) with respect to $\gam$ to obtain
\[
\la (A-\lam I)^{-1}e,e\ra-\gam \lam^\pr(\gam) \la (A-\lam
I)^{-2}e,e\ra=0.
\]
Assuming that the neighbourhood is small enough, $\gam\notin\R$ and
$\la (A-\lam I)^{-1}e,e\ra\not= 0$ by Pro[position~\ref{theoremC}. This
implies that $\lam^\pr(\gam)\not= 0$.

\end{proof}

In the rest of this section we assume that  $N=\dim(\cH)<\infty$, that
$e\in\cH$ has norm one and is a cyclic vector for $A$, and that $\Im(\gam)\geq 0$. Our goal is to understand how the eigenvalues of $A_\gam$ depend on $\gam$ for very small and very large $\gam$, and the mapping properties from the one asymptotic regime to the other. We start with the case in which $\gam$ is real and positive.

\begin{lemma}
Under the assumptions of the last paragraph, let $\lam_1,\ldots,
\lam_N$ be the eigenvalues of $A$ written in increasing order and
let $\del_1,\ldots,\del_{N-1}$ be the eigenvalues of the truncation $A^\nat$ of $A$ to $\cK=\{  f :\la  f ,e\ra =0\}$. Then
\[
\lam_1<\del_1<\lam_2<\ldots <\del_{N-1}<\lam_N.
\]
If one assumes that $\gam\geq 0$ then  the eigenvalues
$\lam_{r,\gam}$ of $A_\gam$ are all strictly increasing analytic
functions of $\gam$ satisfying $\lam_{r,0}=\lam_r$. Moreover
$\lim_{\gam\to +\infty}\lam_{r,\gam}=\del_r$ for $1\leq r\leq N-1$
and $\lim_{\gam\to +\infty}\lam_{N,\gam}=+\infty$.
\end{lemma}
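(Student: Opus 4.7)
My plan is to break the lemma into its three components: the strict interlacing, the analytic monotonicity in $\gam$, and the two limits as $\gam\to+\infty$.

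For the interlacing, I would first use cyclicity to conclude that $A$ has simple spectrum: expanding $e=\sum_{r=1}^N c_r\phi_r$ in an orthonormal eigenbasis of $A$, cyclicity forces every $c_r\not=0$, and if some $\lam_r$ had multiplicity greater than one we could rotate its eigenspace to make one $c_r$ vanish. Then the Herglotz function of Proposition~\ref{theoremC} takes the explicit form
\[
m(\lam)=\sum_{r=1}^N \frac{|c_r|^2}{\lam_r-\lam},\qquad m^\pr(\lam)=\sum_{r=1}^N \frac{|c_r|^2}{(\lam_r-\lam)^2}>0 .
\]
Thus on each gap $(\lam_r,\lam_{r+1})$ the function $m$ increases strictly from $-\infty$ to $+\infty$ and has exactly one real zero. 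A one-line Schur-complement (or Cramer) computation in the orthogonal decomposition $\cH=\C e\oplus\cK$ shows
\[
m(\lam)=\frac{\det(A^\nat-\lam I^\nat)}{\det(A-\lam I)},
\]
so the $N-1$ real zeros of $m$ outside $\Spec(A)$ are exactly the eigenvalues $\del_1,\ldots,\del_{N-1}$ of $A^\nat$, yielding the strict interlacing $\lam_1<\del_1<\lam_2<\cdots<\del_{N-1}<\lam_N$.

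Next, the analytic monotonicity statement $\lam_{r,\gam}^\pr>0$ and $\lam_{r,0}=\lam_r$ is exactly Proposition~\ref{theoremA}, applied on any interval of real $\gam$ on which the eigenvalue remains isolated; since we have just shown the eigenvalues are simple at $\gam=0$ and by Proposition~\ref{theoremA} they stay simple and strictly monotone in $\gam\in\R$, they never collide and each $\lam_{r,\gam}$ extends analytically to all $\gam\geq 0$.

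Finally, I would obtain the two limits by combining monotonicity with the characterising equation $m(\lam_{r,\gam})=-1/\gam$ from Proposition~\ref{lemmaB}. By Cauchy interlacing applied to the rank-one positive perturbation $A_\gam-A=\gam B\geq 0$, we have $\lam_r\leq \lam_{r,\gam}\leq \lam_{r+1}$ for $1\leq r\leq N-1$; monotonicity then forces $\lam_{r,\gam}$ to converge to some $L_r\in[\lam_r,\lam_{r+1}]$, and continuity of $m$ on this interval (together with $-1/\gam\to 0^-$) gives $m(L_r)=0$, whence $L_r=\del_r$ by the uniqueness of the zero in that gap. For the remaining eigenvalue I use the trace identity
\[
\sum_{r=1}^N \lam_{r,\gam}=\tr(A_\gam)=\tr(A)+\gam,
\]
so $\lam_{N,\gam}=\tr(A)+\gam-\sum_{r=1}^{N-1}\lam_{r,\gam}\to+\infty$ as $\gam\to+\infty$ since the last sum stays bounded. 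The only place requiring any care is the strictness of interlacing, which is where cyclicity is genuinely used; all other steps are routine invocations of the propositions already established.
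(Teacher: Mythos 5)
Your argument is correct and complete, but it follows a genuinely different route from the paper's. The paper disposes of the lemma in one sentence, appealing to Proposition~\ref{theoremA} together with the variational (min--max) formula for the eigenvalues of the self-adjoint operators $A_\gam$, $\gam\geq 0$: min--max over subspaces of $\cK=e^\perp$ gives $\lam_r\leq\lam_{r,\gam}\leq \del_r\leq\lam_{r+1}$ directly, and monotone convergence then identifies the limits. You instead work with the secular equation $1+\gam m(\lam)=0$ and the explicit partial-fraction form $m(\lam)=\sum_r|c_r|^2/(\lam_r-\lam)=\det(A^\nat-\lam I^\nat)/\det(A-\lam I)$, which is the identity \eqref{pgrA}--\eqref{rank1det} already in the paper. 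Your route has the advantage of delivering the \emph{strict} interlacing for free (cyclicity gives $c_r\neq 0$, hence the poles of $m$ do not cancel and each gap contains exactly one zero $\del_r$), whereas the bare variational argument only yields non-strict inequalities and strictness must still be extracted from cyclicity; it also identifies $\lim_{\gam\to+\infty}\lam_{r,\gam}=\del_r$ in one stroke from $m(L_r)=\lim(-1/\gam)=0$. The variational route is shorter and avoids any discussion of the $m$-function. Two small points worth tightening in your write-up: when you pass to the limit $m(L_r)=0$ you should note that $L_r$ cannot equal $\lam_{r+1}$ (indeed $m(\lam_{r,\gam})=-1/\gam<0$ forces $\lam_{r,\gam}\in(\lam_r,\del_r)$, so $L_r\leq\del_r<\lam_{r+1}$ and continuity of $m$ at $L_r$ is legitimate); and in the trace identity you are implicitly using $\tr(B)=\norm e\norm^2=1$, which deserves a word. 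Neither affects correctness.
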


\begin{proof}
This uses Proposition~\ref{theoremA} and the variational formula for the eigenvalues of $A_\gam$.
\end{proof}

We now turn to the study of the case $\gam\in\C_+$.

\begin{theorem}\label{fibration}
Given $\theta \in (0,\pi)$, define
\begin{equation}
S_\theta=\bigcup_{t>0} \Spec(A_{t\rme^{i\theta}}).\label{SthetaA}
\end{equation}
Then
\begin{equation}
S_\theta\cap S_\phi=\emptyset\mbox{ if }\theta\not= \phi\label{SthetaB}
\end{equation}
and
\begin{equation}
\bigcup_{\theta\in (0,\pi)}S_\theta=\C_+.\label{SthetaC}
\end{equation}
Moreover the limit set of each $S_\theta$ in $\C_+\cup \{\infty\}$ is
$\Spec(A)\cup\Spec(A^\nat)\cup\{\infty\}$.
\end{theorem}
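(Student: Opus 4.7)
The plan is to exploit the relation from Proposition~\ref{lemmaB}: for every $\lam \in \C\setminus\Spec(A)$, the coupling $\gam$ for which $\lam$ is an eigenvalue of $A_\gam$ is uniquely determined as $\gam(\lam) = -1/m(\lam)$. Combined with the Herglotz property from Proposition~\ref{theoremC}, which gives $m(\C_+) \subseteq \C_+$, this yields a well-defined map $\gam : \C_+ \to \C_+$ (using that $\Im(-1/z) > 0$ whenever $\Im(z) > 0$) that organizes the first two claims, while Theorem~\ref{maintheorem} organizes the third.

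For the disjointness claim (\ref{SthetaB}): if $\lam \in S_\theta \cap S_\phi$, then $\lam \in \C_+$ by Theorem~\ref{SBimpliesC+}, so $\lam \notin \Spec(A)$ and the unique coupling $\gam(\lam)$ must equal both $t_1 \rme^{i\theta}$ and $t_2 \rme^{i\phi}$, forcing $\theta = \phi$. For the covering claim (\ref{SthetaC}): given any $\lam \in \C_+$, I would compute $\gam(\lam) \in \C_+$, write it as $t\rme^{i\theta}$ with $t > 0$ and $\theta \in (0,\pi)$, and then observe via (\ref{gamlamrel}) that $\lam \in \Spec(A_{\gam(\lam)}) \subseteq S_\theta$.

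For the limit-set statement, I plan to apply Theorem~\ref{maintheorem} to the curve $g(t) = t\rme^{i\theta}$, noting that $M = \rank B = 1$. First I would verify the hypotheses: (H1) is immediate, (H2) is the standing cyclicity assumption of the section, (H3) follows because a cyclic vector for a self-adjoint operator forces simple spectrum (otherwise a vector in the eigenspace orthogonal to $e$ would be orthogonal to all of $\lin\{A^n e\}$), (H4) is trivial since $B$ has the single non-zero eigenvalue $1$, and (H5) follows from the strict interlacing $\lam_1 < \del_1 < \cdots < \del_{N-1} < \lam_N$ recorded in the preceding lemma. Theorem~\ref{maintheorem} then exhibits $S_\theta$ as the union of the images of $N$ continuous analytic curves $\lam_r : (0,\infty) \to \C_+$ whose limits as $t \to 0^+$ enumerate $\Spec(A)$ and whose limits as $t \to \infty$ enumerate $\Spec(A^\nat) \cup \{\infty\}$, with exactly one curve escaping to $\infty$ (the one satisfying $\lam_{\pi(1)}(t)/g(t) \to 1$).

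The one point requiring a brief additional argument is the exclusion of any extra accumulation of $S_\theta$ in the relevant compactification beyond these prescribed endpoint limits. This should follow from the fact that $S_\theta$ is a finite union of continuous proper parametrizations on the locally compact interval $(0,\infty)$ with specified two-sided endpoint values, so every accumulation point in the compactified upper half-plane must coincide with one of the $2N$ boundary values, namely $\Spec(A) \cup \Spec(A^\nat) \cup \{\infty\}$.
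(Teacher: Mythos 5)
Your proposal is correct and follows essentially the same route as the paper: the unique determination of $\gam=-1/m(\lam)$ from \eqref{gamlamrel} together with the Herglotz property gives \eqref{SthetaB} and \eqref{SthetaC}, and the limit set is read off from the $t\to 0$ and $t\to\infty$ asymptotics of Theorem~\ref{maintheorem}. Your version merely spells out the hypothesis checks (H2)--(H5) and the endpoint/accumulation argument that the paper leaves implicit.
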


\begin{proof}
If $\lam\in\C_+$ then (\ref{gamlamrel}) determines $\gam=t\rme^{i\theta}$ uniquely. This fact implies (\ref{SthetaB}) and (\ref{SthetaC}). The limit set of $S_\theta$ is the union of $\lim_{t\to 0}\Spec(A_{t\rme^{i\theta}})$ and $\lim_{t\to +\infty}\Spec(A_{t\rme^{i\theta}})$, both of which were determined in Theorem~\ref{maintheorem}.
\end{proof}

We now turn to the structure of the individual sets $S_\theta$. Let $\del_1,\ldots,\del_{N-1}$ denote the eigenvalues of $A^\nat$, written in increasing order and let $\del_N=\infty$. As before we say that a curve $\sig:(0,\infty)\to \C_+$ is simple and analytic if it is a one-one, real analytic mapping and $\sig^\pr(t)$ is non-zero for all $t\in (0,\infty)$.

\begin{theorem}\label{main2}
There exists a finite increasing set $T\subset (0,\pi)$ such that if $\theta\in (0,\pi)\backslash T$ then $S_\theta$ is the union of $N$ disjoint simple analytic curves. Each curve starts at some $\lam_r\in\Spec(A)$ and ends at some $\del_{\tau(r)}$ where $\tau$ is a permutation of $\{1,2,\ldots,N\}$. This permutation is constant in each subinterval $J$ of  $(0,\pi)\backslash T$, but it may change from one interval to another.
\end{theorem}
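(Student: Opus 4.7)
The plan is to reduce everything to Theorem~\ref{maintheorem} applied to the straight rays $g_\theta(t)=t\rme^{i\theta}$, with $T$ defined as the finite set of arguments at which such a ray would pass through a point where the hypotheses of that theorem break down.

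First I would isolate the exceptional set. Let $F=F_1\cup F_2\subset S_B$, where $F_1$, $F_2$ are the finite sets from Lemma~\ref{useH2} and Lemma~\ref{useH3}, and put
\[
T=\{\arg(\gam):\gam\in F\}\cap(0,\pi),
\]
which is finite and can be ordered increasingly. For $\theta\in(0,\pi)\setminus T$ the ray $g_\theta$ lies in $S_B\setminus F$ for every $t>0$, so $g_\theta\in\cG_0$. Next I would check that (H1)--(H5) hold in the present rank one cyclic setting: (H1) is the standing hypothesis; cyclicity of $e$ combined with Corollary~\ref{cycliccor} gives (H2); cyclicity of $e$ for the self-adjoint $A$ forces simple spectrum, which is (H3); (H4) is vacuous since the only non-zero eigenvalue of $B$ is $1$; and (H5) is the strict interlacing $\del_1<\cdots<\del_{N-1}$ recorded in the lemma immediately above.

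Theorem~\ref{maintheorem} applied to $g_\theta$ (with $M=1$) then yields $N$ real-analytic curves on $[0,\infty)$ whose values at time $t$ exhaust $\Spec(A_{g_\theta(t)})$, starting at the eigenvalues of $A$, with exactly one curve diverging in the sense $\lam(t)/g_\theta(t)\to 1$ and the other $N-1$ converging to $\del_1,\ldots,\del_{N-1}$ in some order; one reads off a permutation $\tau$, reserving $\del_N=\infty$ for the diverging curve. Simplicity and pairwise disjointness of the traces on $(0,\infty)$ are now automatic from Proposition~\ref{lemmaB}: if $\lam_r(t_1)=\lam_s(t_2)$ with $t_1,t_2>0$, then the common eigenvalue determines $\gam$ uniquely, so $t_1\rme^{i\theta}=t_2\rme^{i\theta}$, hence $t_1=t_2$; and because $g_\theta(t_1)\notin F_1$, Lemma~\ref{useH2} forces $r=s$. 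Non-vanishing of $\lam_r^\pr$ on $(0,\infty)$ is already part of the proof of Theorem~\ref{maintheorem}, so $S_\theta$ is indeed the disjoint union of $N$ simple real-analytic curves with the prescribed endpoints.

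Finally, $\tau$ is constant on each connected component $J$ of $(0,\pi)\setminus T$ by the monodromy remark immediately after Theorem~\ref{maintheorem}: for $\theta,\phi\in J$ the wedge-shaped homotopy $g_s(t)=t\rme^{i((1-s)\theta+s\phi)}$ stays inside $\cG_0$ and avoids $F$ throughout, so both rays yield the same permutation; across a point of $T$ the homotopy is forced to cross a branch point of $\cR$ and $\tau$ may legitimately jump, as will be illustrated in Examples~\ref{rank1example} and~\ref{Neq5example}. The main difficulty in the entire argument has already been absorbed into Theorem~\ref{maintheorem}; what remains here is really the clean reduction of (H2)--(H5) in the rank one cyclic case, together with the observation that Theorem~\ref{fibration} and Proposition~\ref{lemmaB} automatically upgrade the parametrization to a disjoint union of simple curves.
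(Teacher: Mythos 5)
Your proposal is correct and follows essentially the same route as the paper: define $T$ from the arguments of the finite exceptional set $F=F_1\cup F_2$, apply Theorem~\ref{maintheorem} to the rays $g_\theta(t)=t\rme^{i\theta}\in\cG_0$, deduce disjointness and simplicity from the uniqueness of $\gam$ for a given $\lam$ in Proposition~\ref{lemmaB} (equivalently Theorem~\ref{fibration}), obtain constancy of $\tau$ on each component of $(0,\pi)\backslash T$ by continuity/homotopy, and defer the possibility of a jump to Examples~\ref{rank1example} and \ref{Neq5example}. Your explicit verification of (H2)--(H5) in the rank one cyclic case is a useful detail the paper leaves implicit, but it does not change the argument.
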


\begin{proof}
This a corollary of Theorems~\ref{maintheorem} and \ref{fibration}, but some of the calculations are simpler because the polynomial $p(\gam,\lam)$ defined in (\ref{pgl}) has the following explicit form. By expanding the determinant using an orthonormal basis whose first term is $e$ one obtains
\begin{eqnarray}
p(\gam,\lam)&=&\det(A-\lam I)+\gam \det(A^\nat -\lam I^\nat)\label{pgrA}\\
&=& p_0(\lam) +\gam p_1(\lam),  \label{pgrB}
\end{eqnarray}
where ${}^\nat$ denotes the truncation to $\cK=\{\phi :\la \phi,e\ra=0\}$, $p_0$ is a polynomial with degree $N$ and $p_1$ is a polynomial with degree $N-1$. The formula (\ref{pgrB}) can also be derived from (\ref{gamlamrel}). Let $F$ be the finite exceptional set defined just before Theorem~\ref{maintheorem}. It follows from (\ref{SthetaB}) and (\ref{SthetaC}) that there is a finite set $T\subset (0,\pi)$ such that $\theta\in T$ if and only if $t\rme^{i\theta}\in F$ for some $t>0$. If $\theta\in (0,\pi)\backslash T$ then the curve $g(t)=t\rme^{i\theta}$ lies in $\cG_0$, as defined just before Theorem~\ref{maintheorem}, which yields most of the statements of this theorem. $\theta \notin T$ implies that the curves are simple and non-intersecting because every $\lam\in \C_+$ is associated with only one $\gam=t\rme^{i\theta}$ and hence with only one value of $t\in (0,\infty)$ by (\ref{gamlamrel}). The constancy of the permutation on each subinterval $J$ follow from the continuous dependence of the curves in $S_\theta$ on $\theta$. To prove the last statement, it is sufficient to consider the following example.
\end{proof}

\begin{example}\label{rank1example}
Let $\cH=\C^2$ and let
\[
A_\gam=\mtrx{ 1+\gam \alp^2& \gam \alp\bet\\
\gam\alp\bet&-1+\gam\bet^2}
\]
where $\gam\in\C$, $\alp>0$, $\bet>0$ and $\alp^2+\bet^2=1$, so that
$e=(\alp,\bet)$ has norm one and is a cyclic vector for
$A=A_0$. The eigenvalues of $A_\gam$ are
\[
\lam_{\pm,\gam}=\frac{\gam}{2}%
\pm\sqrt{  \frac{\gam^2}{4}+1-\gam(\bet^2-\alp^2)  }.
\]
Figure~\ref{rank1fig} plots these eigenvalues for $\alp=\sqrt{3}/2$, $\bet =1/2$, $\gam=t\rme^{i \theta}$ and $0\leq t\leq 4$. The two dashed curves correspond to the choice $\theta= 119^\circ$; the two intersecting continuous curves correspond to the choice $\theta= 120^\circ$; the two dotted curves correspond to the choice $\theta= 121^\circ$. Note that the only critical point is $\gam_{\rm c}=\{-1+ i\sqrt{3}\}$, so $T=\{120^\circ\}$. The corresponding eigenvalue of $A_{\gam_{\rm c}}$ is $\lam_{\rm c}=\{-1/2+ i\sqrt{3}/2\}$, which has algebraic multiplicity $2$ but geometric multiplicity $1$, by a direct computation or Theorem~\ref{SBimpliesC+}. It is clear that the permutation $\tau$ of the set $\{1,2\}$ defined in Theorem~\ref{main2} is different for $\theta< 120^\circ$ and for $\theta> 120^\circ$ and that there is no natural way of defining such a permutation for $\theta = 120^\circ$.
\end{example}

\begin{figure}[h!]
\begin{center}
\draft{\resizebox{15cm}{!}{\mbox{\rotatebox{0}%
{\includegraphics{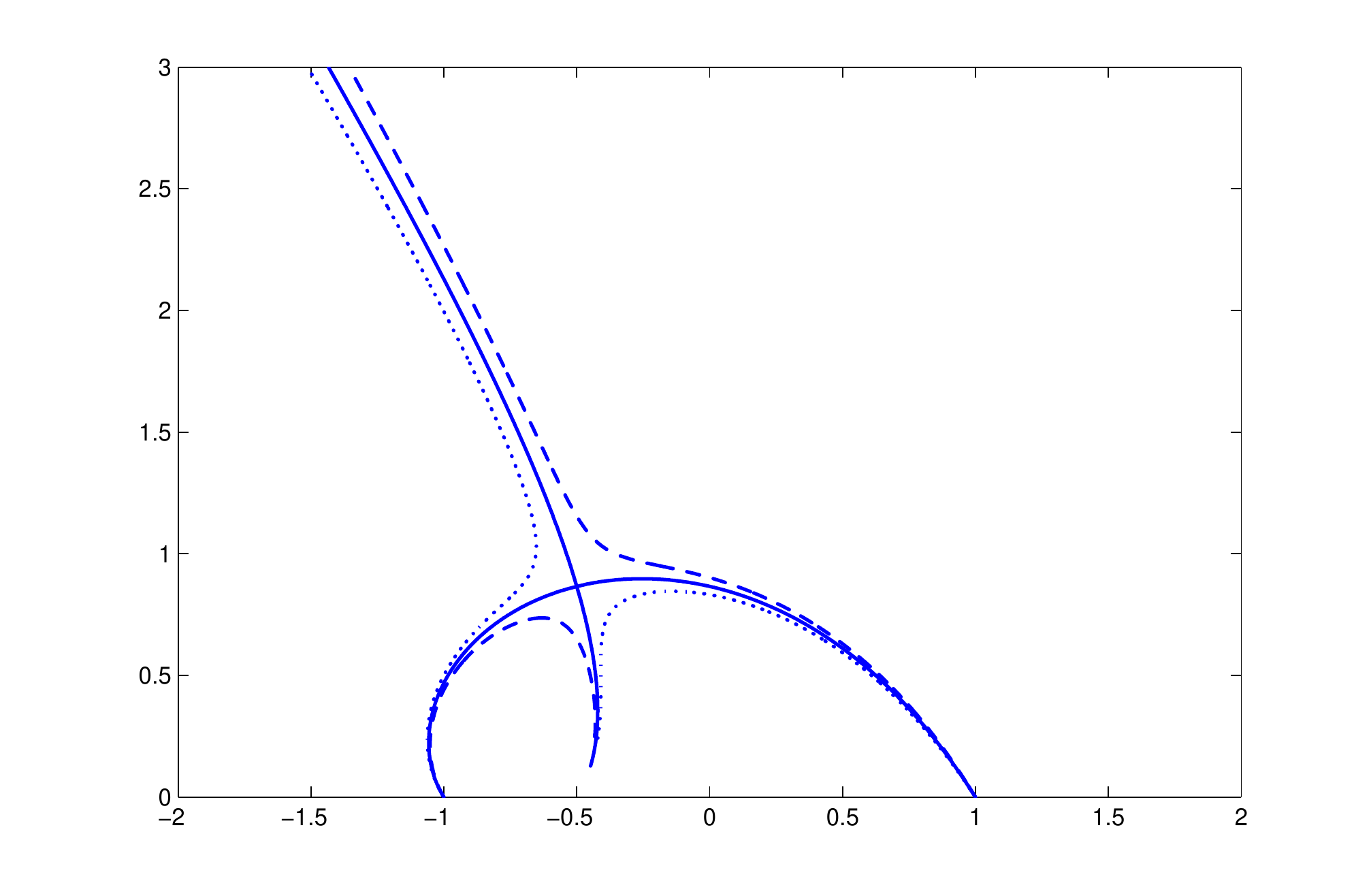}}}}}
\end{center}
\caption{Spectral curves described in %
Example~\ref{rank1example}}\label{rank1fig}
\end{figure}

\begin{example}\label{Neq5example}
Let $A$ be the $5\times 5$ diagonal matrix with eigenvalues
$\lam_r=r$, $r=1,2,3,4,5$, and let $e=(1,1,1,1,1)/\sqrt{5}$. Then
the limits of the eigenvalues of $A_\gam$ as $|\gam|\to\infty$ are
given numerically by $\mu_1= 1.35556$, $\mu_2=2.45608$,
$\mu_3=3.54390$, $\mu_4=4.64442$ and $\mu_5=\infty$. For each $\theta$ exactly one of the five eigenvalue curves diverges to $\infty$. The table below lists the permutations $\tau$ associated with each angle $\theta\in(0^\circ,180^\circ)$ that is a multiple of $10^\circ$.

\[
\begin{array}{cccccc}
\theta&\tau(1)&\tau(2)&\tau(3)&\tau(4)&\tau(5)\\
0^\circ &1&2&3&4&5\\
10^\circ &1&2&3&4&5\\
20^\circ &1&2&3&4&5\\
30^\circ &1&2&3&4&5\\
40^\circ &1&2&3&4&5\\
50^\circ &1&2&3&4&5\\
60^\circ &1&2&3&4&5\\
70^\circ &1&2&3&5&4\\
80^\circ &1&2&3&5&4\\
90^\circ &1&2&5&3&4\\
\end{array}
\]
The first change in $\tau$ occurs for $\theta_1\in
(61^\circ,62^\circ)$, while the second occurs for $\theta_2\in
(81^\circ,82^\circ)$.
\end{example}

\section{The limit $N\to\infty$}\label{dimdep}
The previous analysis clarifies to some extent how the spectra of a family of $N\times N$ matrices $A_\gam=A+\gam B$ depend on $\gam$ for very small and very large $\gam$. However, it does not capture the full range of phenomena that can occur for $\gam$ of intermediate sizes. Even if one is interested in a particular fairly large value of $N$, one often obtains further insights by considering a family of $N\times N$ matrices $A_{N,\gam}=A_N+\gam B_N$. From this point of view the case $N =\infty$ is regarded as an idealization that may be simpler to analyze than the original problem. Results such as Proposition~\ref{intapprox} may then be used to estimate the difference between the two cases.

We assume throughout that $B_Nf=\la f,e_N\ra e_N$ for all $f\in\C^N$ where $e_N\in\C^N$ is a unit vector. The set of eigenvalues of $A_{N,\gam}$ is obtained by solving $1+\gam m_N(\lam)=0$, or equivalently $\gam=-1/m_N(\lam)$, where $m_N$ are the Herglotz functions
\[
m_N(\lam)=\la (A_N-\lam I_N)^{-1}e_N,e_N\ra.
\]
See Propositions~\ref{lemmaB} and \ref{theoremC}.

\begin{theorem}\label{Ntoinftytheorem}
Let
\[
\mu_N(\gam)=\max\left\{ \Im(\lam_{r,N,\gam}):1\leq r\leq N\right\}
\]
where $\gam\in \C_+$ and $\{\lam_{r,N,\gam}\}_{r=1}^N$ are the eigenvalues of $A_{N,\gam}$ repeated according to their algebraic multiplicities. Then
\begin{equation}
\mu_N(\gam)\geq \Im(\gam)/N\label{NtoinfinityLB}
\end{equation}
for all $N\geq 1$ and $\gam\in\C_+$. Suppose that $\norm A_N\norm\leq c$ for all $N$ and that $m_N$ converge to $m_\infty$ locally uniformly on $\C_+$ as $N\to\infty$. If $\gam\in\C_+$ and $-1/\gam\notin\Ran(m_\infty)$ then
\[
\lim_{N\to\infty} \mu_N(\gam)=0.
\]
\end{theorem}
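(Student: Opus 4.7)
The plan is to handle the two assertions of the theorem separately; the lower bound is a one-line trace computation, while the vanishing limit requires a compactness/Herglotz continuation argument.

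For the lower bound (\ref{NtoinfinityLB}), I would start from the trace identity
\[
\sum_{r=1}^N \lam_{r,N,\gam}=\tr(A_{N,\gam})=\tr(A_N)+\gam\,\tr(B_N).
\]
Because $B_N f=\la f,e_N\ra e_N$ with $\norm e_N\norm=1$, we have $\tr(B_N)=1$; and $\tr(A_N)\in\R$ because $A_N$ is self-adjoint. Taking imaginary parts yields $\sum_{r=1}^N \Im(\lam_{r,N,\gam})=\Im(\gam)$, and the maximum of $N$ real numbers summing to $\Im(\gam)$ is at least $\Im(\gam)/N$. This gives (\ref{NtoinfinityLB}) without any use of the hypothesis on $\norm A_N\norm$ or of the convergence of $m_N$.

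For the limit statement I would argue by contradiction. Suppose $\mu_N(\gam)\not\to 0$; then there exist $\eps>0$, a subsequence $N_k\to\infty$, and eigenvalues $\lam_{N_k}$ of $A_{N_k,\gam}$ with $\Im(\lam_{N_k})\geq\eps$. Since $\norm A_{N_k}\norm\leq c$ and $\norm B_{N_k}\norm=1$, every eigenvalue of $A_{N_k,\gam}$ lies in the disc $|\lam|\leq c+|\gam|$, so after passing to a further subsequence we may assume $\lam_{N_k}\to\lam_\infty$ for some $\lam_\infty$ with $\Im(\lam_\infty)\geq\eps>0$, in particular $\lam_\infty\in\C_+$. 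By the rank one Birman--Schwinger relation (Proposition~\ref{lemmaB}) each eigenvalue $\lam$ of $A_{N,\gam}$ with $\lam\notin\Spec(A_N)$ satisfies $1+\gam m_N(\lam)=0$, i.e.\ $m_N(\lam)=-1/\gam$. One easily rules out the exceptional case $\lam_{N_k}\in\Spec(A_{N_k})\subset\R$ since $\Im(\lam_{N_k})\geq\eps>0$. Hence $m_{N_k}(\lam_{N_k})=-1/\gam$ for every $k$.

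The final step is to pass to the limit in this identity. Fix a closed disc $K\subset\C_+$ of small radius centred at $\lam_\infty$; for $k$ large enough $\lam_{N_k}\in K$, and the locally uniform convergence $m_{N_k}\to m_\infty$ on $K$ combined with $\lam_{N_k}\to\lam_\infty$ gives
\[
-1/\gam=\lim_{k\to\infty} m_{N_k}(\lam_{N_k})=m_\infty(\lam_\infty)\in\Ran(m_\infty),
\]
contradicting $-1/\gam\notin\Ran(m_\infty)$. Hence $\mu_N(\gam)\to 0$. The only slightly delicate point is verifying that the limit point $\lam_\infty$ genuinely lies in the open domain $\C_+$ on which locally uniform convergence is assumed; this is precisely where the standing assumption $\Im(\lam_{N_k})\geq\eps$ is used, and it is the step I would expect to be the technical heart of the argument.
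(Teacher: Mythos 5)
Your proposal is correct and follows essentially the same route as the paper: the trace identity $\Im\bigl(\sum_r\lam_{r,N,\gam}\bigr)=\Im(\tr(A_N+\gam B_N))=\Im(\gam)$ for the lower bound, and for the limit a contradiction argument extracting a convergent subsequence of eigenvalues in $\{\Im(\lam)\geq\eps,\ |\lam|\leq c+|\gam|\}$ and passing to the limit in $m_{N_k}(\lam_{N_k})=-1/\gam$ via the locally uniform convergence. Your explicit remark that eigenvalues in $\Spec(A_{N_k})\subset\R$ are excluded by $\Im(\lam_{N_k})\geq\eps$ is a small point the paper leaves implicit.
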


\begin{proof}
We note that $m_\infty$ is a Herglotz function, unless it is a real constant. Each function $m_N:\C_+\to \C_+$ is surjective because $m_N(\lam)=-1/\gam$ has $N$ solutions $\lam\in\C_+$ counting multiplicities, namely the eigenvalues of $A_{N,\gam}$. We show in Example~\ref{N2inftyex} that $m_\infty$ need not be surjective. The lower bound (\ref{NtoinfinityLB})
follows from
\[
N\mu_N(\gam)\geq \Im\left(\sum_{r=1}^N\lam_{r,N,\gam}\right)=\Im\left(\tr(A_N +\gam B_N )\right) =\Im(\gam).
\]
If $\gam\in\C_+$ then every eigenvalue $\lam_{r,N,\gam}$ of $A_{N,\gam}$ satisfies
\begin{equation}
|\lam_{r,N,\gam}|\leq \norm A_N\norm +|\gam|\, \norm B_N\norm \leq c+|\gam|.\label{ABbound}\
\end{equation}
Therefore $0<\mu_N(\gam)\leq c+|\gam|$. If $\mu_N(\gam)$ does not converge to $0$ as $N\to\infty$ then there exists a subsequence $N(s)$ and a constant $c_2>0$ such that
$\mu_{N(s)}(\gam)\geq c_2$ for all $s$; and then a subsequence $r(s)$ such that $\Im(\lam_{r(s),N(s),\gam})\geq c_2$ for all $s$. Combining this with (\ref{ABbound}), there exist subsubsequences, which we again parametrize using $s$, and $\lam\in\C_+$ such that
$\lim_{s\to\infty}\lam_{r(s),N(s),\gam}=\lam$ where $\Im(\lam)\geq c_2$ and $|\lam |\leq c+|\gam|$. Since $m_{N(s)}(\lam_{r(s),N(s),\gam})=-1/\gam$ for all $s$, the local uniform convergence of $m_N$ to $m_\infty$ implies that $m_\infty(\lam)=-1/\gam$.
\end{proof}

Theorem~\ref{Ntoinftytheorem} depends on the assumption that $m_N$ converges to $m_\infty$ as $N\to\infty$. The following proposition allows one to estimate the difference between $m_N(\lam)$ and $m_\infty(\lam)$ for problems of the above type by putting
\[
k(s)=\frac{|g(s)|^2}{s-\lam},
\]
where the choice of $g$ depends on the problem. It may be seen that the bound on the difference is $O(\Im(\lam)^{-2})$ as $\Im(\lam)\to 0$.

\begin{proposition}\label{intapprox}
Let $k$ be a continuous function on $[a,b]$ with bounded first derivative and let $N$ be a positive integer. Then
\begin{equation}
\frac{b-a}{N}\sum_{r=1}^N k(a+r(b-a)/N)=\int_a^b k(s)\, \rmd s+\mathrm{rem}\label{sumapprox}
\end{equation}
where
\[
|\mathrm{rem}|\leq \frac{(b-a)^2}{2N}\norm k^\pr \norm_\infty.
\]
\end{proposition}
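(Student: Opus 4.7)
The plan is to establish this as a standard right-endpoint Riemann sum error estimate, interval by interval.

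First I would set $h = (b-a)/N$ and $s_r = a + rh$ for $0 \leq r \leq N$, partitioning $[a,b]$ into $N$ subintervals of equal length $h$. The left-hand side of (\ref{sumapprox}) is then $\sum_{r=1}^N h\, k(s_r)$, while the integral splits as $\int_a^b k(s)\, ds = \sum_{r=1}^N \int_{s_{r-1}}^{s_r} k(s)\, ds$. Thus the remainder decomposes as
\[
\mathrm{rem} = \sum_{r=1}^N \int_{s_{r-1}}^{s_r} \bigl(k(s_r) - k(s)\bigr)\, ds.
\]

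Next I would bound each term of this sum. Since $k$ has bounded first derivative, the fundamental theorem of calculus (or the mean value theorem) gives $|k(s_r) - k(s)| \leq \|k'\|_\infty (s_r - s)$ for $s \in [s_{r-1}, s_r]$. Hence
\[
\left| \int_{s_{r-1}}^{s_r} \bigl(k(s_r) - k(s)\bigr)\, ds \right| \leq \|k'\|_\infty \int_{s_{r-1}}^{s_r} (s_r - s)\, ds = \frac{h^2}{2}\|k'\|_\infty .
\]

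Finally I would sum over $r$ from $1$ to $N$, producing $|\mathrm{rem}| \leq N \cdot \tfrac{h^2}{2}\|k'\|_\infty = \tfrac{(b-a)^2}{2N}\|k'\|_\infty$, which is the asserted bound. There is no real obstacle here; the argument is entirely routine calculus, and the only minor point worth noting is that the hypothesis ``bounded first derivative'' is precisely what is needed to convert pointwise increments of $k$ into the factor $\|k'\|_\infty$ times the displacement, which in turn produces the $1/N$ decay after summing $N$ contributions of size $h^2$.
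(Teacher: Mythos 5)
Your proof is correct and follows essentially the same route as the paper: both decompose the error subinterval by subinterval and use the bound $\norm k^\pr\norm_\infty$ on an integral of a linear weight of total mass $h^2/2$ per subinterval (the paper phrases this via integration by parts with weight $s-s_{r-1}$, you via the mean value theorem with weight $s_r-s$, which is an immaterial difference). No gaps.
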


\begin{proof}
The left hand side of (\ref{sumapprox}) is the sum of $N$ terms of the form
\begin{eqnarray*}
\del k(c_r)&=& \int_{c_r-\del}^{c_r}\frac{\rmd}{\rmd s}[(s-c_r+\del)k(s)]\, \rmd s\\
&=& \int_{c_r-\del}^{c_r}k(s)\, \rmd s+C_r
\end{eqnarray*}
where $\del=(b-a)/N$, $c_r=a+r\del$ and
\begin{eqnarray*}
|C_r|&=&\left| \int_{c_r-\del}^{c_r}(s-c_r+\del)k^\pr(s))\, \rmd s\right|\\
&\leq & \norm k^\pr\norm_\infty \int_{c_r-\del}^{c_r}|s-c_r+\del |\, \rmd s\\
&=& \frac{\del^2}{2} \norm k^\pr\norm_\infty.
\end{eqnarray*}
Summing over $r$ one obtains
\[
|\mathrm{rem}|\leq \frac{N\del^2}{2} \norm k^\pr\norm_\infty %
=\frac{(b-a)^2}{2N} \norm k^\pr\norm_\infty.
\]
\end{proof}

\begin{example}\label{N2inftyex}
Let $A_N$ be the $N\times N$ diagonal matrix with entries $A_{N,n,n}=n/N$ for all $n$ and let $B_N$ the rank one matrix associated with the unit vector $e_{N,n}=N^{-1/2}$ for all $n$. Then $\Spec(A_N)\subset [0,1]$ and
\[
m_N(\lam)=\frac{1}{N} \sum_{r=1}^N \frac{1}{n/N-\lam}.
\]
It may be seen that $m_N$ converges locally uniformly to
\begin{eq}
m_\infty(\lam)=\int_0^1 \frac{\rmd s}{s-\lam}
= \log\left(\frac{\lam -1}{\lam}\right)
\end{eq}
as $N\to\infty$. It follows that the range of $m_\infty$ is $\{z\in\C :0<\Im(z)<\pi\}$ and the set of $\mu\in\C_+$ such that $m_\infty(\lam)=-1/\mu$ has no solution is the closed disc
\[
D= \{ \gam :\left|\gam-i/(2\pi)\right|\leq 1/(2\pi)\} .
\]

Figure~\ref{Ntoinftyfig} provides a contour plot $\mu_N(\gam)$ for $N=100$, the contours corresponding to the values $0.01,\, 0.02,\, 0.03,\, 0.04,\, 0.05$ of $\mu_N(\gam)$. The circle $\partial D$ is included for comparison.
\end{example}

\begin{figure}[htb!]
\begin{center}
\draft{\resizebox{15cm}{!}{\mbox{\rotatebox{0}%
{\includegraphics{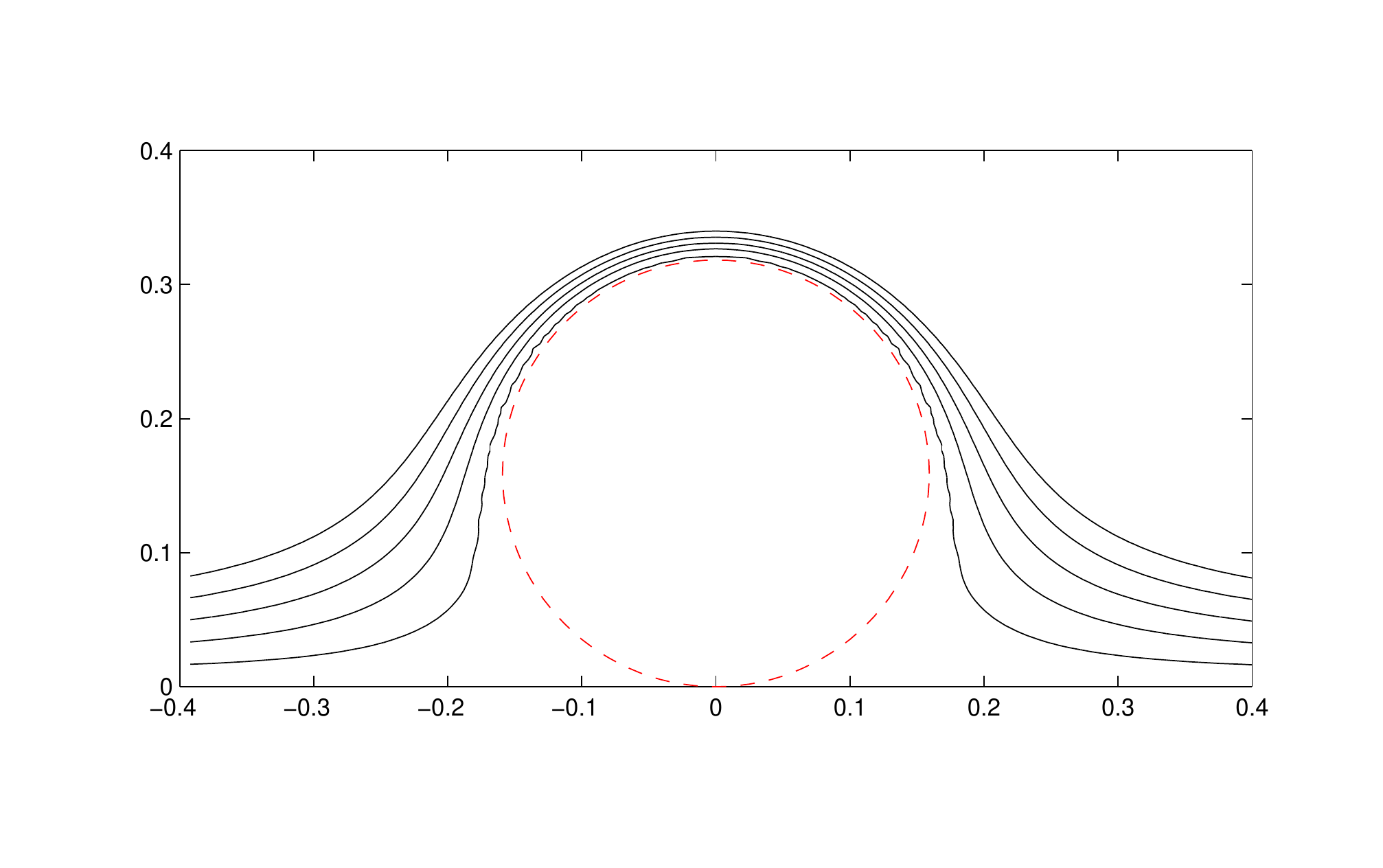}}}}}
\end{center}
\caption{Contour plot of $\mu_N(\gam)$ for $N=100$.}\label{Ntoinftyfig}
\end{figure}

\begin{example}\label{generalg}
Let $A_N$ be the $N\times N$ diagonal matrix with entries $A_{N,n,n}=n/N$ for all $n$ and let $B_N$ the rank one matrix associated with the unit vector $e_{N,n}=N^{-1/2}g(n/N)$ where $g\in L^2(0,1)$ and $g$ is sufficiently regular. Then $\Spec(A_N)\subset [0,1]$ and $m_N$ converges locally uniformly to
\begin{eq}
m_\infty(\lam)=\int_0^1 \frac{f(s)\rmd s}{s-\lam},\label{generalgeq}
\end{eq}
as $N\to\infty$, where $f(s)=|g(s)|^2$. The integral in (\ref{generalgeq}) is well-defined for every $\lam\in\C_+$ because $f\in L^1(0,1)$, but the form of the range of $m_\infty$ depends on whether either or both of the integrals
\[
\int_0^1 \frac{f(s)}{s}\,\rmd s, \hspace{2em} \int_0^1 \frac{f(s)}{1-s}\,\rmd s
\]
is finite. In Example~\ref{N2inftyex} both integrals are infinite. More generally the first integral diverges if and only if $A+\gam B$ has a negative eigenvalue for all real negative $\gam$, while the second integral diverges if and only if $A+\gam B$ has a positive eigenvalue for all real positive $\gam$

The range of $m_\infty$ is the union of the sets $m_\infty(S_{\eps,r})$, where
\[
S_{\eps,r}= \left\{ \lam: \Im(\lam)\geq\eps \mbox{ and } |\lam|\leq r    \right\}.
\]
These increase monotonically as $\eps>0$ decreases to $0$ and as $r$ increases to $\infty$. The boundary $\partial S_{\eps,r}$ may be parametrized as a simple closed curve $\gam_{\eps,r}$. A standard theorem in complex analysis states that each set
$m_\infty(S_{\eps,r})$ is the union of the range of the closed curve $\sig_{\eps,r}=m_\infty\circ \gam_{\eps,r}$ and the set of all $z$ not in this range whose winding number with respect to $\sig_{\eps,r}$ is non-zero.

The observations above allow one to compute the range of $m_\infty$ approximately by taking $\eps>0$ small enough and $r$ large enough. This is particularly easy if one can write $m_\infty$ in closed form. If $g(s)=s^{1/2}$ for all $s\in [0,1]$ then
\[
m_\infty(\lam)=\int_0^1 \frac{s}{s-\lam}\, \rmd s=1+\lam\log\left( \frac{\lam-1}{\lam}\right).
\]
Figure~\ref{generalgfig} was obtained by putting $\eps=10^{-8}$. The set of $\gam$ for which $-1/\gam=m_\infty(\lam)$ is not soluble is the part of $\C_+$ that is inside the closed curve plotted. This curve starts at $-1$ and ends at $0$. The gap observed near $0$ is a numerical artifact that arises because the convergence to $0$ is logarithmic.
\end{example}

\begin{figure}[h!]
\begin{center}
\draft{\resizebox{15cm}{!}{\mbox{\rotatebox{0}
{\includegraphics{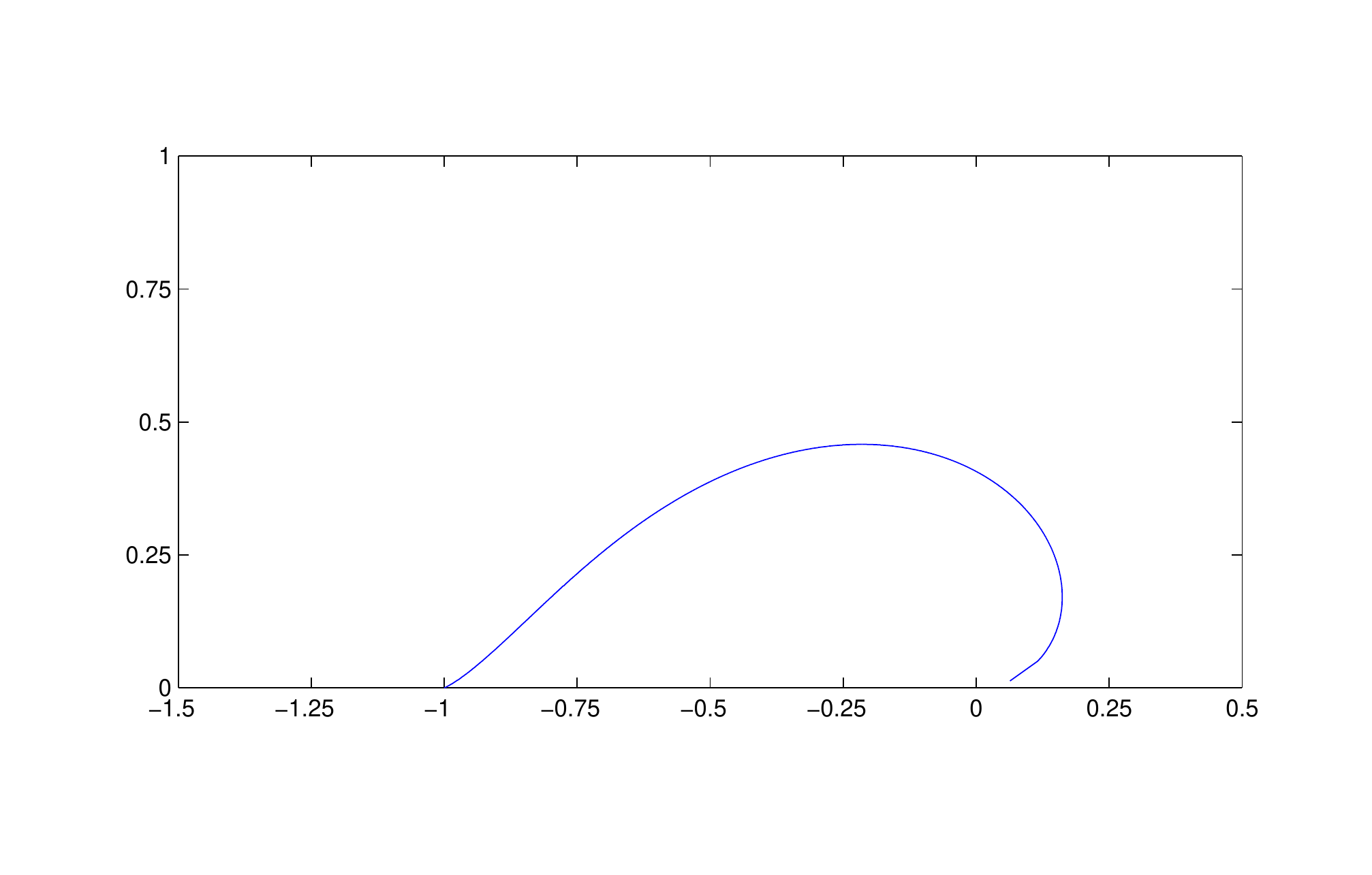}}}}}
\end{center}
\caption{Boundary curve in Example~\ref{generalg}}\label{generalgfig}
\end{figure}

\begin{theorem}\label{mrange}
Suppose that
\begin{equation}
m(\lam)=\int_\R \frac{f(s)}{s-\lam}\, \rmd s\label{mintdef}
\end{equation}
for all $\lam\in \C_+$, where $f(s)\geq 0$ for all $s\in\R$,
\begin{equation}
\int_\R \frac{f(s)}{1+|s|}\, \rmd s< \infty\label{mfinitecond}
\end{equation}
and $0<\norm f\norm_\infty <\infty$.
Then $m(\C_+)$ is contained in
\begin{equation}
\{ z:0<\Im(z)<\pi\norm f\norm_\infty\}.\label{mrangeA}
\end{equation}
Hence $-1/\gam= m(\lam)$ is not soluble for any $\gam$ in the closed disc
\begin{equation}
\left\{ \gam :\left|\gam-\frac{i}{2\pi\norm f\norm_\infty}\right|\right\}\leq \frac{1}{2\pi\norm f\norm_\infty}\, .\label{mrangeB}
\end{equation}
\end{theorem}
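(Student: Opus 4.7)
The plan is to compute $\Im(m(\lam))$ explicitly as a Poisson integral of $f$, obtain the strip bound $0 < \Im(m(\lam)) < \pi \norm f\norm_\infty$ from properties of the Poisson kernel, and then translate this strip into the disc (\ref{mrangeB}) via the Möbius map $\gam\mapsto -1/\gam$.

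First I would check that the integral (\ref{mintdef}) defines a holomorphic function on $\C_+$: for $\lam=x+iy$ with $y>0$, split the integration at $|s|=2(|x|+y)$; on the bounded piece use $|s-\lam|\geq y$ together with $f\in L^\infty$, and on the unbounded piece use $|s-\lam|\geq |s|/2$ together with the hypothesis (\ref{mfinitecond}). Then for such $\lam$ write
\[
\Im(m(\lam))=\int_\R \frac{y\, f(s)}{(s-x)^2+y^2}\,\rmd s
=\pi\int_\R P_y(s-x)\, f(s)\,\rmd s,
\]
where $P_y(u)=y/(\pi(u^2+y^2))$ is the Poisson kernel on $\C_+$, satisfying $\int_\R P_y(u)\, \rmd u=1$. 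Since $P_y>0$, $f\geq 0$ and $\norm f\norm_\infty >0$, the integral is strictly positive. For the upper bound, $\Im(m(\lam))\leq\pi\norm f\norm_\infty$ follows from $f\leq\norm f\norm_\infty$; equality at some $\lam\in\C_+$ would force $f=\norm f\norm_\infty$ almost everywhere with respect to the Poisson measure, hence almost everywhere with respect to Lebesgue measure, which contradicts (\ref{mfinitecond}). This establishes (\ref{mrangeA}).

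Next I would handle (\ref{mrangeB}) by analyzing the Möbius map $\phi(\gam)=-1/\gam$. Setting $c=1/(2\pi\norm f\norm_\infty)$, the disc $D=\{\gam:|\gam-ic|\leq c\}$ has boundary parametrized by $\gam=ic(1+\rme^{i\theta})$ for $\theta\in(-\pi,\pi)$. A direct computation using $1+\rme^{i\theta}=2\cos(\theta/2)\rme^{i\theta/2}$ yields
\[
-\frac{1}{\gam}=\frac{1}{2c}\bigl(\tan(\theta/2)+i\bigr),
\]
so $\phi(\partial D\setminus\{0\})$ is exactly the horizontal line $\{z:\Im(z)=1/(2c)=\pi\norm f\norm_\infty\}$. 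Checking the center, $\phi(ic)=i/c=2i\pi\norm f\norm_\infty$, shows that $\phi$ maps the interior of $D$ into $\{z:\Im(z)>\pi\norm f\norm_\infty\}$. Consequently $\phi(D\setminus\{0\})=\{z:\Im(z)\geq\pi\norm f\norm_\infty\}$, which by (\ref{mrangeA}) is disjoint from $m(\C_+)$. The point $\gam=0$ is handled trivially, since $-1/\gam=m(\lam)$ then has no solution.

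The only non-routine step is the strict inequality $\Im(m(\lam))<\pi\norm f\norm_\infty$; everything else is bookkeeping with the Poisson kernel and a linear fractional transformation. The strictness is what makes the disc in (\ref{mrangeB}) closed rather than merely open, and it is here that the integrability hypothesis (\ref{mfinitecond}) plays its (otherwise seemingly cosmetic) role by ruling out the extremal case $f\equiv\norm f\norm_\infty$.
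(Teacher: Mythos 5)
Your proof is correct and follows essentially the same route as the paper's: both reduce everything to the Poisson-integral formula for $\Im(m(\lam))$ and a direct estimate, with \eqref{mrangeB} following by mapping the strip under $\gam\mapsto-1/\gam$. You supply two details the paper leaves implicit — the strictness of the upper bound $\Im(m(\lam))<\pi\norm f\norm_\infty$ (via the observation that equality would force $f\equiv\norm f\norm_\infty$ a.e., contradicting \eqref{mfinitecond}) and the explicit M\"obius computation identifying the closed disc with the closed half-plane $\{\Im(z)\geq\pi\norm f\norm_\infty\}$ — and both are handled correctly.
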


\begin{proof}
The condition (\ref{mfinitecond}) ensures that the integral (\ref{mintdef}) defining $m(\lam)$ converges for all $\lam\in\C_+$ and defines an analytic function of $\lam$. The condition $f(s)\geq 0$ and $0<\norm f\norm_\infty$ ensures that the range of $m$ is contained in $\C_+$.

We next observe that if $\lam=u+iv$ where $u\in\R$ and $v>0$ then
\[
\Im(m(\lam))=\int_\R \frac{v}{(u-s)^2+v^2} f(s)\, \rmd s.
\]
A direct estimate of this integral yields (\ref{mrangeA}), and (\ref{mrangeB}) follows.
\end{proof}

\clearpage

\textbf{Acknowledgements} The author thanks F Gezstesy, A Pushnitski and Y Safarov for valuable comments and contributions.

Dept. of Mathematics,\\
King's College London,\\
Strand,\\
London,WC2R 2LS,\\
UK

\end{document}